\newcommand{\MZV}{\mathsf{MZV}}
\newcommand{\CMZV}{\mathsf{CMZV}}
\newcommand{\MtV}{\mathsf{MtV}}
\newcommand{\AMtV}{\mathsf{AMtV}}
\newcommand{\sha}{\shuffle}
\newcommand{\si}{\sigma}
\newcommand{\ola}{\overleftarrow}
\newcommand{\ora}{\overrightarrow}
\newcommand\ta{{\texttt{a}}}
\newcommand\ty{{\texttt{y}}}
\newcommand\tx{{\texttt{x}}}
\newcommand\tz{{\texttt{z}}}
\newcommand\gl{{\lambda}}
\newcommand\ga{{\alpha}}
\newcommand\om{{\omega}}
\newcommand\eps{{\varepsilon}}
\newcommand{\bfga}{{\boldsymbol{\sl{\alpha}}}}
\newcommand{\bfk}{{\boldsymbol{\sl{k}}}}
\newcommand{\bfl}{{\boldsymbol{\sl{l}}}}
\newcommand{\bfm}{{\boldsymbol{\sl{m}}}}
\newcommand{\bfs}{{\boldsymbol{\sl{s}}}}
\newcommand{\bfr}{{\boldsymbol{\sl{r}}}}
\newcommand{\bfz}{{\boldsymbol{\sl{z}}}}
\newcommand\bfeps{{\boldsymbol \varepsilon}}
\def\int{\displaystyle\!int}
\def\lim{\displaystyle\!lim}
\def\sum{\displaystyle\!sum}
\def\sup{\displaystyle\!sup}
\def\inf{\displaystyle\!inf}
\def\cap{\displaystyle\!cap}
\def\max{\displaystyle\!max}
\def\min{\displaystyle\!min}
\def\frac{\displaystyle\!frac}
\let\oldsection\section
\renewcommand\section{\setcounter{equation}{0}\oldsection}
\DeclareMathOperator*{\dep}{dep}
\DeclareMathOperator{\Li}{Li}
\def\N{\mathbb{N}}
\def\Z{\mathbb{Z}}
\def\Q{\mathbb{Q}}
\def\ze{\zeta}
\theoremstyle{plain}
\newtheorem{thm}{Theorem}[section]
\newtheorem{lem}[thm]{Lemma}
\newtheorem{cor}[thm]{Corollary}
\newtheorem{pro}[thm]{Proposition}
\theoremstyle{definition}
\newtheorem{re}[thm]{Remark}
\newtheorem{exa}[thm]{Example}
\newtheorem{qu}[thm]{Question}
\newcommand{\binn}{{\binom{2n}{n}}}
\begin{document}
%%%%%%%%%%%%%%%%%%%% title %%%%%%%%%%%%%%%%%%%%%%%%%%%%%%%%%%%%%%%%%%%%%%%%
\title{\bf Ap\'{e}ry-Type Series and Colored Multiple Zeta Values}
\author{
{Ce Xu${}^{a,}$\thanks{Email: cexu2020@ahnu.edu.cn, ORCID 0000-0002-0059-7420.}\ and Jianqiang Zhao${}^{b,}$\thanks{Email: zhaoj@ihes.fr, ORCID 0000-0003-1407-4230.}}\\[1mm]
\small a. School of Mathematics and Statistics, Anhui Normal University, Wuhu 241002, P.R.C\\
\small b. Department of Mathematics, The Bishop's School, La Jolla, CA 92037, United States of America}

\date{}
\maketitle

\noindent{\bf Abstract.} In this paper, we study new classes of Ap\'{e}ry-type series involving the central binomial coefficients and the multiple $t$-harmonic sums by combining the methods of iterated integrals and Fourier--Legendre series expansions, where the multiple $t$-harmonic sums are a variation of multiple harmonic sums in which all the summation indices are restricted to odd numbers only. Our approach also enables us to generalize some old classes of Ap\'{e}ry-type series involving harmonic sums to those with products of multiple harmonic sums and multiple $t$-harmonic sums. We show that these series can be expressed as either the real or the imaginary part of a $\Q$-linear combination of colored multiple zeta values of level 4. Hopefully, these relations will shed some new lights on their properties which may lead to novel approaches to irrationality questions on their properties and may lead to new approaches to irrationality questions on the Riemmann zeta values, or more generally, the multiple zeta values.

\medskip
\noindent{\bf Keywords}:  Ap\'{e}ry-type series, central binomial coefficients, (colored) multiple zeta values, multiple $t$-values, multiple ($t$-)harmonic sums, Legendre polynomials.

\noindent{\bf AMS Subject Classifications (2020):} 11M32, 11B65, 42C10.

\section{Introduction}
The study of infinite series involving central binomial coefficients was first brought to he attention to the math world by
Ap\'ery with his celebrated proof of irrationality of $\zeta(2)$ and $\zeta(3)$. These series are called Ap\'{e}ry-type series or Ap\'{e}ry-like sums. Although many new identities have been found they lead to no more irrationality proofs for other odd Riemann zeta values.

However, in recent years a large number of research work have been focus on the study of multiple zeta values (MZVs) and their generalizations. Many surprising connections to other objects in mathematics and physics have been discovered. In this paper, we will present many new families of relations between Ap\'{e}ry-type series and the colored multiple zeta values, among which the Riemann zeta values are the special cases. Hopefully, these will shed some new lights to the properties of these series and may lead to new approaches to irrationality questions.

We begin with some basic notations. Let $\N$ be the set of positive integers and $\N_0:=\N\cup \{0\}$.
A finite sequence $\bfk:=(k_1,\ldots, k_r)\in\N^r$ is called a \emph{composition}. We put
\begin{equation*}
 |\bfk|:=k_1+\cdots+k_r,\quad \dep(\bfk):=r,
\end{equation*}
and call them the weight and the depth of $\bfk$, respectively. If $k_1>1$, $\bfk$ is called \emph{admissible}.

For a composition $\bfk=(k_1,\ldots,k_r)$ and a positive integer $n$, the \emph{multiple harmonic sum} and \emph{multiple harmonic star sum} are defined by
\begin{align}
\ze_n(\bfk):=\sum\limits_{n\geq n_1>\cdots>n_r>0 } \frac{1}{n_1^{k_1}\cdots n_r^{k_r}}\quad
\text{and}\quad
\ze^\star_n(\bfk):=\sum\limits_{n\geq n_1\geq\cdots\geq n_r>0} \frac{1}{n_1^{k_1}\cdots n_r^{k_r}}\label{MHSs+MHSSs},
\end{align}
respectively. If $n<k$ then ${\ze_n}(\bfk):=0$ and ${\ze_n}(\emptyset )={\ze^\star _n}(\emptyset ):=1$. When taking the limit $n\rightarrow \infty$ in \eqref{MHSs+MHSSs} we get the \emph{multiple zeta values} (MZVs) and the \emph{multiple zeta star values} (MZSVs), respectively:
\begin{align*}
{\ze}( \bfk):=\lim_{n\rightarrow \infty}{\ze_n}(\bfk), \quad
\text{and}\quad
{\ze^\star}( \bfk):=\lim_{n\rightarrow \infty}{\ze^\star_n}( \bfk),
\end{align*}
defined for admissible compositions  $\bfk$ to ensure convergence of the series. The systematic study of MZVs began in the early 1990s with the works of Hoffman \cite{H1992} and Zagier \cite{DZ1994}. Due to their surprising and sometimes mysterious appearance in the study of many branches of mathematics and theoretical physics, these special values have attracted a lot of attention and interest in the past three decades (for example, see the book by the
second author  \cite{Zhao2016}).

In general, let $\bfk=(k_1,\ldots,k_r)\in\N^r$ and $\bfz=(z_1,\dotsc,z_r)$ where $z_1,\dotsc,z_r$ are $N$th roots of unity. We can define the \emph{colored MZVs} (CMZVs) of level $N$ as
\begin{equation}\label{equ:defnMPL}
\Li_{\bfk}(\bfz):=\sum_{n_1>\cdots>n_r>0}
\frac{z_1^{n_1}\dots z_r^{n_r}}{n_1^{k_1} \dots n_r^{k_r}},
\end{equation}
which converges if $(k_1,z_1)\ne (1,1)$ (see \cite{Racinet2002} and \cite[Ch. 15]{Zhao2016}), in which case we call $({\bfk};\bfz)$ \emph{admissible}. The level two colored MZVs are often called Euler sums or alternating MZVs. In this case, namely, when $(z_1,\dotsc,z_r)\in\{\pm 1\}^r$ and $(k_1,z_1)\ne (1,1)$, we set
$\ze(\bfk;\bfz)= \Li_\bfk(\bfz)$. Further, we put a bar on top of
$k_{j}$ if $z_{j}=-1$. For example,
\begin{equation*}
\ze(\bar2,3,\bar1,4)=\ze(2,3,1,4;-1,1,-1,1).
\end{equation*}
More generally, for any composition $(k_1,\dotsc,k_r)\in\N^r$, the \emph{classical multiple polylogarithm function} with $r$-variable is defined by
\begin{align}\label{equ:classicalLi}
\Li_{k_1,\dotsc,k_r}(x_1,\dotsc,x_r):=\sum_{n_1>n_2>\cdots>n_r>0} \frac{x_1^{n_1}\dotsm x_r^{n_r}}{n_1^{k_1}\dotsm n_r^{k_r}}
\end{align}
which converges if $|x_1\cdots x_j|<1$ for all $j=1,\dotsc,r$. They can be analytically continued to a multi-valued meromorphic function on $\mathbb{C}^r$ (see \cite{Zhao2007d}). In particular, if $x_1=x,x_2=\cdots=x_r=1$, then $\Li_{k_1,\ldots,k_r}(x,1_{r-1})$ is the classical single-variable multiple polylogarithm function.

Similar to the multiple harmonic sums and the multiple harmonic star sums, for a composition $\bfk=(k_1,\ldots,k_r)$ and a positive integer $n$, we define the \emph{multiple t-harmonic sum} and \emph{multiple t-harmonic star sum} respectively by
\begin{align*}
t_n(\bfk):=\sum_{n\geq n_1>n_2>\cdots>n_r>0} \prod_{j=1}^r \frac{1}{(2n_{j}-1)^{k_{j}}}\quad
\text{and}\quad
t^\star_n(\bfk):=\sum_{n\geq n_1\geq n_2\geq \cdots\geq n_r>0} \prod_{j=1}^r \frac{1}{(2n_{j}-1)^{k_{j}}}.
\end{align*}
We set ${t_n}(\emptyset)={t^\star_n}(\emptyset ):=1$ and ${t_n}(\bfk):=0$ if $n<k$. When taking the limit $n\rightarrow \infty$ we get the so-called the \emph{multiple $t$-values} (MtVs) and the \emph{multiple $t$-star values} (MtSVs), respectively. These values has been studied first by Hoffman who introduced them as the  odd variants of MZVs and MZSVs in \cite{H2019}, respectively. In the above definition of MtVs, we put a bar on the top of $k_j$ for $j=1,2,\ldots r$ if there is a sign $(-1)^{n_j}$ appearing in the numerator on the right. Or we can also use similar notation $t(\bfs;\bfeps)$ where $\bfeps$ is a composition of $\pm 1$'s. Those involving one or more of the $k_j$ barred are called the \emph{alternating multiple $t$-values}. For example,
\begin{equation*}
t(\bar{k}_1,\bar{k}_2,k_3,k_4)=\sum_{n_1>n_2>n_3>n_4\geq1}
    \frac{(-1)^{n_1+n_2}}{(2n_1-1)^{k_1}(2n_2-1)^{k_2}(2n_3-1)^{k_3}(2n_4-1)^{k_4}}\,.
\end{equation*}
Obviously, the alternating MtVs can be expressed in terms of CMZVs of level 4.

We point out that we have defined more generally the \emph{multiple mixed values} for an admissible composition $\bfk=(k_1,\ldots,k_r)$ and $\bfeps=(\varepsilon_1,\ldots,\varepsilon_r)\in\{\pm1\}^r$ (see \cite{XuZhao2020a}) by
\begin{align}\label{defn-mmvs}
M(\bfk;\bfeps)&:=\sum_{n_1>n_2>\cdots>n_r>0} \prod_{j=1}^r \frac{1+\varepsilon_{j}(-1)^{n_{j}}}{n_{j}^{k_{j}}}.
\end{align}
These can be regarded as variants of the multiple zeta value of level two.
Also, as special cases of multiple mixed values, the first author \cite{Xu2021} defined the values
\begin{align}\label{Def-MRV}
R(k_1,k_2,\ldots,k_r):=2^{k_1+\cdots+k_r} \sum_{n_1>\cdots>n_r>0} \frac{1}{(2n_1-1)^{k_1} (2n_{2})^{k_2}\cdots(2n_r)^{k_r}},
\end{align}
for positive integers $k_1,k_2,\ldots,k_r$ with $k_1\geq 2$, which are called \emph{multiple $R$-values}. If $r=1$ and $k_1=k\geq 2$, we have
$$R(k)=2^{k}\sum_{n=1}^\infty \frac{1}{(2n-1)^k}=(2^k-1)\ze(k)\quad (k\geq 2).$$
In fact, the first author showed that (see \cite[Thm. 4.1 and Cor. 4.2]{Xu2021})
\begin{align}\label{Eq-MRV-H1}
{R}(m+1,1_{n-1})\in\Q[\log 2,\ze(2),\ze(3),\ze(4),\ldots],
\end{align}
where $1_d$ is the sequence of 1's with $d$ repetitions.

The primary goal of this paper is to study the explicit relations of Ap\'{e}ry-type series and colored multiple zeta values by using the method of iterated integrals and Fourier--Legendre series expansions. This is partially motivated by the recent work of Campbell and his collaborators, see \cite{Camp18,Camp-Aur-Son19,Can-Aur2019}.
For example, in Thm.~\ref{thm-MR-Apery-AMtVs} we will express the Ap\'{e}ry-type series involving central binomial coefficients
\begin{equation*}
\sum\limits_{n=0}^\infty \bigg[\frac1{4^n}\binn\bigg]^2 \frac1{(2n+1)^{m+1}},\quad  m\in \N_0,
\end{equation*}
explicitly in terms of alternating multiple $t$-values by computing the Fourier--Legendre series expansions of $\log(x)/\sqrt{x}$. As two general results, we show in Cor.~\ref{cor-1stPower} that
for any positive integers $p$, $m$ and $\bfk\in\N^r$, the series involving multiple $t$-harmonic sums
\begin{align*}
\sum_{n=1}^\infty \frac1{4^n}\binn\frac{t_n(\bfk)}{(2n+1)^{m}} \quad
\text{(resp.}\ \sum_{n=1}^\infty  \frac{4^{n}}{\binn} \frac{t_n(\bfk)}{n^{m+1}}), \quad  \ m\in\N,
\end{align*}
can be expressed as the imaginary (resp. real) part of a $\Q$-linear combination of CMZVs of level 4 and weight $|\bfk|+m$ (resp. $|\bfk|+m+1$).
Furthermore, in Thm.~\ref{thm-Apery-MRV} we prove that the Ap\'{e}ry-type series
\begin{equation*}
 \sum\limits_{n=1}^\infty \frac{1}{4^n}\binn \frac{t_n(1_k)}{n^{m+1}},\quad m,k\in\N_0,
\end{equation*}
can be expressed in terms of rational linear combinations of products of $\log 2$ and the Riemann zeta values.
Some related results may be found in \cite{Au2020,Camp18,ChenKW19,KWY2007,X2020} and references therein.

The remainder of this paper is organized as follows. After stating three preliminary lemmas in section \ref{sec-lemmas} we shall consider the Fourier--Legendre series expansion of $\log^m(x)$ (resp. $\log^m(x)/\sqrt{x}$) and obtain an expansion of their higher derivatives in terms the multiple harmonic sums in section \ref{sec-logm} (resp. section \ref{sec-logm-sqrt}). The result of section \ref{sec-logm} will then be applied in section \ref{sec-Apery} to show that some Ap\'{e}ry-type series can be expressed in terms of CMZVs (see Thm.~\ref{thm-ASASs1}) and Prop.~\ref{pro-inCMZV24}). Here the Fourier--Legendre series expansion of the complete elliptic integral of the first kind $K(x)$, defined by \eqref{KEQ1}, plays a crucial role. Next, in a similar vein, the result of section \ref{sec-logm-sqrt} will be applied in sections \ref{sec-Apery-MtV} and \ref{sec-Apery-MtSV} to show that some MtV and MtSV variants of Ap\'{e}ry-type series can be expressed in terms of CMZVs (see Thm.~\ref{thm-Apery-MRV} and Thm.~\ref{thm-Apery-MtV-Mhs-CB}). In sections \ref{sec-SpecialMZSV-MtVproductI} and \ref{sec-SpecialMZSV-MtVproductII}, using the theory of iterated integrals we prove a few results
for more general forms of variation of Ap\`ery-type series involving products of multiple $t$-harmonic sums and multiple harmonic sums, and relate them to CMZVs of level 2 and level 4. In particular, we consider series in which the central binomial coefficients appear both as numerator and denominator. We conclude this paper with some further questions for future research along this direction in the last section. Our numerical computation throughout the paper was done using MAPLE.

\section{Some preliminary lemmas}\label{sec-lemmas}
In this section, we collect three lemmas which will be used throughout the rest of the paper.

\begin{lem}\label{lem1} \emph{(\cite{C1974})} Fa$\grave{a}$ di Bruno's formula may be stated in terms of Bell polynomials as follows:
\begin{align}\label{2.3}
\frac {d^n}{dx^n}f(g(x))=\sum\limits_{k=1}^n f^{(k)}(g(x))B_{n,k}(g^{(1)}(x),g^{(2)}(x),\cdots,g^{(n-k+1)}(x)),\quad n\in\N,
\end{align}
where $B_{n,k}$ is the exponential partial Bell polynomials defined by
\begin{align}\label{equ:Bell}
\frac{1}{k!} \left(\sum\limits_{j=1}^\infty x_j \frac{t^j}{j!} \right)^k=\sum\limits_{n=k}^\infty B_{n,k}(x_1,\ldots,x_{n-k+1}) \frac{t^n}{n!},\quad k=0,1,2,\ldots.
\end{align}
Moreover, we have the following recurrence relation
\begin{align}\label{3}
B_{n,k}(x_1,\ldots,x_{n-k+1}) =\sum\limits_{i=1}^{n-k+1} \binom{n-1}{i-1}x_i B_{n-i,k-1}(x_1,\ldots,x_{n-k-i+2}) ,
\end{align}
with $B_{0,0}(x_1)=1,\ B_{n,0}(x_1,\ldots,x_{n+1})=0\ (n\geq 1),\ B_{0,k}(\cdot)=0\ (k\geq 1)$.
\end{lem}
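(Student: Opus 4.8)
The plan is to derive all three parts of the lemma from the single generating-function definition \eqref{equ:Bell}, working throughout at the level of (formal or convergent) power series in $t$. First I would establish Fa\`a di Bruno's formula \eqref{2.3} by the substitution method. Fix a base point $x_0$ and set $h=x-x_0$; the Taylor expansion of $g$ gives $g(x)-g(x_0)=\sum_{j\geq 1} g^{(j)}(x_0)\,h^j/j!$, and inserting this into $f(g(x))=\sum_{k\geq 0} f^{(k)}(g(x_0))\,(g(x)-g(x_0))^k/k!$ produces, for each $k$, exactly the $k$-th power that appears on the left of \eqref{equ:Bell} with $x_j=g^{(j)}(x_0)$. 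Replacing each such power by its expansion $\sum_{n\geq k} B_{n,k}\,h^n/n!$ and collecting the coefficient of $h^n/n!$ then yields $\frac{d^n}{dx^n}f(g(x))=\sum_{k=0}^n f^{(k)}(g(x))\,B_{n,k}(g^{(1)}(x),\ldots)$, which is \eqref{2.3} once the vanishing $B_{n,0}=0$ for $n\geq 1$ removes the $k=0$ term.

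The initial conditions are read off directly from \eqref{equ:Bell}. Taking $k=0$ gives $\sum_{n\geq 0}B_{n,0}\,t^n/n!=1$, whence $B_{0,0}=1$ and $B_{n,0}=0$ for $n\geq 1$; and since the lowest-order term of the $k$-th power is of order $t^k$, the series on the right must start at $n=k$, so $B_{n,k}=0$ whenever $n<k$, and in particular $B_{0,k}=0$ for $k\geq 1$.

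The recurrence \eqref{3} I would obtain by differentiating \eqref{equ:Bell} in $t$. Writing $\Phi(t)=\sum_{j\geq 1}x_j\,t^j/j!$, the identity reads $\Phi^k/k!=\sum_{n\geq k}B_{n,k}\,t^n/n!$, and differentiating gives $\Phi'\cdot \Phi^{k-1}/(k-1)!$ on the left. Using $\Phi'(t)=\sum_{i\geq 0}x_{i+1}\,t^i/i!$ together with $\Phi^{k-1}/(k-1)!=\sum_{m\geq k-1}B_{m,k-1}\,t^m/m!$, the Cauchy product of these two exponential generating functions has $t^n/n!$-coefficient $\sum_{i=0}^n\binom{n}{i}x_{i+1}B_{n-i,k-1}$. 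Matching this against the coefficient $B_{n+1,k}$ coming from the left side, then shifting $n\mapsto n-1$ and reindexing $i\mapsto i-1$, produces $B_{n,k}=\sum_{i\geq 1}\binom{n-1}{i-1}x_i B_{n-i,k-1}$; the stated upper limit $n-k+1$ is then automatic, because $B_{n-i,k-1}$ vanishes as soon as $i>n-k+1$ by the previous paragraph.

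There is no serious obstacle here, as the result is classical; the only care required is the bookkeeping in the last step, namely keeping the exponential normalizations straight through the Cauchy product and performing the two index shifts so that the binomial factor $\binom{n-1}{i-1}$ and the correctly truncated argument list of $B_{n-i,k-1}$ emerge. If one preferred to avoid generating functions, \eqref{2.3} could instead be proved by induction on $n$, but differentiating $B_{n,k}(g^{(1)}(x),\ldots)$ with respect to $x$ would then invoke precisely the recurrence \eqref{3}, so deriving \eqref{3} first from the generating function is the more economical order in which to carry out the argument.
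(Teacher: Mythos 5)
Your proof is correct. Note that the paper offers no proof of this lemma at all: it is quoted as a known result from Comtet's \emph{Advanced Combinatorics} \cite{C1974}, so there is nothing internal to compare against. Your generating-function derivation --- reading the initial conditions off \eqref{equ:Bell}, obtaining \eqref{3} by differentiating $\Phi^k/k!$ in $t$ and taking the exponential Cauchy product, and getting \eqref{2.3} by substituting the Taylor series of $g$ into that of $f$ --- is precisely the classical treatment found in the cited reference, and your index bookkeeping (the shift $n\mapsto n-1$, $i\mapsto i-1$ yielding $\binom{n-1}{i-1}$, with the upper limit $n-k+1$ enforced by the vanishing $B_{n-i,k-1}=0$ for $i>n-k+1$) checks out; moreover, the analytic-functions setting you work in suffices for the paper's only application, where $f(t)=t^m$ and $g(x)=\log x$.
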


\begin{lem}\label{lem-S-1}   \emph{(\cite[Thm. 4.1]{Xu2017})}
Define two sequences ${A_m(n)}$ and ${B_m(n)}$ by
$$ A_m(n) = (m-1)!\underset{i = 0}{\overset{m - 1}{\sum}}\dfrac{A_i(n)}{i!}\underset{k = 1}{\overset{n}{\sum}}x_k^{m - i},\quad A_0(n) = 1,\quad \left( {{x_k} \in \mathbb{C},k = 1,2, \cdots ,n} \right),$$
$$ B_m(n) = \underset{k_1 = 1}{\overset{n}{\sum}}x_{k_1}\underset{k_2 = 1}{\overset{k_1}{\sum}}x_{k_2}\cdots\underset{k_m = 1}{\overset{k_{m-1}}{\sum}}x_{k_m},\quad B_0(n) = 1, \quad \left( {{x_k} \in \mathbb{C},k = 1,2, \cdots ,n} \right).$$
Then \[A_m(n) = m!B_m(n).\]
\end{lem}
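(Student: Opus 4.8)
The plan is to identify the iterated sum $B_m(n)$ with the complete homogeneous symmetric polynomial of degree $m$ in $x_1,\dots,x_n$, and then to show that both $B_m(n)$ and the normalized sequence $A_m(n)/m!$ satisfy one and the same Newton-type recurrence with the same initial value; the conclusion $A_m(n)=m!B_m(n)$ then follows at once by induction on $m$.

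First I would introduce the power sums $p_j:=\sum_{k=1}^n x_k^j$ together with the generating function
\[
G(t):=\prod_{k=1}^n\frac{1}{1-x_kt}.
\]
Expanding each factor as a geometric series and collecting the coefficient of $t^m$, one sees that this coefficient equals the sum of all monomials of degree $m$ in $x_1,\dots,x_n$, each taken once. Matching this against the description of $B_m(n)$ as a sum of $x_{k_1}\cdots x_{k_m}$ over weakly decreasing index strings $n\ge k_1\ge\cdots\ge k_m\ge 1$ (which are in bijection with the size-$m$ multisets drawn from $\{1,\dots,n\}$), I would conclude that $G(t)=\sum_{m\ge0}B_m(n)t^m$.

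The central step is to read off a recurrence from $G$. Taking the logarithmic derivative gives
\[
\frac{G'(t)}{G(t)}=\sum_{k=1}^n\frac{x_k}{1-x_kt}=\sum_{j\ge1}p_jt^{j-1},
\]
so that $G'(t)=G(t)\sum_{j\ge1}p_jt^{j-1}$. Comparing the coefficients of $t^{m-1}$ on the two sides yields the Newton-type identity
\[
mB_m(n)=\sum_{i=0}^{m-1}p_{m-i}B_i(n),\qquad B_0(n)=1.
\]

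Finally I would set $a_m:=A_m(n)/m!$; dividing the defining relation for $A_m(n)$ by $m!$ converts it verbatim into $m\,a_m=\sum_{i=0}^{m-1}p_{m-i}a_i$ with $a_0=1$. Since $a_m$ and $B_m(n)$ obey the same recurrence expressing the $m$-th term through all the earlier ones, and they agree at $m=0$, induction on $m$ forces $a_m=B_m(n)$, i.e.\ $A_m(n)=m!B_m(n)$. The only genuinely substantive point is the generating-function derivation of the Newton recurrence for $B_m(n)$; once that is in hand, the comparison with the rescaled $A_m(n)$ is purely mechanical, so I expect no real obstacle beyond careful bookkeeping of indices.
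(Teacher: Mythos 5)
Your proof is correct. Note that the paper itself supplies no argument for this lemma: it is quoted verbatim from the first author's earlier work (Thm.~4.1 of the cited reference, where it is established by direct induction on the defining sums), so there is no internal proof to compare against. Your route is a clean, self-contained alternative: identifying $B_m(n)$ with the complete homogeneous symmetric polynomial $h_m(x_1,\dots,x_n)$ via the bijection between weakly decreasing index strings and multisets, extracting the Newton--Girard recurrence
\[
m\,B_m(n)=\sum_{i=0}^{m-1}p_{m-i}\,B_i(n),\qquad p_j=\sum_{k=1}^n x_k^j,
\]
from the logarithmic derivative of $\prod_{k=1}^n(1-x_kt)^{-1}$, and observing that $a_m:=A_m(n)/m!$ satisfies the identical recurrence with $a_0=B_0(n)=1$, whence uniqueness of the recursively defined sequence gives $A_m(n)=m!\,B_m(n)$. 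All steps check out: the coefficient extraction is valid as an identity of formal power series (equivalently, of polynomials in the $x_k$, so the restriction $x_k\in\mathbb{C}$ causes no convergence issue), and the rescaling of the $A$-recurrence is exactly as you claim. The generating-function argument has the added benefit of making transparent why the lemma is really the classical Newton identity for complete homogeneous symmetric functions in disguise, which a bare induction on the sums tends to obscure.
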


\begin{lem}\label{lem-S-2} \emph{(\cite[Thm. 4.2]{Xu2017})}
Define two sequences ${{\bar A}_m(n)}$ and ${{\bar B}_m(n)}$ by
$${\bar A}_m(n) = (m-1)!(-1)^{m-1}\underset{i = 0}{\overset{m - 1}{\sum}}(-1)^{i}\dfrac{{\bar A}_i(n)}{i!}\underset{k = 1}{\overset{n}{\sum}}x_k^{m - i},\quad {\bar A}_0(n)=1,$$
$${\bar B}_m(n) = \underset{k_1 = 1}{\overset{n}{\sum}}x_{k_1}\underset{k_2 = 1}{\overset{k_1-1}{\sum}}x_{k_2}\cdots\underset{k_m = 1}{\overset{k_{m-1}-1}{\sum}}x_{k_m},\quad {\bar B}_0(n) = 1.$$
Then \[{\bar A}_m(n) = m!{\bar B}_m(n).\]
\end{lem}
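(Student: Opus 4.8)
The plan is to identify both sequences with classical symmetric-function data and then match them through the Newton--Girard identities. First I would observe that $\bar B_m(n)$ is exactly the $m$-th elementary symmetric polynomial of $x_1,\dots,x_n$: the nested ranges force the strictly decreasing chain $n\ge k_1>k_2>\cdots>k_m\ge 1$, so that
\begin{equation*}
\bar B_m(n)=\sum_{n\ge k_1>\cdots>k_m\ge 1} x_{k_1}\cdots x_{k_m}=e_m(x_1,\dots,x_n),
\end{equation*}
where I write $e_m$ for the $m$-th elementary symmetric polynomial and $e_0=1$. Abbreviating the power sums by $p_j:=\sum_{k=1}^n x_k^j$, the inner sum $\sum_{k=1}^n x_k^{m-i}$ in the definition of $\bar A_m(n)$ is precisely $p_{m-i}$.

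Next I would strip off the factorials. Setting $a_m:=\bar A_m(n)/m!$ with $a_0=1$, the defining recurrence collapses to
\begin{equation*}
m\,a_m=(-1)^{m-1}\sum_{i=0}^{m-1}(-1)^i a_i\,p_{m-i}.
\end{equation*}
Reindexing by $j=m-i$ and simplifying the sign via $(-1)^{m-1}(-1)^{m-j}=(-1)^{j-1}$, this becomes
\begin{equation*}
m\,a_m=\sum_{j=1}^{m}(-1)^{j-1}p_j\,a_{m-j},
\end{equation*}
which is exactly the Newton--Girard recursion characterizing the pair $(e_m,p_j)$. Since $a_0=1=e_0$, induction on $m$ then yields $a_m=e_m$ for all $m$, whence $\bar A_m(n)=m!\,a_m=m!\,e_m=m!\,\bar B_m(n)$, as claimed. (This runs parallel to Lemma~\ref{lem-S-1}, where the weak inequalities instead produce the complete homogeneous symmetric polynomials and the sign-free Newton identity.)

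A cleaner finish I might prefer is the generating-function version. Encoding the recurrence through $F(t):=\sum_{m\ge 0} a_m t^m$, the convolution above says $F'(t)=G(t)F(t)$ with $G(t):=\sum_{j\ge 1}(-1)^{j-1}p_j t^{j-1}=\sum_{k=1}^n x_k/(1+x_k t)$ and $F(0)=1$. Recognizing $G(t)=\frac{d}{dt}\log\prod_{k=1}^n(1+x_k t)$, I integrate to obtain $F(t)=\prod_{k=1}^n(1+x_k t)=\sum_{m\ge 0} e_m t^m$, again giving $a_m=e_m$. The verification $\bar B_m(n)=e_m$ is routine; the only step demanding genuine care is the sign bookkeeping in reindexing the $\bar A_m$ recurrence, since a single stray factor of $-1$ would turn Newton's identity into a false relation. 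Once the signs are pinned down, both the inductive and the generating-function arguments are immediate.
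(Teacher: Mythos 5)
Your proof is correct, but note that the paper itself offers no proof of this lemma to compare against: it is quoted verbatim from \cite[Thm. 4.2]{Xu2017}, so your argument can only be measured against that external source, where the identity is established by a direct induction on the defining recurrences. Your route is genuinely different and more conceptual: identifying $\bar B_m(n)$ with the elementary symmetric polynomial $e_m(x_1,\dots,x_n)$, normalizing $a_m=\bar A_m(n)/m!$, and checking that the recurrence collapses (after the reindexing $j=m-i$, whose sign computation $(-1)^{m-1}(-1)^{m-j}=(-1)^{j-1}$ you carry out correctly) to the Newton--Girard identity
\begin{equation*}
m\,e_m=\sum_{j=1}^{m}(-1)^{j-1}p_j\,e_{m-j},
\end{equation*}
after which uniqueness of the solution of the recursion with $a_0=e_0=1$ finishes the proof; the generating-function variant $F(t)=\prod_{k=1}^n(1+x_kt)$ is an equally valid packaging of the same induction. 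What your approach buys is transparency: it explains in one stroke why the weak-inequality companion (Lemma \ref{lem-S-1}) produces the complete homogeneous symmetric polynomials with the sign-free Newton identity, and it makes the paper's applications immediate --- the specialization $x_k=1/(2k-1)$ in \eqref{Eq-CF-Relat-mtv} is just the statement that $t_n(1_m)$ is an elementary symmetric polynomial in $1,1/3,\dots,1/(2n-1)$ while $t_n(j)$ is the corresponding power sum, and likewise for \eqref{Eq-mtsv-Relat} via Lemma \ref{lem-S-1}. What the source's inductive manipulation buys, by contrast, is self-containedness at a completely elementary level, with no appeal to the symmetric-function formalism. Your proof is complete as written; the only point that warranted care was the sign bookkeeping, and you handled it correctly.
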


\section{Expansion of $\log^m(x)$ and their higher derivatives}\label{sec-logm}
In this section, we will apply Legendre polynomials to obtain some expansions of $\log^m(x)$ and its higher derivatives. Recall that the
Rodrigues formula for the Legendre polynomials has the form
\begin{equation}\label{equ:LegendrePoly}
     P_n(x)=\frac{1}{2^n n!} \frac{d^n}{dx^n} (x^2-1)^n.
\end{equation}
Then we can find easily that
\begin{equation}\label{equ:PnAltDefn}
P_{n}(2x-1)=\frac{1}{n!}\frac{d^n}{dx^n}[x^n(x-1)^n].
\end{equation}

\begin{thm}\label{thm-LP-Log-XN}
For any positive integers $m,n$ and any real number $x>0$,
\begin{align}
\label{Eq-LP-Log-XN-2}
\frac{\log^m(x)}{m!(-1)^m}
&\,=1 +\sum\limits_{k=1}^m \sum\limits_{n=1}^\infty (-1)^n \frac{(2n+1)\ze_{n-1}(1_{k-1})\ze_{n+1}^\star(1_{m-k})}{n(n+1)} P_n(2x-1),\\
\frac{d^n}{dx^n} \left(\log^m (x)\right)
&\, =\frac{(-1)^n(n-1)!}{x^n} \sum\limits_{k=1}^m (-1)^k k!\binom{m}{k}\ze_{n-1}(1_{k-1}) \log^{m-k}(x). \label{Eq-LP-Log-XN}
\end{align}
\end{thm}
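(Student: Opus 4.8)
The plan is to prove the pointwise derivative formula \eqref{Eq-LP-Log-XN} first and then feed it into an integration-by-parts evaluation of the Fourier--Legendre (FL) coefficients of $\log^m(x)$, which produces \eqref{Eq-LP-Log-XN-2}; the two displays are logically sequential, with \eqref{Eq-LP-Log-XN} valid for all $x>0$ and \eqref{Eq-LP-Log-XN-2} read as the FL expansion on $(0,1]$. For \eqref{Eq-LP-Log-XN} I would apply Fa\`a di Bruno's formula (Lemma~\ref{lem1}) to $f(u)=u^m$ and $g(x)=\log x$. Since $f^{(k)}(u)=\frac{m!}{(m-k)!}u^{m-k}$ for $k\le m$ and $g^{(j)}(x)=(-1)^{j-1}(j-1)!\,x^{-j}$, \eqref{2.3} gives $\frac{d^n}{dx^n}\log^m x=\sum_{k=1}^{n}\frac{m!}{(m-k)!}\log^{m-k}(x)\,B_{n,k}\big(g^{(1)}(x),\dots,g^{(n-k+1)}(x)\big)$. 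By homogeneity of the partial Bell polynomials (rescaling each argument by $x^{-1}$) this factor equals $x^{-n}B_{n,k}\big(0!,-1!,2!,\dots,(-1)^{n-k}(n-k)!\big)$. The inner exponential generating function in \eqref{equ:Bell} for $x_j=(-1)^{j-1}(j-1)!$ is $\sum_{j\ge1}\frac{(-1)^{j-1}}{j}t^j=\log(1+t)$, so this Bell value is the signed Stirling number of the first kind $s(n,k)$; Lemma~\ref{lem-S-2} with $x_i=1/i$ identifies $|s(n,k)|=(n-1)!\,\ze_{n-1}(1_{k-1})$ (the strict nested sum $\bar B_{k-1}(n-1)$), whence $B_{n,k}=(-1)^{n-k}(n-1)!\,\ze_{n-1}(1_{k-1})$. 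Substituting, using $\frac{m!}{(m-k)!}=k!\binom{m}{k}$ and $(-1)^{n-k}=(-1)^n(-1)^k$, and extending the sum to $k=m$ (harmless since $\ze_{n-1}(1_{k-1})=0$ for $k>n$), yields \eqref{Eq-LP-Log-XN}.

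For the FL coefficients, since $\{P_n(2x-1)\}_{n\ge0}$ is orthogonal on $[0,1]$ with $\int_0^1 P_n(2x-1)^2\,dx=\frac1{2n+1}$, the $n$-th FL coefficient of $\frac{\log^m x}{m!(-1)^m}$ is $c_n=\frac{2n+1}{m!(-1)^m}\int_0^1\log^m(x)P_n(2x-1)\,dx$. First I would insert \eqref{equ:PnAltDefn}, $P_n(2x-1)=\frac1{n!}\frac{d^n}{dx^n}[x^n(x-1)^n]$, and integrate by parts $n$ times; all boundary terms vanish because $x^n(x-1)^n$ has zeros of order $n$ at both endpoints, which dominate the logarithmic singularity at $0$. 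This gives $\int_0^1\log^m(x)P_n(2x-1)\,dx=\frac{(-1)^n}{n!}\int_0^1\big(\frac{d^n}{dx^n}\log^m x\big)x^n(x-1)^n\,dx$, and substituting \eqref{Eq-LP-Log-XN} cancels $x^{-n}$ against $x^n$, leaving a sum over $k$ of the integrals $\int_0^1(x-1)^n\log^{m-k}(x)\,dx$.

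For the remaining integrals I would set $\mathrm{B}(a,n+1)=\int_0^1 x^{a-1}(1-x)^n\,dx=\frac{n!}{a(a+1)\cdots(a+n)}$, so that $\int_0^1(1-x)^n\log^{j}(x)\,dx=\frac{d^j}{da^j}\mathrm{B}(a,n+1)\big|_{a=1}$. Writing $\mathrm{B}(1+u,n+1)=\frac{1}{n+1}\prod_{l=1}^{n+1}(1+u/l)^{-1}=\frac1{n+1}\sum_{j\ge0}(-u)^j\,\ze^\star_{n+1}(1_j)$ (the complete-homogeneous symmetric-function expansion underlying Lemma~\ref{lem-S-1} with $x_i=1/i$) gives $\int_0^1(1-x)^n\log^{j}(x)\,dx=\frac{(-1)^j j!}{n+1}\ze^\star_{n+1}(1_j)$, hence $\int_0^1(x-1)^n\log^{m-k}(x)\,dx=\frac{(-1)^{n+m-k}(m-k)!}{n+1}\ze^\star_{n+1}(1_{m-k})$. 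Feeding this into the previous step, the prefactor $k!\binom{m}{k}(m-k)!=m!$ and the signs collapse to give $c_n=(-1)^n\frac{2n+1}{n(n+1)}\sum_{k=1}^m\ze_{n-1}(1_{k-1})\ze^\star_{n+1}(1_{m-k})$ for $n\ge1$, while $c_0=\frac1{m!(-1)^m}\int_0^1\log^m x\,dx=1$ supplies the constant $1$; this is exactly \eqref{Eq-LP-Log-XN-2}.

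The sign and factorial bookkeeping is routine; the crux is the pair of dual symmetric-function identities with the correct index shifts --- that $B_{n,k}$ of the shifted factorials is $(-1)^{n-k}(n-1)!\,\ze_{n-1}(1_{k-1})$ (an elementary-symmetric/strict statement at level $n-1$, Lemma~\ref{lem-S-2}) and that the $j$-th $a$-derivative of $\mathrm{B}(a,n+1)$ produces $\ze^\star_{n+1}(1_j)$ (a complete-homogeneous/non-strict statement at level $n+1$, Lemma~\ref{lem-S-1}). Matching these two lemmas to the two appearances of nested sums, and checking that the level shifts $n-1$ and $n+1$ combine correctly through the integration by parts, is where care is needed. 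A secondary point is the FL convergence: $\log^m\in L^2[0,1]$ gives $L^2$-convergence, and smoothness on $(0,1)$ upgrades it to pointwise convergence there.
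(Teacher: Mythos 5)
Your proposal is correct and follows essentially the same route as the paper: Fa\`a di Bruno's formula yields \eqref{Eq-LP-Log-XN}, which is then fed into the Fourier--Legendre coefficients via $n$-fold integration by parts against $\frac{1}{n!}\frac{d^n}{dx^n}[x^n(x-1)^n]$ and the evaluation of $\int_0^1(1-x)^n\log^{j}(x)\,dx$ in terms of $\ze_{n+1}^\star(1_j)$. The only (harmless) differences are in two sub-steps: you identify the Bell-polynomial value with signed Stirling numbers of the first kind through the generating function $\log(1+t)$ together with Lemma~\ref{lem-S-2}, where the paper instead inducts on the recurrence \eqref{3}, and you re-derive the log-Beta integral from the expansion of $B(1+u,n+1)$ in complete homogeneous symmetric functions, where the paper simply cites \cite[Eq.~(2.5)]{Xu2017}, i.e.\ its identity \eqref{Eq-x-n-1-Log1-x-IT}.
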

\begin{proof}
We first prove \eqref{Eq-LP-Log-XN}. Assume $n\ge 1$. Using \eqref{equ:Bell} we set
\[B_{n,k}:=B_{n,k}\left(x^{-1},-1!x^{-2},\ldots,(-1)^{n-k}(n-k)!x^{-(n-k+1)} \right).\]
By recurrence formula (\ref{3}) and by induction on $k$, we can prove easily that
\begin{align}
B_{n,k}=\frac{(-1)^{n-k}(n-1)!}{x^n} \ze_{n-1}(1_{k-1}).
\end{align}
We then apply Lemma \ref{lem1} with $f(t)=t^m$ and $g(x)=\log(x)$ to find that
\begin{align*}
f^{(k)}(g(x))= \left\{ {\begin{array}{*{20}{c}}k!\binom{m}{k}\log^{m-k}(x)
   {\ \ (k\leq m),}  \\
   \quad\quad\quad{0\quad\quad\quad\ \;\;\;(k>m).}  \\
\end{array} } \right.
\end{align*}
Further,
\begin{align}
\frac{d^n}{dx^n} \left(\log^m (x)\right)=\sum\limits_{k=0}^n f^{(k)}(g(x))B_{n,k}.
\end{align}
Hence, adding up these three contributions yields \eqref{Eq-LP-Log-XN}.

We now prove \eqref{Eq-LP-Log-XN-2}. By Fourier--Legendre series expansions, we have
\begin{align}\label{Eq-FL-Logx-not}
\log^m(x)=\sum_{n=0}^\infty \left\{(2n+1)\int_0^1 P_n(2x-1)\log^m(x)dx\right\}P_n(2x-1).
\end{align}
Using integration by parts and \eqref{Eq-LP-Log-XN}, we see that
\begin{align}\label{Eq-FL-Logxnot-2}
&\int_0^1 P_n(2x-1)\log^m(x)dx=\frac1{n!} \int_0^1 \left\{ \frac{d^n}{dx^n}\log^m(x)dx\right\} x^n(1-x)^ndx\nonumber\\
&=\frac{(-1)^n}{n} \sum\limits_{k=1}^m (-1)^k k!\binom{m}{k}\ze_{n-1}(1_{k-1})\int_0^1 (1-x)^n\log^{m-k}(x)dx \nonumber\\
&=\frac{(-1)^{m+n}m!}{n(n+1)} \sum\limits_{k=1}^{m} \ze_{n-1}(1_{k-1})\ze_{n+1}^\star(1_{m-k}),
\end{align}
where we have used the well-known identity (\cite[Eq. (2.5)]{Xu2017})
\begin{align}\label{Eq-x-n-1-Log1-x-IT}
\int_0^1 x^{n-1} \log^m(1-x)dx=(-1)^mm!\frac{\ze_n^\star(1_m)}{n}.
\end{align}
Thus, \eqref{Eq-LP-Log-XN-2} follows from \eqref{Eq-FL-Logx-not} and \eqref{Eq-FL-Logxnot-2} immediately.
We have now completed the proof of Thm.~\ref{thm-LP-Log-XN}.
\end{proof}

\section{Expansion of $\log^m(x)/\sqrt{x}$ and their higher derivatives}\label{sec-logm-sqrt}
In this section, we will apply Legendre polynomials to obtain an expansion of $\log^m(x)/\sqrt{x}$.
In order to do this, we will first find an expression of the higher derivatives of $\log^m(x)/\sqrt{x}$
using multiple $t$-harmonic sums in Thm.~\ref{thm-LP-Log}, which in turn requires us to use
some important facts of the Eulerian Beta function defined by
\begin{equation*}
B\left( {\alpha,\beta} \right) := \int_0^1 x^{\alpha - 1} (1-x)^{\beta - 1}\, dx
= \frac{\Gamma(\alpha)\Gamma(\beta)}{\Gamma(\alpha + \beta)}, \;
{\mathop{\Re}\nolimits}(\alpha)>0, {\mathop{\Re}\nolimits}(\beta)>0
\end{equation*}
and the digamma function defined by
\begin{equation*}
\psi(x) = \frac{\Gamma'(x)}{\Gamma(x)}
\end{equation*}
where $\Gamma(x)$ is the usual gamma function.

 From the definition, it is obvious that
\begin{equation*}
 \frac{\partial B(\alpha ,\beta)}{\partial \alpha}=B(\alpha ,\beta)[\psi(\alpha)-\psi(\alpha +\beta)].
\end{equation*}
Therefore, differentiating this equality $m-1$ times by the Leibniz rule, we can deduce that
\begin{align}\label{Eq-Beta-Relat}
\frac{\partial^m B(\alpha ,\beta)}{\partial \alpha^m} =
\sum_{i=0}^{m-1} \binom{m-1}{i} \frac{\partial^i B(\alpha ,\beta)}{\partial\alpha ^i}
\Big[\psi^{(m - i - 1)} (\alpha) - \psi^{(m - i - 1)} (\alpha+\beta) \Big].
\end{align}
Here, $\psi^{(m)}(x)$ stands for the polygamma function of order $m$ defined as the
$(m+1)$-st derivative of the logarithm of the gamma function:
\begin{equation*}
\psi^{(m)}(x):=\frac{d^m}{dx^m}\psi(x) = \frac{d^{m+1}}{dx^{m+1}}\log \Gamma(x).
\end{equation*}
Observe that $\psi^{(m)}(x)$ satisfy the following relations
\begin{equation*}
\psi(z)=-\gamma+ \sum_{n = 0}^\infty \left( \frac{1}{n + 1}-\frac{1}{n + z}\right),\ z\notin \N^-_0:=\{0,-1,-2,\ldots\},
\end{equation*}
\begin{equation*}
\psi^{(n)}(z) = (-1)^{n + 1} n!\sum_{k=0}^\infty  \frac1{(z+k)^{n+1}},\ n\in \N,
\end{equation*}
\begin{equation*}
\psi(x + n) = \frac{1}{x} + \frac{1}{x + 1} +  \cdots  + \frac{1}{x + n - 1} + \psi(x),\ n \in \N .
\end{equation*}
Here, $\gamma$ denotes the Euler-Mascheroni constant, defined by
\begin{equation*}
\gamma:=\lim _{n\to\infty} \left(\sum_{k=1}^n \frac{1}{k}-\log n\right)=-\psi(1) \approx  0. 57721566490153286.
\end{equation*}
Note that if $\alpha=1/2$, then
\begin{align}\label{Eq-Psi}
\psi^{(m-i-1)}(1/2+n)-\psi^{(m-i-1)}(1/2)=(-1)^{m-i-1}(m-i-1)!2^{m-i} t_n(m-i).
\end{align}
Hence, setting $\alpha=1/2$ and $\beta=n+1$ in \eqref{Eq-Beta-Relat} yields
\begin{align}\label{Eq-Beta-Relat-2}
\frac{(-1)^m}{2^mm!} \left. \frac{\partial^{m}B(\alpha ,\beta)}{\partial\alpha^m} \right|_{\alpha=1/2,\beta=n+1}=\frac{1}{m}\sum_{i=0}^{m-1}\frac{(-1)^i}{2^ii!}
\left.\frac{\partial^{i}B(\alpha,\beta)}{\partial\alpha ^i} \right|_{\alpha=1/2,\beta=n+1}t_{n+1}(m-i).
\end{align}
In particular, if $m=0$ then
\begin{equation}\label{BetaHalf}
 B\Big(\frac12,n+1\Big)=\frac{2\cdot 4^n}{(2n+1)\binn}.
\end{equation}

\begin{pro} For any positive integers $n$ and $k$,
\begin{align}\label{Eq-Sum-MHS}
a_n(k):=\frac{\ze_{n-1}(1_{k-1})}{n}+\sum_{i+j=n,\atop i,j\geq 1}\frac{\ze_{i-1}(1_{k-1})}{i} \frac{\binom{2j}{j}}{4^j}=\frac{1}{4^n}\binn t_n(1_k)2^k.
\end{align}
\end{pro}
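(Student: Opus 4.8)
The plan is to prove the identity by generating functions, reading off each side as the coefficient of $x^n$ in an explicit power series. First I would observe that the left-hand side $a_n(k)$ is exactly a Cauchy product. Setting $b_j:=\binom{2j}{j}/4^j$ (so that $\sum_{j\ge0}b_jx^j=(1-x)^{-1/2}$) and $c_i:=\ze_{i-1}(1_{k-1})/i$, the first term $\ze_{n-1}(1_{k-1})/n$ is simply $c_nb_0$, so that
$$a_n(k)=\sum_{i+j=n,\,i\ge1,\,j\ge0}c_ib_j=[x^n]\,C(x)(1-x)^{-1/2},\qquad C(x):=\sum_{i\ge1}c_ix^i.$$
Thus the whole problem reduces to computing two one-variable generating functions and comparing them.

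For $C(x)$ I would use the standard fact that $\ze_m(1_{k-1})$ is the elementary symmetric polynomial $e_{k-1}(1,\tfrac12,\dots,\tfrac1m)$, whence $\sum_{m\ge0}\ze_m(1_{k-1})x^m=(1-x)^{-1}(-\log(1-x))^{k-1}/(k-1)!$. Differentiating $C$ term by term gives $C'(x)=\sum_{m\ge0}\ze_m(1_{k-1})x^m$, and since both sides vanish at $x=0$, integrating produces the clean closed form $C(x)=(-\log(1-x))^k/k!$. Therefore $\sum_{n\ge1}a_n(k)x^n=(1-x)^{-1/2}(-\log(1-x))^k/k!$.

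It then remains to show the right-hand side has the same generating function, i.e. that $\sum_{n\ge1}\tfrac{2^k}{4^n}\binom{2n}{n}t_n(1_k)x^n$ equals the same series. Here I would pass to the \emph{double} generating function in an auxiliary variable $y$, using that $t_n(1_k)=e_k(1,\tfrac13,\dots,\tfrac1{2n-1})$, so that $\sum_{k\ge0}t_n(1_k)y^k=\prod_{j=1}^n(1+\tfrac{y}{2j-1})$. The key simplification is the collapse to a Pochhammer symbol,
$$\frac{1}{4^n}\binom{2n}{n}\prod_{j=1}^n\Big(1+\frac{y}{2j-1}\Big)=\prod_{j=1}^n\frac{2j-1+y}{2j}=\prod_{j=1}^n\frac{j-1+\frac{1+y}{2}}{j}=\frac{\big(\tfrac{1+y}{2}\big)_n}{n!}.$$
Summing the binomial series then yields $\sum_{n\ge0}\tfrac1{4^n}\binom{2n}{n}\prod_{j=1}^n(1+\tfrac{y}{2j-1})x^n=(1-x)^{-(1+y)/2}=(1-x)^{-1/2}\exp\big(-\tfrac{y}{2}\log(1-x)\big)$. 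Extracting the coefficient of $y^k$ and multiplying by $2^k$ reproduces $(1-x)^{-1/2}(-\log(1-x))^k/k!$ exactly, matching the generating function of $a_n(k)$; comparing coefficients of $x^n$ finishes the proof. (The shift from $n\ge1$ to $n\ge0$ is harmless since $t_0(1_k)=0$ and $C(0)=0$ for $k\ge1$.)

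The one genuinely delicate point — the hard part — is recognizing and justifying the two elementary-symmetric-polynomial interpretations together with the Pochhammer collapse in the last display: once $t_n(1_k)$ and $\ze_m(1_{k-1})$ are identified as $e_k$ and $e_{k-1}$ of $\{1/(2j-1)\}$ and $\{1/m\}$, respectively, everything becomes a manipulation of the binomial series for $(1-x)^{-a}$. I would double-check the case $k=1$ by hand ($C(x)=-\log(1-x)$ against the right-hand generating function $2F_1(x)=-\log(1-x)/\sqrt{1-x}$) to guard against index-shift errors, which are the main source of mistakes in this kind of convolution argument.
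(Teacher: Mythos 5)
Your proof is correct, but its second half takes a genuinely different route from the paper's. Both arguments begin identically: the left-hand side is recognized as a Cauchy product, whose generating function is $\frac{(-1)^k}{k!}\log^k(1-x)\,(1-x)^{-1/2}$ (this is \eqref{Eq-GF-Sums} in the paper; you justify the factor $\sum_{n\ge1}\ze_{n-1}(1_{k-1})x^n/n=(-\log(1-x))^k/k!$ by integrating the generating function of $\ze_m(1_{k-1})$ rather than by the shuffle product, a minor difference). The divergence is in how $\frac{2^k}{4^n}\binn t_n(1_k)$ is matched to this series. The paper writes $a_n(k)=\frac{1}{k!\,n!}\lim_{\alpha\to1/2}\frac{\partial^k}{\partial\alpha^k}(\alpha)_n$, derives from the polygamma recursion \eqref{Eq-CF-1} the recurrence \eqref{Eq-CF-Relat} for $a_n(k)$, derives from Lemma \ref{lem-S-2} (a Newton-type identity) the matching recurrence \eqref{Eq-CF-Relat-mtv} for $t_n(1_m)$, and concludes by induction on $k$ from the common initial value $a_n(0)=\frac1{4^n}\binn$. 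You instead handle $k$ with a second generating variable: since $t_n(1_k)$ is the elementary symmetric polynomial $e_k\bigl(1,\tfrac13,\dots,\tfrac1{2n-1}\bigr)$, you have $\sum_{k\ge0}t_n(1_k)y^k=\prod_{j=1}^n\bigl(1+\tfrac{y}{2j-1}\bigr)$, and the collapse $\frac1{4^n}\binn\prod_{j=1}^n\bigl(1+\tfrac{y}{2j-1}\bigr)=\bigl(\tfrac{1+y}{2}\bigr)_n/n!$ together with the binomial series gives the closed bivariate form $(1-x)^{-(1+y)/2}$, from which extracting $[y^k]$ is immediate. In effect you compute the $k$-th Taylor coefficient of $(\alpha)_n$ at $\alpha=1/2$ in closed form by factoring the Pochhammer symbol, where the paper only characterizes it through a recurrence. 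Your route is shorter and self-contained: it bypasses Lemma \ref{lem-S-2} and the polygamma manipulations entirely and makes the origin of the factor $2^k$ transparent; the paper's Beta/polygamma machinery, though longer here, is set up because it is reused later (e.g.\ in \eqref{Eq-mtsv-beta-Finall} and the proof of Theorem \ref{thm-LP-Log-2}). One tiny stylistic caveat: in your final sanity check the notation ``$2F_1(x)$'' (meaning twice the $k=1$ generating function) risks confusion with the hypergeometric ${}_2F_1$; the index bookkeeping itself, including $t_0(1_k)=0$ for $k\ge1$, is handled correctly.
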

\begin{proof} By the well-known binomial expansion,
\begin{align}\label{Eq-GF-sqrt1-x}
\frac{1}{\sqrt{1-x}}-1=\sum_{n=1}^\infty \frac{1}{4^n}\binn x^n,\quad x\in [-1,1).
\end{align}
Further, using the shuffle product we see that
\begin{align}
\log^k(1-x)=&(-1)^k\Li_1(x)^k=(-1)^kk!\int_0^x \left(\frac{dt}{1-t}\right)^k=(-1)^kk!\Li_{1_k}(x,1_{k-1})\notag\\
=&(-1)^kk!\sum_{n=1}^\infty \frac{\ze_{n-1}(1_{k-1})}{n}x^n,\quad x\in [-1,1).\label{Eq-GF-Log-k}
\end{align}
Hence, by the \emph{Cauchy product}, we obtain
\begin{align}\label{Eq-GF-Sums}
\frac{(-1)^k}{k!}\frac{\log^k(1-x)}{\sqrt{1-x}}&=\sum_{n=1}^\infty \left\{\frac{\ze_{n-1}(1_{k-1})}{n}+\sum_{i+j=n,\atop i,j\geq 1}\frac{\ze_{i-1}(1_{k-1})}{i} \frac{\binom{2j}{j}}{4^j}\right\}x^n\nonumber\\
&=\frac1{k!}\lim_{\alpha\to 1/2}\frac{\partial^k}{\partial\alpha^k}\frac1{(1-x)^\alpha}=\sum_{n=1}^\infty a_n(k)x^n.
\end{align}
Denote by $(\alpha)_n$ the Pochhammer symbol (or the shifted factorial) given by
\begin{equation*}
 (\alpha)_0:=1, \quad (\alpha)_n:=\alpha(\alpha+1)\cdots(\alpha+n-1),\ \forall n\ge 1.
\end{equation*}
Then, combining \eqref{Eq-GF-Sums} with the binomial series expansion $(1-x)^{-\alpha}=\sum_{n=0}^\infty (\alpha)_nx^n/n!, x\in(-1,1)$, we get
\begin{align}\label{Eq-CF}
a_n(k)=\frac{1}{k!n!}\lim_{\alpha\to 1/2} \frac{\partial^k}{\partial\alpha^k} (\alpha)_n.
\end{align}
In particular, when $k=0$ we obtain
\begin{align}\label{Eq-CF-0}
a_n(0)=\frac{1}{4^n}\binn.
\end{align}

By a simple calculation, the $\frac{\partial^k (\alpha)_n}{\partial x^k}$ satisfy a recurrence relation in the form of
\begin{align}\label{Eq-CF-1}
\frac{\partial^k (\alpha)_n}{\partial x^k}
= \sum_{i=0}^{k-1} \binom{k-1}{i}\frac{\partial^i (\alpha)_n}{\partial x^i}
\Big[ \psi^{(k-i-1)}(\alpha+n)-\psi^{(k-i-1)}(\alpha) \Big],\ k\in \N.
\end{align}
Hence, taking $m=k$ in \eqref{Eq-Psi} to evaluate \eqref{Eq-CF-1} and using \eqref{Eq-CF}, we find the recurrence relation
\begin{align}\label{Eq-CF-Relat}
a_n(k)=\frac{(-1)^{k-1}}{k}\sum_{i=0}^{k-1}(-1)^i2^{k-i} a_n(i)t_n(k-i).
\end{align}
In Lemma \ref{lem-S-2}, letting $x_k=\frac{1}{2k-1}$ yields
\begin{align}\label{Eq-CF-Relat-mtv}
t_n(1_m)=\frac{(-1)^{m-1}}{m}\sum_{i=0}^{m-1}(-1)^i t_n(1_i)t_n(m-i).
\end{align}
Multiplying \eqref{Eq-CF-Relat-mtv} by $\frac{2^m}{4^n}\binn$, then comparing it with \eqref{Eq-CF-Relat}, we obtain the result
\begin{align}\label{Eq-CF-Finally}
a_n(m)=\frac{2^m}{4^n}\binn t_n(1_m).
\end{align}
Finally, setting $m=k$ in \eqref{Eq-CF-Finally} yields the desired formula \eqref{Eq-Sum-MHS}.
\end{proof}

We an now consider the expansions of the higher derivatives of $\log^m(x)/\sqrt{x}$.
\begin{thm}\label{thm-LP-Log}
For any $m,n\in\N_0$ and any real number $x>0$,
\begin{align}\label{Eq-LP-Log}
\frac{d^n}{dx^n} \left(\frac{\log^m(x)}{\sqrt{x}} \right)=(-1)^n \frac{(2n)!}{n!4^n}\sum_{k=0}^m (-1)^k k!\binom{m}{k}2^kt_n(1_k)\frac{\log^{m-k}(x)}{x^{n+1/2}}.
\end{align}
\end{thm}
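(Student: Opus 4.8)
The plan is to expand $\log^m(x)/\sqrt{x}=x^{-1/2}\log^m(x)$ by the Leibniz rule, substitute the derivative formula \eqref{Eq-LP-Log-XN} already proved for $\log^m(x)$, and then recognize the resulting convolution of multiple harmonic sums against central binomial coefficients as exactly the quantity $a_n(k)$ studied in the Proposition. First I would dispose of the degenerate cases. For $n=0$ the right-hand side of \eqref{Eq-LP-Log} collapses to its $k=0$ term because $t_0(1_k)=0$ for $k\ge1$ and $t_0(\emptyset)=1$, reproducing $\log^m(x)/\sqrt{x}$; for $m=0$ only the $k=0$ term survives and one uses $\frac{d^n}{dx^n}x^{-1/2}=(-1)^n\frac{(2n)!}{4^n n!}x^{-n-1/2}$. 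So I may assume $m,n\ge1$ and apply Leibniz: $\frac{d^n}{dx^n}\big(x^{-1/2}\log^m x\big)=\sum_{j=0}^n\binom{n}{j}\big(\frac{d^j}{dx^j}x^{-1/2}\big)\big(\frac{d^{n-j}}{dx^{n-j}}\log^m x\big)$, inserting $\frac{d^j}{dx^j}x^{-1/2}=(-1)^j\frac{(2j)!}{4^j j!}x^{-1/2-j}$ and, for each factor with $n-j\ge1$, the expansion \eqref{Eq-LP-Log-XN}. The single $j=n$ term carries the pure $\log^m(x)$ piece and will furnish the $k=0$ contribution, while every power of $x$ recombines to $x^{-n-1/2}$.

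The crux is then to fix $k$ with $1\le k\le m$ and collect the coefficient of $(-1)^k k!\binom{m}{k}\log^{m-k}(x)\,x^{-n-1/2}$. Using $\binom{n}{j}(n-j-1)!=\frac{n!}{j!(n-j)}$ and $\frac{(2j)!}{4^j j!}=\frac{j!}{4^j}\binom{2j}{j}$, each summand simplifies to $(-1)^n n!\,\frac{1}{n-j}\,\frac{\binom{2j}{j}}{4^j}\,\ze_{n-j-1}(1_{k-1})$, and after re-indexing $i=n-j$ the total coefficient becomes $(-1)^n n!\sum_{i=1}^n\frac{\ze_{i-1}(1_{k-1})}{i}\frac{\binom{2(n-i)}{n-i}}{4^{n-i}}$. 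The $i=n$ summand reproduces the leading term $\ze_{n-1}(1_{k-1})/n$ and the remaining $i=1,\dots,n-1$ reproduce the convolution in \eqref{Eq-Sum-MHS}, so this sum is precisely $a_n(k)$. Invoking the Proposition in the form \eqref{Eq-CF-Finally}, namely $a_n(k)=\frac{2^k}{4^n}\binom{2n}{n}t_n(1_k)$, together with $n!\binom{2n}{n}/4^n=(2n)!/(n!4^n)$, turns the coefficient into $(-1)^n\frac{(2n)!}{n!4^n}2^k t_n(1_k)$, which is exactly the claimed weight in \eqref{Eq-LP-Log}. The $k=0$ coefficient, coming solely from the $j=n$ Leibniz term, equals $(-1)^n\frac{(2n)!}{4^n n!}$, matching the formula with $t_n(\emptyset)=1$.

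The main obstacle is conceptual rather than computational: the real content is spotting that the central-binomial convolution of $\ze_{i-1}(1_{k-1})/i$ generated by Leibniz is literally the quantity $a_n(k)$, so that the Proposition converts the multiple harmonic sums into the multiple $t$-harmonic sums $t_n(1_k)$; everything else is careful bookkeeping of signs, factorials, and index ranges. As a slicker alternative that bypasses the convolution entirely, I would observe that $x^{-1/2}\log^m x=\partial_a^m x^a\big|_{a=-1/2}$, commute $\partial_a$ past $\frac{d^n}{dx^n}$ to obtain $\partial_a^m\big[(a)^\downarrow_n\,x^{a-n}\big]$ with the falling factorial $(a)^\downarrow_n=a(a-1)\cdots(a-n+1)$, and then use the reflection identity $(a)^\downarrow_n\big|_{a=-\alpha}=(-1)^n(\alpha)_n$ to reduce the $a$-derivatives at $a=-1/2$ to the $\alpha$-derivatives of $(\alpha)_n$ at $\alpha=1/2$ already computed in \eqref{Eq-CF} and \eqref{Eq-CF-Finally}; a Leibniz expansion in $a$ then yields \eqref{Eq-LP-Log} directly.
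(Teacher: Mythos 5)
Your proposal is correct and follows essentially the same route as the paper's own proof: a Leibniz expansion of $\frac{d^n}{dx^n}\bigl(x^{-1/2}\log^m x\bigr)$ using \eqref{Eq-LP-Log-XN} and $\frac{d^j}{dx^j}x^{-1/2}=(-1)^j\frac{(2j)!}{4^jj!}x^{-1/2-j}$, followed by recognizing the resulting convolution as $a_n(k)$ and invoking \eqref{Eq-Sum-MHS} to produce $t_n(1_k)$. Your explicit handling of the degenerate cases $m=0$, $n=0$ and the closing alternative via $\partial_a^m x^a\big|_{a=-1/2}$ are harmless additions, but the core argument is identical to the paper's.
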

\begin{proof}
By a direct calculation, we have
\begin{align}\label{Eq-Diff-x}
\frac{d^n}{dx^n}\left(\frac1{\sqrt{x}}\right)=(-1)^n\frac{(2n)!}{n!4^n}x^{-1/2-n}.
\end{align}
Then, using \eqref{Eq-LP-Log-XN} and \eqref{Eq-Diff-x}, we can deduce by the Leibniz rule that
\begin{align}\label{Eq-Diff-x2}
&\frac{d^n}{dx^n} \left(\frac{\log^m (x)}{\sqrt{x}} \right)=\sum_{i+j=n,\atop i,j\geq 0} \frac{n!}{i!j!} \frac{d^i}{dx^i} \left(\log^m (x)\right) \frac{d^j}{dx^j}\left(\frac1{\sqrt{x}}\right)\nonumber\\
&=(-1)^n\frac{(2n)!}{n!4^n} \frac{\log^m (x)}{x^{n+1/2}}\nonumber\\&\quad+(-1)^nn!\sum_{k=1}^m (-1)^kk!\binom{m}{k} \left\{\frac{\ze_{n-1}(1_{k-1})}{n}+\sum_{i+j=n,\atop i,j\geq 1}\frac{\ze_{i-1}(1_{k-1})}{i} \frac{\binom{2j}{j}}{4^j} \right\}\frac{\log^{m-k}(x)}{x^{n+1/2}}.
\end{align}
Thus, combining \eqref{Eq-Diff-x2} with \eqref{Eq-Sum-MHS}, we can complete the proof of the theorem immediately.
\end{proof}

\begin{thm}\label{thm-LP-Log-2}
For any $m\in\N_0$ and any real number $x>0$,
\begin{align}\label{Eq-LP-Log-2}
\frac{\log^m (x)}{\sqrt{x}}=(-1)^mm!2^{m+1}\sum_{k=0}^m \sum_{n=0}^\infty (-1)^nt_n(1_k)t_{n+1}^\star(1_{m-k})P_n(2x-1).
\end{align}
\end{thm}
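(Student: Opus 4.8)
The plan is to compute the Fourier--Legendre coefficients of $\log^m(x)/\sqrt{x}$ directly, following the same scheme used for \eqref{Eq-LP-Log-XN-2} in Thm.~\ref{thm-LP-Log-XN}. Writing $\log^m(x)/\sqrt{x}=\sum_{n=0}^\infty c_n P_n(2x-1)$ with $c_n=(2n+1)\int_0^1 P_n(2x-1)\log^m(x)x^{-1/2}\,dx$, I would first invoke the Rodrigues-type identity \eqref{equ:PnAltDefn} and integrate by parts $n$ times to move all $n$ derivatives off $x^n(x-1)^n$ and onto $\log^m(x)/\sqrt{x}$. The boundary terms vanish: each of the first $n-1$ derivatives of $x^n(x-1)^n$ carries a vanishing factor at the endpoints, and paired against the corresponding derivative of $\log^m(x)/\sqrt{x}$ the product vanishes like $x^{1/2}$ times a power of $\log x$ as $x\to0^+$, and is regular at $x=1$. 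This yields
\[
\int_0^1 P_n(2x-1)\frac{\log^m(x)}{\sqrt{x}}\,dx=\frac{1}{n!}\int_0^1 x^n(1-x)^n\frac{d^n}{dx^n}\Big(\frac{\log^m(x)}{\sqrt{x}}\Big)\,dx,
\]
where the two factors $(-1)^n$ coming from the $n$-fold integration by parts and from $(x-1)^n=(-1)^n(1-x)^n$ cancel.

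Next I would substitute the closed form for the $n$-th derivative from Thm.~\ref{thm-LP-Log}, i.e.\ \eqref{Eq-LP-Log}. After the cancellation $x^n\cdot x^{-(n+1/2)}=x^{-1/2}$, the $x$-integral reduces, term by term in the index $k$, to $\int_0^1 x^{-1/2}(1-x)^n\log^{m-k}(x)\,dx=\partial_\alpha^{\,m-k}B(\alpha,n+1)|_{\alpha=1/2}$, a partial derivative of the Beta function of the type already prepared in Section~\ref{sec-logm-sqrt}.

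The heart of the argument, and the step I expect to be the main obstacle, is evaluating this Beta derivative in terms of a multiple $t$-harmonic star sum. Concretely, I would prove that $b_n(j):=\frac{(-1)^j}{2^jj!}\,\partial_\alpha^{\,j}B(\alpha,n+1)|_{\alpha=1/2}$ equals $B(\tfrac12,n+1)\,t_{n+1}^\star(1_j)$, by induction on $j$. The base case $j=0$ is \eqref{BetaHalf}. For the inductive step, the recurrence \eqref{Eq-Beta-Relat-2} shows that $b_n(j)=\frac1j\sum_{i=0}^{j-1}b_n(i)\,t_{n+1}(j-i)$, which is exactly the Newton-type recurrence $t_{n+1}^\star(1_j)=\frac1j\sum_{i=0}^{j-1}t_{n+1}^\star(1_i)\,t_{n+1}(j-i)$ obeyed by the star sums. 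The latter I would obtain from Lemma~\ref{lem-S-1} upon setting $x_k=1/(2k-1)$ and letting the outer index run up to $n+1$, so that $B_j=t_{n+1}^\star(1_j)$ and the inner power sums become $t_{n+1}(j-i)$. Matching the two recurrences with the common initial value $b_n(0)=B(\tfrac12,n+1)=t_{n+1}^\star(1_0)\cdot B(\tfrac12,n+1)$ gives the claim, whence $\partial_\alpha^{\,j}B|_{\alpha=1/2,\beta=n+1}=(-1)^j2^jj!\,B(\tfrac12,n+1)\,t_{n+1}^\star(1_j)$.

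Finally I would assemble the pieces. Taking $j=m-k$, the above identity turns $\int_0^1 x^{-1/2}(1-x)^n\log^{m-k}(x)\,dx$ into $(-1)^{m-k}2^{m-k}(m-k)!\,\frac{2\cdot4^n}{(2n+1)\binn}\,t_{n+1}^\star(1_{m-k})$. Substituting back, the signs combine to $(-1)^{n+m}$, the powers of two to $2^{m+1}$, the factors $4^{\pm n}$ cancel, the ratio $(2n)!/(n!)^2$ cancels against $1/\binn$ via $\binn=(2n)!/(n!)^2$, and the symmetric weight $k!\,(m-k)!\binom{m}{k}=m!$ pulls out of the $k$-sum. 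This leaves $c_n=(2n+1)\int_0^1 P_n(2x-1)\log^m(x)x^{-1/2}\,dx=(-1)^{n+m}2^{m+1}m!\sum_{k=0}^m t_n(1_k)\,t_{n+1}^\star(1_{m-k})$, and inserting $c_n$ into the Fourier--Legendre series yields \eqref{Eq-LP-Log-2}. As a consistency check, the case $m=0$ collapses to the known expansion $x^{-1/2}=2\sum_{n=0}^\infty(-1)^nP_n(2x-1)$.
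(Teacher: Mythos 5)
Your proposal is correct and follows essentially the same route as the paper's own proof: Fourier--Legendre coefficients computed by $n$-fold integration by parts via \eqref{equ:PnAltDefn}, substitution of the derivative formula \eqref{Eq-LP-Log}, identification of the resulting integral as $\partial_\alpha^{\,m-k}B(\alpha,n+1)|_{\alpha=1/2}$, and evaluation of that Beta derivative as $(-1)^{j}2^{j}j!\,B(\tfrac12,n+1)\,t_{n+1}^\star(1_j)$ by matching the recurrence \eqref{Eq-Beta-Relat-2} against the Newton-type recurrence from Lemma~\ref{lem-S-1} with $x_k=1/(2k-1)$. The only difference is presentational: you spell out as an explicit induction on $j$ what the paper compresses into ``comparing \eqref{Eq-Beta-Relat-2} with \eqref{Eq-mtsv-Relat} gives \eqref{Eq-mtsv-beta-Finall}.''
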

\begin{proof}
By Fourier--Legendre series expansions, we have
\begin{align}\label{Eq-FL-LogSqrtx}
\frac{\log^m(x)}{\sqrt{x}}=\sum_{n=0}^\infty \left\{(2n+1)\int_0^1 P_n(2x-1)\frac{\log^m(x)}{\sqrt{x}}dx\right\}P_n(2x-1).
\end{align}
Using integration by parts and \eqref{Eq-LP-Log}, we deduce that
\begin{align}\label{Eq-FL-LogSqrtx-2}
&\int_0^1 P_n(2x-1)\frac{\log^m(x)}{\sqrt{x}}dx=\frac1{n!} \int_0^1 \left\{ \frac{d^n}{dx^n}\frac{\log^m(x)}{\sqrt{x}}\right\} x^n(1-x)^ndx\nonumber\\
&=(-1)^n \frac{1}{4^n}\binn \sum_{k=0}^m (-1)^k k! \binom{m}{k} 2^k t_n(1_k)\int_0^1 x^{-1/2}(1-x)^n\log^{m-k}(x)dx.
\end{align}
Observe that
\begin{align}\label{Eq-Beta-Integral}
\int_0^1 x^{-1/2}(1-x)^n\log^{m}(x)dx={\left. {\frac{{{\partial ^{m}}B\left( {\alpha ,\beta } \right)}}{{\partial {\alpha ^m}}}} \right|_{\alpha=1/2,\beta  = n+1}}.
\end{align}
On the other hand, from Lemma \ref{lem-S-1}, changing $n$ to $n+1$ and letting $x_k=\frac1{2k-1}$, we obtain
\begin{align}\label{Eq-mtsv-Relat}
t_{n+1}^\star(1_m)=\frac{1}{m}\sum_{i=0}^{m-1}t_{n+1}^\star (1_i) t_{n+1}(m-i).
\end{align}
Hence, comparing \eqref{Eq-Beta-Relat-2} with \eqref{Eq-mtsv-Relat} gives
\begin{align}\label{Eq-mtsv-beta-Finall}
{\left. {\frac{{{\partial ^{m}}B\left( {\alpha ,\beta } \right)}}{{\partial {\alpha ^m}}}} \right|_{\alpha=1/2,\beta  = n+1}}=(-1)^mm!2^{m+1} \frac{4^n}{(2n+1)\binn} t_{n+1}^\star(1_m).
\end{align}
Thus, combining \eqref{Eq-FL-LogSqrtx-2}, \eqref{Eq-Beta-Integral} and \eqref{Eq-mtsv-beta-Finall}, we get
\begin{align}\label{Eq-FL-Finall}
\int_0^1 P_n(2x-1)\frac{\log^m(x)}{\sqrt{x}}dx=\frac{(-1)^{n+m}m!2^{m+1}}{2n+1}\sum_{k=0}^m  t_n(1_k)t_{n+1}^\star(1_{m-k}).
\end{align}
Finally, combining \eqref{Eq-FL-Finall} with \eqref{Eq-FL-LogSqrtx} we arrive at \eqref{Eq-LP-Log-2}.
This finishes the proof of the theorem.
\end{proof}

\begin{re} In fact, we can find the following more general results
\begin{align}\label{Eq-LP-Log-general}
\frac{d^n}{dx^n} \left(\frac{\log^m(x)}{x^\alpha} \right)=(-1)^n (\alpha)_n\sum_{k=0}^m (-1)^k k!\binom{m}{k}\ze_n(1_k;\alpha)\frac{\log^{m-k}(x)}{x^{n+\alpha}}
\end{align}
for $\alpha\neq 0,-1,-2,-3,\ldots$, and
\begin{align}\label{Eq-LP-Log-2-general}
\frac{\log^m (x)}{x^\alpha}=(-1)^mm!\sum_{k=0}^m \sum_{n=0}^\infty (-1)^n\frac{(2n+1)(\alpha)_n}{(1-\alpha)_{n+1}}\ze_n(1_k;\alpha)\ze_{n+1}^\star(1_{m-k};1-\alpha)P_n(2x-1)
\end{align}
for $\alpha\notin\Z$,
by a similar argument as in the proofs of \eqref{Eq-LP-Log-2} and \eqref{Eq-LP-Log}. Here, for a composition $\bfk=(k_1,\ldots,k_r)$ and $\alpha\neq 0,-1,-2,-3,\ldots$,
\begin{align*}
&\ze_n(\bfk;\alpha):=\sum\limits_{n\geq n_1>\cdots>n_r>0 } \frac{1}{(n_1+\alpha-1)^{k_1}\cdots (n_r+\alpha-1)^{k_r}},\\
&\ze^\star_n(\bfk;\alpha):=\sum\limits_{n\geq n_1\geq\cdots\geq n_r>0} \frac{1}{(n_1+\alpha-1)^{k_1}\cdots (n_r+\alpha-1)^{k_r}}.
\end{align*}
If $n<k$ then ${\ze_n}(\bfk;\alpha):=0$ and ${\ze_n}(\emptyset;\alpha)={\ze^\star _n}(\emptyset;\alpha):=1$.
\end{re}

\section{Ap\'{e}ry-type series involving central binomial coefficients}\label{sec-Apery}
Recall that the complete elliptic integral of the first kind is defined by
\begin{equation}\label{KEQ1}
K(x)=\int_0^{\pi/2} \frac{dt}{\sqrt{1-x \sin^2 t}}=\frac{\pi}{2} {}_2F_1\Big(\frac12, \frac12; 1 \Big| x\Big).
\end{equation}
By \cite[(8)]{Camp-Aur-Son19} we see that
\begin{align}\label{KEQ}
K(x)=\frac{\pi}{2}\sum\limits_{n=0}^\infty \bigg[\frac1{4^n}\binn\bigg]^2x^n=2\sum\limits_{n=0}^\infty \frac{P_n(2x-1)}{2n+1}.
\end{align}

In this section, using the above Fourier--Legendre expansion of $K(x)$ we will derive the
evaluation of some Ap\'{e}ry-type series involving squares of central binomial coefficients
in terms of colored MZVs.

\begin{thm}\label{thm-ASASs1}
For any $m\in\N_0$ we have
\begin{align}\label{ASASs1}
\sum\limits_{n=0}^\infty \bigg[\frac1{4^n}\binn\bigg]^2\frac1{(n+1)^{m+1}}
&\, =\frac{4}{\pi}\left\{1+\sum\limits_{k=1}^m \sum\limits_{n=1}^\infty \frac{\ze_{n-1}(1_{k-1})\ze_{n+1}^\star(1_{m-k})}{n(n+1)(2n+1)} (-1)^n \right\},\\
\label{ASASs2}
\sum_{n=0}^\infty \bigg[\frac1{4^n}\binn\bigg]^2\frac{\ze_{n+1}^\star(1_m)}{n+1}
&\, =\frac{4}{\pi}\left\{1+\sum\limits_{k=1}^m \sum\limits_{n=1}^\infty \frac{\ze_{n-1}(1_{k-1})\ze_{n+1}^\star(1_{m-k})}{n(n+1)(2n+1)} \right\}.
\end{align}
\end{thm}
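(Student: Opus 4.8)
The plan is to exploit the two representations of the complete elliptic integral $K(x)$ recorded in \eqref{KEQ}---its power series with coefficients $\big[\frac1{4^n}\binn\big]^2$ and its Fourier--Legendre expansion $2\sum_{n\ge0}P_n(2x-1)/(2n+1)$---and to play them against each other through a single auxiliary integral. For \eqref{ASASs1} the bridge will be
\[
I_m:=\int_0^1 K(x)\,\frac{\log^m(x)}{m!\,(-1)^m}\,dx,
\]
which I will evaluate in two different ways and then equate.

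First I would insert the power series for $K(x)$ and integrate term by term, using the elementary evaluation $\int_0^1 x^n\log^m(x)\,dx=(-1)^mm!/(n+1)^{m+1}$ (differentiate $\int_0^1 x^s\,dx=1/(s+1)$ in $s$ a total of $m$ times). This produces $I_m=\frac{\pi}{2}\sum_{n\ge0}\big[\frac1{4^n}\binn\big]^2(n+1)^{-(m+1)}$, i.e.\ $\pi/2$ times the left-hand side of \eqref{ASASs1}. Next I would instead insert the Fourier--Legendre expansion of $K(x)$ and integrate term by term; the required integrals $\int_0^1 P_n(2x-1)\log^m(x)\,dx$ are already supplied by \eqref{Eq-FL-Logxnot-2}. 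After dividing by $m!(-1)^m$ these contribute $1$ for $n=0$ and $(-1)^n\sum_{k=1}^m\ze_{n-1}(1_{k-1})\ze_{n+1}^\star(1_{m-k})/\big(n(n+1)\big)$ for $n\ge1$, and the extra weight $2/(2n+1)$ from the Legendre coefficients of $K$ assembles them into $\pi/2$ times the right-hand side of \eqref{ASASs1}. Equating the two values of $I_m$ and cancelling $\pi/2$ gives \eqref{ASASs1}.

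For \eqref{ASASs2} I would run the same argument with $\log^m(x)$ replaced by $\log^m(1-x)$, i.e.\ with $J_m:=\int_0^1 K(x)\log^m(1-x)/(m!(-1)^m)\,dx$. The only new ingredient is the integral $\int_0^1 P_n(2x-1)\log^m(1-x)\,dx$; the substitution $x\mapsto1-x$ together with the reflection $P_n(1-2x)=(-1)^nP_n(2x-1)$ turns it into $(-1)^n\int_0^1 P_n(2x-1)\log^m(x)\,dx$, so the sign $(-1)^n$ coming from \eqref{Eq-FL-Logxnot-2} is exactly cancelled---this is the structural reason \eqref{ASASs2} carries no alternating factor. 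The power-series side of $J_m$ is then handled by \eqref{Eq-x-n-1-Log1-x-IT}, in the shifted form $\int_0^1 x^n\log^m(1-x)\,dx=(-1)^mm!\,\ze_{n+1}^\star(1_m)/(n+1)$, which yields the left-hand side of \eqref{ASASs2}, while the Legendre side yields its right-hand side.

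The step I expect to require the most care is justifying the two term-by-term integrations, since $K$ has a logarithmic singularity at $x=1$ while $\log^m(x)$ (resp.\ $\log^m(1-x)$) blows up at the opposite endpoint. For the power-series evaluations this is painless: on $(0,1)$ one has $K(x)\ge0$ and $(\log x)^m/(-1)^m=|\log x|^m\ge0$ (and likewise with $1-x$), so all summands are nonnegative and Tonelli's theorem applies. For the Legendre evaluations I would invoke Parseval's identity for the orthogonal system $\{P_n(2x-1)\}$ in $L^2([0,1])$: both $K$ and $\log^m(x)$ (resp.\ $\log^m(1-x)$) lie in $L^2([0,1])$ because their endpoint singularities are only logarithmic, so the pairing equals the sum of the products of the Fourier--Legendre coefficients, which is precisely the termwise result. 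With these interchanges secured, both identities follow from the bookkeeping above together with \eqref{Eq-LP-Log-XN-2}, \eqref{KEQ}, \eqref{Eq-FL-Logxnot-2} and \eqref{Eq-x-n-1-Log1-x-IT}.
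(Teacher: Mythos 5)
Your proposal is correct and follows essentially the same route as the paper's own proof: multiply the two expansions of $K(x)$ in \eqref{KEQ} by $\log^m(x)$ (resp.\ $\log^m(1-x)$), integrate over $(0,1)$, and invoke \eqref{Eq-FL-Logxnot-2}, \eqref{Eq-x-n-1-Log1-x-IT} and the reflection $P_n(-x)=(-1)^nP_n(x)$ exactly as the authors do. The only difference is that you justify the term-by-term integrations (via Tonelli and Parseval), a point the paper leaves implicit.
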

\begin{proof}
Multiplying \eqref{KEQ} by $\log^m(x)$ and integrating over the interval $(0,1)$, we can prove \eqref{ASASs1} by a straight-forward calculation with the aid of  \eqref{Eq-FL-Logxnot-2}.

Similarly, by \cite[Eq. (2.5)]{Xu2017} we have
\begin{align}\label{LPS-Log-K1}
\int_0^1 K(x)\log^m(1-x)dx=(-1)^mm!\frac{\pi}{2}\sum_{n=0}^\infty \bigg[\frac1{4^n}\binn\bigg]^2\frac{\ze_{n+1}^\star(1_m)}{(n+1)}.
\end{align}
Now, multiplying \eqref{KEQ} by $\log^m(1-x)$ and integrating over the interval $(0,1)$, we see that
\begin{align}\label{LPS-Log-K}
\int_0^1 K(x)\log^m(1-x)dx=2\sum_{n=0}^\infty \frac 1{2n+1}\int_0^1 P_n(2x-1)\log^m(1-x)dx.
\end{align}
Since the Legendre polynomial satisfies $P_n(-x)=(-1)^nP(x)$ by \eqref{equ:LegendrePoly},
applying \eqref{Eq-FL-Logxnot-2} we obtain
\begin{align}\label{LPS-Log-2}
\int\limits_{0}^1 P_n(2x-1)\log^m(1-x)dx=\frac{(-1)^{m}m!}{n(n+1)} \sum\limits_{k=1}^{m} \ze_{n-1}(1_{k-1})\ze_{n+1}^\star(1_{m-k}).
\end{align}
Thus, \eqref{ASASs2} follows from \eqref{LPS-Log-K1} to \eqref{LPS-Log-2} immediately. This concludes the proof of the theorem.
\end{proof}

In fact, we have the following results.
\begin{pro}\label{pro-inCMZV24}
For any integers $m\ge k\ge 0$ we have
\begin{align*}
\sum\limits_{n=1}^\infty \frac{\ze_{n-1}(1_{k-1})\ze_{n+1}^\star(1_{m-k})}{n(n+1)(2n+1)}\in\CMZV^{\le2}\quad \text{and}\quad
\sum\limits_{n=1}^\infty \frac{\ze_{n-1}(1_{k-1})\ze_{n+1}^\star(1_{m-k})}{n(n+1)(2n+1)} (-1)^n\in \CMZV^{\le4},
\end{align*}
where $\CMZV^{\le N}$ is the $\Q$-span of CMZVs of level $\le N$. In particular, $\CMZV^{\le 1}=\Q+\MZV$
where $\MZV$ is the $\Q$-span of all MZVs.
\end{pro}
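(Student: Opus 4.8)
The plan is to express both series as iterated integrals and read off the level from the singularities of the integrands. Throughout write $T_{\pm}=\sum_{n\ge1}\ze_{n-1}(1_{k-1})\ze^\star_{n+1}(1_{m-k})(\pm1)^n/\big(n(n+1)(2n+1)\big)$, so that $T_{+}$ is the first series and $T_{-}$ the second. First I would keep the factor $1/(n+1)$ attached to the star sum and invoke the shifted form of \eqref{Eq-x-n-1-Log1-x-IT}, namely $\ze^\star_{n+1}(1_{m-k})/(n+1)=\frac{(-1)^{m-k}}{(m-k)!}\int_0^1 y^{n}\log^{m-k}(1-y)\,dy$. Absorbing $(\pm1)^n$ into $y^n$ turns $T_\pm$ into $\frac{(-1)^{m-k}}{(m-k)!}\int_0^1\log^{m-k}(1-y)\,G_\pm(y)\,dy$, where $G_\pm(y)=\sum_{n\ge1}\ze_{n-1}(1_{k-1})(\pm y)^n/\big(n(2n+1)\big)$. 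The elementary identity $1/(n(2n+1))=1/n-2/(2n+1)$ then splits $G_\pm$ into an \emph{integer} part and an \emph{odd} part, which I handle separately.

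For the integer part, \eqref{Eq-GF-Log-k} sums the series in closed form: $\sum_{n\ge1}\ze_{n-1}(1_{k-1})(\pm y)^n/n=\frac{(-1)^k}{k!}\log^k(1\mp y)$, contributing a single integral $\int_0^1\log^{m-k}(1-y)\log^k(1\mp y)\,dy$ up to a rational factor. For $T_{+}$ this is $\int_0^1\log^{m}(1-y)\,dy=(-1)^m m!$, hence rational; for $T_{-}$ it is $\int_0^1\log^{m-k}(1-y)\log^k(1+y)\,dy$, a classical integral lying in $\CMZV^{\le2}$. So the integer part never exceeds level $2$.

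The odd part is the crux. Using $1/(2n+1)=\int_0^1 s^{2n}\,ds$ and differentiating \eqref{Eq-GF-Log-k} to obtain $\sum_{n\ge1}\ze_{n-1}(1_{k-1})w^n=\frac{(-1)^{k+1}}{(k-1)!}\,w\log^{k-1}(1-w)/(1-w)$ at $w=\pm ys^2$, this part becomes a double integral over $(y,s)\in(0,1)^2$ of $\log^{m-k}(1-y)$ times $(\pm ys^2)\log^{k-1}(1\mp ys^2)/(1\mp ys^2)$, up to a rational factor. I would then factor the kernel: for $T_{+}$, $1-ys^2=(1-s\sqrt y)(1+s\sqrt y)$ produces only the poles $\pm1$, whereas for $T_{-}$, $1+ys^2=(1-is\sqrt y)(1+is\sqrt y)$ produces the fourth roots of unity $\pm i$. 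After rationalizing substitutions that clear $\sqrt y$, the integrand is a product of $\mathrm{d}\log$ forms with singular points in $\{0,\pm1\}$ (resp.\ $\{0,\pm1,\pm i\}$), so Chen's iterated-integral calculus writes the odd part as a $\Q$-linear combination of CMZVs of level $2$ (resp.\ level $4$). Adding back the integer part proves the two memberships. Finally $\CMZV^{\le1}=\Q+\MZV$ is immediate from \eqref{equ:defnMPL}, since the only first root of unity is $1$, so every level-$1$ multiple polylogarithm value is either a convergent MZV or a rational constant.

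The step I expect to be the main obstacle is the level bookkeeping in the odd part: the square root $\sqrt y$ entering through $s^{2n}$ is exactly what makes the denominator $2n+1$ genuinely level-two flavored, and one must verify that rationalizing it forces no singularities beyond $\pm1$ in the plain case and none beyond $\pm i$ (no higher roots of unity) in the signed case. Carrying the signs and the $\sqrt y$ consistently through the $(y,s)$-integration, so that the bounds $2$ and $4$ are met exactly, is the delicate part; by contrast the reductions for the integer part and the closed-form summations are routine.
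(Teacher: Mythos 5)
Your proposal is sound and reaches the right conclusion, but by a genuinely different route than the paper. The paper's proof is algebraic and reductive: it first observes that $\ze_{n-1}(1_{k-1})\ze_{n+1}^\star(1_{m-k})$ lies in $\Q\left[\frac1n,\frac1{n+1},\ze_n(\cdots)\right]$, applies the partial-fraction decomposition \eqref{Eq-3} to isolate sums with denominators $n^i$ and $(n+1)^j$ (which give MZVs, as in \eqref{Eq-4}) together with the two genuinely new shapes $\frac1{n(2n+1)}$ and $\frac1{(n+1)(2n+1)}$, and then disposes of the latter via the parity-doubling identity \eqref{zen.ze2n}: writing $\ze_n(\bfk)$ as a signed combination of $\ze_{2n}(\bfk;\bfsi)$ turns the odd denominator $2n+1$ into a consecutive-integer denominator at the doubled index, after which \cite[Thm.~3.1]{XuZhao2020b} gives membership in $\CMZV^{\le2}$; the alternating case is asserted to follow ``in a similar fashion'' (the extra $(-1)^n$ becomes $i^N$ at the doubled index $N=2n$, whence level $4$). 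You instead work with integral representations throughout: \eqref{Eq-x-n-1-Log1-x-IT} for the star factor, \eqref{Eq-GF-Log-k} and its derivative for the other factor, the split $\frac1{n(2n+1)}=\frac1n-\frac2{2n+1}$, and $\frac1{2n+1}=\int_0^1 s^{2n}\,ds$, so that the level is read off from the singularities of the resulting integrand. The two mechanisms are the same phenomenon in different clothes --- your rationalization of $\sqrt y$ is precisely the paper's passage from index $n$ to index $2n$ --- but your version is self-contained (only identities already proved in the paper, no external theorem) and makes visibly clear why $2n+1$ forces level $2$ and why the extra sign forces $\pm i$; the paper's version is shorter and treats arbitrary numerators $\ze_n(\bfk)$ uniformly, at the price of citing \cite{XuZhao2020b}.

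Before your sketch counts as a complete proof, you must fill in the step you yourself flag. The double integral over $(y,s)\in(0,1)^2$ is not yet an iterated integral: the clean way is the nested substitution $y=u^2$, $v=us$, which turns the odd part into a rational multiple of $\int_0^1\log^{m-k}(1-u^2)\left(\int_0^u\frac{(\pm v^2)\log^{k-1}(1\mp v^2)}{1\mp v^2}\,dv\right)du$, after which the outer logarithms are multiplied in by the shuffle product. Two technical points then remain. First, not all forms that appear are d-log forms: $\frac{v^2\,dv}{1\mp v^2}$ and the outermost $du$ contain plain Lebesgue pieces, which must be eliminated by integration by parts or termwise series integration --- standard, but this is where the logarithmic blow-up of $\log^{m-k}(1-u^2)$ at $u=1$ has to be watched. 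Second, in the signed case the partial fraction $\frac{dv}{1+v^2}=\frac i2\left(\frac{dv}{i-v}-\frac{dv}{-i-v}\right)$ carries coefficients in $\Q(i)$, so literally you obtain membership in the $\Q(i)$-span of level-$4$ CMZVs and must pass to real parts to match the stated $\Q$-span; this looseness is shared by the paper (its own Example \ref{eg:mixedWt} places $G$ and $\pi$ in $\CMZV^{\le4}$ under the same implicit convention). None of this threatens the argument; it is exactly the delicate bookkeeping you anticipated.
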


\begin{proof}
According to the definition of multiple harmonic (star) sums, we know that
\begin{equation*}
 \ze_{n-1}(1_{k-1})\ze_{n+1}^\star(1_{m-k})\in \Q\left[\frac1{n},\frac1{n+1},\ze_n(\cdots)\right].
\end{equation*}
Hence, to prove the proposition we need to evaluate the following sums
\begin{align}\label{Eq-2}
\sum\limits_{n=1}^\infty \frac{\ze_{n}(k_1,\ldots,k_r)}{n^p(n+1)^q(2n+1)}\quad \text{and}\quad
\sum\limits_{n=1}^\infty \frac{\ze_{n}(k_1,\ldots,k_r)}{n^p(n+1)^q(2n+1)} (-1)^n.
\end{align}
By induction of $p+q$ we get the following partial fraction expansion,
\begin{align}\label{Eq-3}
\frac{1}{n^p(n+1)^q(2n+1)}=\sum_{i=2}^p \frac{a_i}{n^i}+\sum_{j=2}^q \frac{b_j}{(n+1)^j}+\frac{c}{n(2n+1)}+\frac{d}{(n+1)(2n+1)},
\end{align}
where $a_i,b_j,c$ and $d$ are rational numbers. Obviously,
\begin{align}\label{Eq-4}
\sum\limits_{n=1}^\infty \frac{\ze_{n}(k_1,\ldots,k_r)}{n^i}\in \MZV\quad\text{and}\quad \sum\limits_{n=1}^\infty \frac{\ze_{n}(k_1,\ldots,k_r)}{(n+1)^j}\in \MZV.
\end{align}
Moreover, noting the fact that
\begin{align}
&\ze_n(s_1,s_2,\ldots,s_m)\nonumber\\
&\quad=2^{s_1+s_2+\cdots+s_m-m}\sum_{2n\geq n_1>n_2>\cdots>n_m\geq1}
    \frac{(1+(-1)^{n_1})(1+(-1)^{n_2})\cdots(1+(-1)^{n_m})}
        {n_1^{s_1}n_2^{s_2}\cdots n_m^{s_m}}\nonumber\\
&\quad=2^{s_1+s_2+\cdots+s_m-m}\sum_{\substack{\si_j\in\{\pm 1\}\\j=1,2,\ldots,m}}
      \ze_{2n}(s_1,\ldots,s_m;\si_1,\dots,\si_m)\,.\label{zen.ze2n}
\end{align}
Hence, we have
\begin{align}
\sum\limits_{n=1}^\infty \frac{\ze_{n}(k_1,\ldots,k_r)}{n(2n+1)}&=2^{k_1+k_2+\cdots+k_r-r+1}\sum_{\substack{\si_j\in\{\pm 1\}\\j=1,2,\ldots,r}}\sum\limits_{n=1}^\infty \frac{\ze_{2n}(s_1,\ldots,s_r;\si_1,\dots,\si_r)}{2n(2n+1)}\nonumber\\
&=2^{k_1+k_2+\cdots+k_r-r}\sum_{\substack{\si_j\in\{\pm 1\}\\ j=1,2,\ldots,r}}
    \sum_{n=1}^\infty \frac{\ze_{n}(s_1,\ldots,s_m;\si_1,\dots,\si_m)}{n(n+1)}(1+(-1)^n).
\end{align}
Then, applying \cite[Thms.~3.1]{XuZhao2020b} (by taking $n=1$ there), we obtain
\begin{align}
\sum\limits_{n=1}^\infty \frac{\ze_{n}(k_1,\ldots,k_r)}{n(2n+1)}\in \CMZV^{\le2}.
\end{align}
Similarly, we have
\begin{align}
\sum\limits_{n=1}^\infty \frac{\ze_{n}(k_1,\ldots,k_r)}{(n+1)(2n+1)}\in \CMZV^{\le2}.
\end{align}
Hence,
\begin{align}
\sum\limits_{n=1}^\infty \frac{\ze_{n-1}(1_{k-1})\ze_{n+1}^\star(1_{m-k})}{n(n+1)(2n+1)}\in\CMZV^{\le2}.
\end{align}
We can prove the second formula in the proposition in a similar fashion.
\end{proof}

\begin{cor}
For any $m\in\N_0$ we have
\begin{align*}
\sum\limits_{n=0}^\infty \bigg[\frac1{4^n}\binn\bigg]^2\frac1{(n+1)^{m+1}}\in \frac1{\pi} \CMZV^{\le4}, \qquad
\sum_{n=0}^\infty \bigg[\frac1{4^n}\binn\bigg]^2\frac{\ze_{n+1}^\star(1_m)}{n+1}\in \frac1{\pi} \CMZV^{\le2}.
\end{align*}
\end{cor}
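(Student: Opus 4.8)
The plan is to obtain both memberships as immediate consequences of Thm.~\ref{thm-ASASs1} together with Prop.~\ref{pro-inCMZV24}, using only the structural fact that each $\CMZV^{\le N}$ is a $\Q$-vector space containing the constant $1$.

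First I would record the observation that makes the bookkeeping legitimate. By definition $\CMZV^{\le N}$ is the $\Q$-span of the CMZVs of level $\le N$; in particular it is closed under finite $\Q$-linear combinations, and since the weight-$0$ CMZV equals $1$ it contains $\Q$. Consequently any quantity of the shape $c_0+\sum_{\text{finite}} c_i X_i$ with $c_0,c_i\in\Q$ and $X_i\in\CMZV^{\le N}$ again lies in $\CMZV^{\le N}$.

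For the first series I would start from the closed form \eqref{ASASs1}. The bracketed quantity there is $1$ plus a finite $\Q$-linear combination (over $1\le k\le m$) of the alternating sums $\sum_{n\ge1}\ze_{n-1}(1_{k-1})\ze_{n+1}^\star(1_{m-k})(-1)^n/[n(n+1)(2n+1)]$, each of which lies in $\CMZV^{\le4}$ by the second assertion of Prop.~\ref{pro-inCMZV24} (applicable since $m\ge k\ge1\ge0$). Hence the bracket lies in $\CMZV^{\le4}$, and multiplying by the rational factor $4$ and then by $1/\pi$ places the whole series in $\frac1{\pi}\CMZV^{\le4}$; the degenerate case $m=0$ simply gives $4/\pi\in\frac1{\pi}\CMZV^{\le4}$, consistent with the empty inner sum. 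The second series is treated identically, now invoking the closed form \eqref{ASASs2} and the first (non-alternating) assertion of Prop.~\ref{pro-inCMZV24}, which places each inner sum in $\CMZV^{\le2}$ and hence the series in $\frac1{\pi}\CMZV^{\le2}$.

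Since the whole argument is just a combination of two already-established results, I do not expect any genuine obstacle; the one point that must be verified is the $\Q$-vector-space closure recorded above, so that adjoining the leading $1$ and rescaling by $4$ keep us inside $\CMZV^{\le N}$.
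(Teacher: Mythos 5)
Your proposal is correct and is exactly the argument the paper intends: the corollary is stated without proof precisely because it follows immediately by substituting the membership statements of Prop.~\ref{pro-inCMZV24} into the closed forms \eqref{ASASs1} and \eqref{ASASs2} of Thm.~\ref{thm-ASASs1}, using that $\CMZV^{\le N}$ is a $\Q$-vector space containing $\Q$. Your explicit verification of that closure property and of the degenerate case $m=0$ is a harmless (and sound) elaboration of the same route.
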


\begin{exa}\label{eg:mixedWt}
By straight-forward calculations and Au's package \cite{Au2020},
we have the following cases (which can also be found in \cite{Camp-Aur-Son19,Can-Aur2019})
\begin{align*}
&\sum_{n=0}^\infty \bigg[\frac1{4^n}\binn\bigg]^2\frac1{n+1}=\frac4{\pi},\\
&\sum_{n=0}^\infty \bigg[\frac1{4^n}\binn\bigg]^2\frac1{(n+1)^2}=\frac{16}{\pi}-4,\\
&\sum_{n=0}^\infty \bigg[\frac1{4^n}\binn\bigg]^2\frac1{(n+1)^3}=\frac{16}{\pi}(3-2G-\pi+\pi\log 2),\\
&\sum_{n=0}^\infty \bigg[\frac1{4^n}\binn\bigg]^2\frac{\ze_{n+1}^\star(1)}{n+1}=\frac4{\pi}(1-\log 2),\\
&\sum_{n=0}^\infty \bigg[\frac1{4^n}\binn\bigg]^2\frac{\ze_{n+1}^\star(1,1)}{n+1}=\frac4{\pi}\big(12-2\ze(2)-16\log 2+8\log^2(2)\big),
\end{align*}
and
\begin{equation*}
\sum_{n=0}^\infty \bigg[\frac1{4^n}\binn\bigg]^2\frac1{(n+1)^4}=
\frac{128}{\pi}\left(1-G-2{\rm Im}\Li_3\Big(\frac{1+i}{2}\Big)  \right)
-48+6\pi^2+64\log 2-24 \log^2(2),
\end{equation*}
where $i=\sqrt{-1}$ and $G:=\sum_{n=0}^\infty \frac{(-1)^n}{(2n+1)^2}$ is \emph{Catalan's constant}. Note that
$\Li_3\Big(\frac{1+i}{2}\Big)\in\CMZV^4$ by the next theorem which is of independent interest.
\end{exa}

\begin{thm}
For all $\bfk\in\N^r$ we have
\begin{equation*}
\Li_\bfk\Big(\frac{1+i}{2}\Big),\ \Li_\bfk\Big(\frac{1-i}{2}\Big)\in\CMZV_{|\bfk|}^4.
\end{equation*}
\end{thm}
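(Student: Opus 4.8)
The plan is to reduce everything to the single-variable polylogarithm values $\Li_k\big(\tfrac{1\pm i}{2}\big)$ and then to iterate. The key observation is that $\tfrac{1\pm i}{2}$ are the two primitive $8$th roots of unity divided by $\sqrt2$, but more usefully they satisfy $\big|\tfrac{1\pm i}{2}\big|^2=\tfrac12$ and $\tfrac{1+i}{2}\cdot\tfrac{1-i}{2}=\tfrac12$, while $1-\tfrac{1+i}{2}=\tfrac{1-i}{2}$. Thus these two points are swapped by $z\mapsto 1-z$ and are complex conjugates of each other. The strategy is to use the iterated-integral representation of $\Li_\bfk(z)$ as a word in the two forms $\tfrac{dt}{t}$ and $\tfrac{dt}{1-t}$ integrated from $0$ to $z$, and to exploit the symmetry group of $\{0,1,\tfrac{1+i}{2},\tfrac{1-i}{2}\}$ under the involutions $z\mapsto 1-z$ and complex conjugation.

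First I would treat the depth-one case $\Li_k\big(\tfrac{1+i}{2}\big)$. Here one has the classical functional equations for the polylogarithm. The most efficient route is Landen-type / inversion formulas relating $\Li_k(z)$, $\Li_k(1-z)$, and $\Li_k\big(\tfrac{z}{z-1}\big)$ to lower-weight polylogarithms times powers of $\log$. Observe that when $z=\tfrac{1+i}{2}$ we have $1-z=\tfrac{1-i}{2}=\bar z$ and $\tfrac{z}{z-1}=\tfrac{(1+i)/2}{(i-1)/2}=-i$, which is a $4$th root of unity. Since $\Li_k(-i)=\sum_{n}(-i)^n/n^k$ is by definition a CMZV of level $4$, the depth-one inversion/Landen relations express $\Li_k\big(\tfrac{1+i}{2}\big)$ as a $\Q$-linear combination of $\Li_j(\pm i),\Li_j(\pm1)$ of weight $\le k$ together with products involving $\log\big(\tfrac{1+i}{2}\big)=\log\tfrac{1}{\sqrt2}+\tfrac{i\pi}{4}$; the latter is itself $-\tfrac12\log2$ plus a rational multiple of $i\pi=\Li_1(\text{root of unity})$-data, hence in $\CMZV^4_1$. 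This anchors the induction at weight $1$ and depth $1$, and conjugation handles $\Li_k\big(\tfrac{1-i}{2}\big)$ simultaneously.

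Next I would induct on the weight $|\bfk|$ and the depth $r$ simultaneously. Write $\Li_\bfk(z)$ as an iterated integral $\int_0^z \om_{a_1}\cdots\om_{a_w}$ where each $\om_0=\tfrac{dt}{t}$ and $\om_1=\tfrac{dt}{1-t}$, with the word determined by $\bfk$. Splitting the path of integration from $0$ to $z=\tfrac{1+i}{2}$ through the intermediate point $\tfrac12$ (the real midpoint, which lies on the segment), the portion from $0$ to $\tfrac12$ produces iterated integrals with endpoint $\tfrac12$, i.e. values $\Li_\bfj\big(\tfrac12\big)$ known to lie in $\CMZV^{\le2}$ (level-$2$ data), and the portion from $\tfrac12$ to $\tfrac{1+i}{2}$ can be analyzed by the path-composition (shuffle-regularization) formula, expressing the full integral as a $\Q$-bilinear combination of iterated integrals over the two sub-paths. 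The remaining piece is governed by the change of variable $t\mapsto 1-t$ or $t\mapsto \bar t$, which permutes the singularities $\{0,1\}$ among $\{0,1,\tfrac{1+i}{2},\tfrac{1-i}{2}\}$ and converts an integral ending at $\tfrac{1+i}{2}$ into one ending at the conjugate/reflected point plus lower-depth correction terms from the regularization of the logarithmic singularity at the endpoints.

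The main obstacle I anticipate is the bookkeeping of the path-decomposition together with shuffle regularization at the base points $0$, $\tfrac12$, and $\tfrac{1+i}{2}$, since the words $\om_1$ contribute divergences at $t=1$ and one must track the tangential base-point (regularized) contributions so that the divergent $\log$'s assemble into honest CMZVs of level $4$ rather than genuine divergences. In particular one must verify that every $\log$-factor produced — coming from $\log\tfrac12$, $\log\big(\tfrac{1+i}{2}\big)$, and $\log\big(\tfrac{1-i}{2}\big)$ — lies in $\CMZV^4_1=\Q\log2+\Q\,i\pi$ (using $\log2,\,i\pi/4\in\CMZV^4_1$), so that the weight grading is preserved and all products stay inside $\CMZV^4_{|\bfk|}$. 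Once this is checked, the induction closes: each application of path-composition or the involution strictly decreases either the depth or replaces $\tfrac{1\pm i}{2}$-endpoints by level-$\le4$ roots of unity, and the inductive hypothesis supplies the lower-weight, lower-depth pieces, yielding $\Li_\bfk\big(\tfrac{1\pm i}{2}\big)\in\CMZV^4_{|\bfk|}$.
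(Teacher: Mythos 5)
Your proposal has two genuine gaps, and both occur exactly at the places where the argument must actually reach a fourth root of unity. First, the depth-one anchor. You invoke ``Landen-type / inversion formulas relating $\Li_k(z)$, $\Li_k(1-z)$, and $\Li_k\big(\tfrac{z}{z-1}\big)$ to lower-weight polylogarithms times powers of $\log$'' for all $k$. No such formulas exist beyond low weight: the two-term Landen and Euler reflection identities are special to $\Li_2$, and at weight $3$ the only available identity is the three-term trilogarithm relation. Worse, at $z=\frac{1+i}{2}$ one has $1-z=\bar z$, so all of these classical identities (combined with inversion, which sends $z$ to $1-i$, again not a root of unity) only pin down the symmetric combination $\Li_k(z)+\Li_k(\bar z)=2\,{\rm Re}\,\Li_k(z)$; the imaginary part is left completely undetermined already for $k=3$. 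This is consistent with the paper itself, in which ${\rm Im}\,\Li_3\big(\frac{1+i}{2}\big)$ and ${\rm Im}\,\Li_4\big(\frac{1+i}{2}\big)$ occur as constants whose membership in $\CMZV^4$ comes precisely from this theorem via higher-depth level-4 values, not from any reduction to $\Li_j(\pm 1),\Li_j(\pm i)$ and logarithms. So your depth-one step asserts something strictly stronger than the theorem and is not provable by the identities you cite. The germ of a correct idea is nearby: the Landen involution $t\mapsto t/(t-1)$ sends $\frac{1+i}{2}$ to $-i$, preserves $\{0,1,\infty\}$, and pulls back $\frac{dt}{t}\mapsto\frac{dt}{t}+\frac{dt}{1-t}$, $\frac{dt}{1-t}\mapsto-\frac{dt}{1-t}$; applied to the \emph{whole iterated integral} (not to the depth-one function) it writes $\Li_\bfk\big(\frac{1+i}{2}\big)$ as a $\Z$-linear combination of $\Li_\bfj(-i)$ over compositions $\bfj$ of $|\bfk|$, which proves the theorem outright, with no induction and no regularization.

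Second, your inductive step never closes. Splitting the path at $\frac12$ produces level-2 factors $\Li_\bfj\big(\tfrac12\big)$ (that part is fine) multiplied by iterated integrals over the segment from $\frac12$ to $\frac{1+i}{2}$, and your only tools for those are $t\mapsto 1-t$ and $t\mapsto\bar t$. Both involutions map the pair of endpoints $\big\{\frac{1+i}{2},\frac{1-i}{2}\big\}$ to itself, so each application converts an unknown segment integral of weight $w$ into another unknown segment integral of the same weight (its reflection or complex conjugate). No step in the scheme ever replaces the endpoint $\frac{1\pm i}{2}$ by a root of unity, so what you obtain are reality/symmetry constraints, not evaluations, and the induction on weight and depth has nothing to recurse to. By contrast, the paper needs none of this machinery: the single affine substitution $t\to\frac{1+i}{2}(1-t)$ carries the path from $0$ to $\frac{1+i}{2}$ onto the reversed interval $[0,1]$ and the forms $\frac{dt}{t},\ \frac{dt}{1-t}$ to $\frac{-dt}{1-t},\ \frac{dt}{i-t}$, giving
\begin{equation*}
\Li_\bfk\Big(\frac{1+i}{2}\Big)=(-1)^r \int_0^{1}\left(\frac{dt}{1-t}\right)^{k_1-1} \frac{dt}{i-t}\cdots \left(\frac{dt}{1-t}\right)^{k_r-1}\frac{dt}{i-t},
\end{equation*}
visibly a level-4 CMZV of weight $|\bfk|$, with $\Li_\bfk\big(\frac{1-i}{2}\big)$ handled by complex conjugation. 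In particular the regularization bookkeeping you flagged as the ``main obstacle'' never arises, since both the original and transformed integrals are convergent.
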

\begin{proof} We know that
$$
\Li_\bfk\Big(\frac{1+i}{2}\Big)=\int_0^{\text{$\scriptstyle \frac{1+i}{2}$}} \frac{dt}{1-t}\left(\frac{dt}{t}\right)^{k_r-1}\cdots \frac{dt}{1-t}\left(\frac{dt}{t}\right)^{k_1-1}
$$
by using Chen's iterated integral expression (see \cite[pp.~13-14]{Zhao2016} where the convention of the 1-form order
is opposite to the one used in this paper).
By the change of variables $t\to \frac{1+i}{2}(1-t)$ we see that $d\log t\to d\log(1-t)=\frac{-dt}{1-t}$ and
$$\frac{-dt}{1-t}=d\log(1-t)\to d\log \left(\frac{1-i}{2}+\frac{1+i}{2}t\right)=d\log (-i+t)=\frac{-dt}{i-t}.$$
Thus
\begin{equation*}
\Li_\bfk\Big(\frac{1+i}{2}\Big)=(-1)^r \int_0^{1}\left(\frac{dt}{1-t}\right)^{k_1-1} \frac{dt}{i-t}\cdots \left(\frac{dt}{1-t}\right)^{k_r-1}\frac{dt}{i-t}\in\CMZV_{|\bfk|}^4.
\end{equation*}
Taking complex conjugation we see that $\Li_\bfk\Big(\frac{1-i}{2}\Big)\in\CMZV_{|\bfk|}^4.$
\end{proof}

\section{Some special MtV variant of Ap\'{e}ry-type series}\label{sec-Apery-MtV}
In this section, by applying Thm.~\ref{thm-LP-Log-2} and using the Fourier--Legendre expansion of $K(x)$ we will derive explicit
evaluations of some MtV variant of Ap\'{e}ry-type series involving squares of central binomial coefficients.

In order to prove the next theorem, we will need the following result.

\begin{pro} \label{pro-MtV}
Let $\eta=\pm 1$. For all $m,l\in\Z$ with $l\ge 2$ and $m\ge 0$, we have
\begin{align*}
     \sum_{k=0}^m \sum_{n\ge 0} \eta^n \frac{\ze_n(1_{m-k})\ze_{n+1}^\star(1_k)}{(n+1)^l}
=&\, \sum_{d=1}^{m+1}\sum_{|\bfs|=m+2,s_1\ge2} 2^{d-1} \ze(s_1+l-2,s_2,\dots,s_d;\eta,1_{d-1}),\\
     \sum_{k=0}^m \sum_{n\ge 0} \eta^n \frac{t_n(1_{m-k}) t_{n+1}^\star(1_k)}{(2n+1)^l}
=&\, \sum_{d=1}^{m+1}\sum_{|\bfs|=m+2,s_1\ge2} 2^{d-1} t(s_1+l-2,s_2,\dots,s_d;\eta,1_{d-1}),
\end{align*}
where $\bfs=(s_1,\dots,s_d).$
\end{pro}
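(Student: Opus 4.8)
The plan is to prove both identities simultaneously by comparing the generating functions of the two sides in an auxiliary variable $z$ that records the parameter $m$; the two statements are formally identical once one replaces the weights $1/i$ by $1/(2i-1)$ and the indices $n_i$ by $2n_i-1$, so it suffices to carry out the $\ze$-version and transcribe. The engine is the pair of symmetric-function generating series
$$\sum_{j\ge 0}\ze_n(1_j)\,z^j=\prod_{i=1}^{n}\Big(1+\frac{z}{i}\Big),\qquad \sum_{k\ge 0}\ze_{n+1}^\star(1_k)\,z^k=\prod_{i=1}^{n+1}\Big(1-\frac{z}{i}\Big)^{-1},$$
valid because $\ze_n(1_j)$ is the elementary symmetric polynomial $e_j$ in $1,\tfrac12,\dots,\tfrac1n$ and $\ze_{n+1}^\star(1_k)$ is the complete homogeneous symmetric polynomial $h_k$ in $1,\tfrac12,\dots,\tfrac1{n+1}$; the $t$-analogues hold by the same token, the relevant Newton-type recursions being exactly \eqref{Eq-CF-Relat-mtv} and \eqref{Eq-mtsv-Relat}.

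First I would treat the left-hand side. Taking the Cauchy product of the two series above, the inner convolution becomes
$$\sum_{m\ge 0}\Big(\sum_{k=0}^m\ze_n(1_{m-k})\ze_{n+1}^\star(1_k)\Big)z^m=\prod_{i=1}^{n}\frac{i+z}{i}\cdot\prod_{i=1}^{n+1}\frac{i}{i-z}=\frac{n+1}{\,n+1-z\,}\prod_{i=1}^{n}\frac{i+z}{i-z}.$$
Multiplying by $\eta^n/(n+1)^l$ and summing over $n\ge 0$ shows that the generating function of the left-hand side equals $\sum_{n\ge0}\frac{\eta^n}{(n+1)^{l-1}(n+1-z)}\prod_{i=1}^{n}\frac{i+z}{i-z}$.

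Next I would expand the right-hand side. Writing each $\ze(s_1+l-2,s_2,\dots,s_d;\eta,1_{d-1})$ as its defining nested series, the exponent of $n_1$ is $s_1+l-2$; setting $s_1=s_1'+2$ with $s_1'\ge0$ so that $s_1'+s_2+\cdots+s_d=m$, the summation over all admissible $\bfs$ with $|\bfs|=m+2$ is precisely extraction of the $z^m$-coefficient of a product of geometric series, contributing $\frac{1}{1-z/n_1}\prod_{i=2}^{d}\frac{z/n_i}{1-z/n_i}$ for fixed $n_1>\dots>n_d$. Fixing the top index $n_1=N$ and summing over $d\ge1$ with weight $2^{d-1}$ and over $N>n_2>\dots>n_d\ge1$, the lower factors collapse through $\sum_{j\ge0}2^{j}e_j(w_1,\dots,w_{N-1})=\prod_{i=1}^{N-1}(1+2w_i)$ with $w_i=\tfrac{z/i}{1-z/i}$. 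The key algebraic identity $1+2w_i=\frac{i+z}{i-z}$ then gives $\prod_{i=1}^{N-1}\frac{i+z}{i-z}$, so the generating function of the right-hand side equals $\sum_{N\ge1}\frac{\eta^{N}}{N^{l-1}(N-z)}\prod_{i=1}^{N-1}\frac{i+z}{i-z}$. Reindexing $N=n+1$ matches this with the left-hand expression term by term, and comparing coefficients of $z^m$ proves the identity; the $t$-version is verbatim with $i\mapsto 2i-1$, $n_i\mapsto 2n_i-1$, and $\ze\mapsto t$.

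The step I expect to be the main obstacle is the right-hand collapse, i.e.\ recognizing the weighted elementary-symmetric sum $\sum_j 2^j e_j(w)=\prod_i(1+2w_i)$ together with the factorization $1+2w_i=\frac{i+z}{i-z}$; this is exactly what produces the combinatorial factor $2^{d-1}$ and ties the depth of the colored MZV to the product form. Two further points require care: justifying the formal manipulations (for $|z|$ small all series converge absolutely, and admissibility of every term on the right is guaranteed since $s_1+l-2\ge l\ge 2$), and the bookkeeping of the sign weight under the substitution $N=n+1$, where the weight $\eta^{n}=\eta^{N-1}$ on the left must be reconciled with the top-index weight $\eta^{n_1}=\eta^{N}$ on the right. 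This parity matching is the one place where the cases $\eta=\pm1$ must be tracked explicitly, and it should be checked directly on a small case (e.g.\ $m=0,\ l=2$) before assembling the general argument.
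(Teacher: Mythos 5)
Your proof takes a genuinely different route from the paper's. The paper argues by induction on $m$: it peels the largest index off the star sum via $\ze_{n+1}^\star(1_k)=\ze_{n+1}^\star(1_{k-1})/(n+1)+\ze_n^\star(1_k)$, reducing to the case $(m-1,l+1)$ plus the diagonal sum $\sum_{k}\ze_n(1_{m-k})\ze_n^\star(1_k)$, and then converts that diagonal sum into depth-weighted terms by a lemma in Hoffman's stuffle algebra, $\sum_{k=0}^m\sum_{|\bfr|=k}z_1^{m-k}*z_\bfr=\sum_{|\bfs|=m}2^{\dep(\bfs)}z_\bfs$. You replace both the induction and that combinatorial lemma with closed generating-function identities: the elementary/complete symmetric product formulas for $\ze_n(1_j)$ and $\ze_{n+1}^\star(1_k)$, and the collapse $\sum_j 2^je_j(w)=\prod_i(1+2w_i)$ with $1+2w_i=(i+z)/(i-z)$, which in your approach is exactly where the factor $2^{d-1}$ is born (it plays the role of $2^{\dep(\bfs)}$ in the paper's lemma). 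These computations are all correct, and they handle the $\ze$- and $t$-cases uniformly; this is arguably cleaner and more self-contained than the paper's argument.

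However, your concluding step --- ``reindexing $N=n+1$ matches this with the left-hand expression term by term'' --- is false when $\eta=-1$, and it is exactly the point you flagged and then postponed. Your two generating functions are
\begin{equation*}
\sum_{N\ge1}\frac{\eta^{N-1}}{N^{l-1}(N-z)}\prod_{i=1}^{N-1}\frac{i+z}{i-z}
\qquad\text{and}\qquad
\sum_{N\ge1}\frac{\eta^{N}}{N^{l-1}(N-z)}\prod_{i=1}^{N-1}\frac{i+z}{i-z},
\end{equation*}
which differ by the factor $\eta$. So what your computation actually proves is that the left-hand side of Prop.~\ref{pro-MtV} equals $\eta$ times its right-hand side, in both the $\ze$- and the $t$-version. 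The sanity check you yourself proposed ($m=0$, $l=2$, $\eta=-1$) confirms this: the left side is $\sum_{n\ge0}(-1)^n/(n+1)^2=\pi^2/12$, while the right side is $\ze(2;-1)=-\pi^2/12$; in the $t$-case one gets $G$ versus $t(\bar 2)=-G$.

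The obstruction is not in your method: the proposition as printed is incorrect for $\eta=-1$, and the identity with the extra factor $\eta$ is the true statement. The paper corroborates this implicitly: when Prop.~\ref{pro-MtV} is invoked with $\eta=-1$, $l=2$ in Thm.~\ref{thm-MR-Apery-AMtVs}, the conclusion there carries the compensating minus sign in $-\frac{2}{\pi}\sum 2^d\, t(\bfs;-1,1_{d-1})$, which is what your corrected identity predicts and what the stated proposition would contradict. (The paper's own inductive proof makes the same slip at its base case, asserting that both sides equal $f(l+1;\eta)+2f(l,1;\eta,1)$, whereas the left side equals $\eta$ times that quantity.) To finish, you should state and prove the identity with the factor $\eta$ made explicit --- your generating-function argument does this verbatim --- rather than asserting a term-by-term match that fails for $\eta=-1$.
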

\begin{proof} Fix any positive integer $n$. Then both $\ze_n$ and $t_n$ satisfy stuffle relations.
We now prove the claims by induction on $m$. If $m=1$ then both sides are
equal to $f(l+1;\eta)+2f(l,1;\eta,1)$ for $f=\ze$ or $f=t$. In general, we see that
\begin{align}\notag
&   \sum_{k=0}^m \sum_{n\ge 0}\eta^n \frac{\ze_n(1_{m-k})\ze_{n+1}^\star(1_k)}{(n+1)^l}\\
=&\,\sum_{k=1}^m \sum_{n\ge 0}\eta^n\frac{\ze_n(1_{m-k})\ze_{n+1}^\star(1_{k-1})}{(n+1)^{l+1}}
+\sum_{k=0}^m \sum_{n\ge 0}\eta^n \frac{\ze_n(1_{m-k})\ze_n^\star(1_k)}{(n+1)^l}\notag \\
=&\,  \sum_{d=1}^{m}\sum_{|\bfs|=m+1,s_1\ge2} 2^{d-1} \ze(s_1+l-1,s_2,\dots,s_d;\eta,1_{d-1})
+\sum_{k=0}^m \sum_{n\ge 0}\eta^n \frac{\ze_n(1_{m-k})\ze_n^\star(1_k)}{(n+1)^l} \label{equ:zetaVersion}
\end{align}
by induction. By the following lemma about words and noticing that
\begin{equation*}
  f_n^\star(1_k)=\sum_{j=1}^k \sum_{|\bfr|=k,\bfr\in\N^j} f_n(\bfr)
\end{equation*}
for $f=\ze$ or $f=t$, we see that
\begin{align*}
\eqref{equ:zetaVersion}=&\, \sum_{d=1}^{m}\sum_{\substack{|\bfs|=m+1\\  s_1\ge2,\bfs\in\N^d}} 2^{d-1} \ze(s_1+l-1,s_2,\dots,s_d;\eta,1_{d-1})
+\sum_{d=0}^m \sum_{\substack{|\bfs|=m\\ \bfs\in\N^d}} 2^d \ze(l,s_1,s_2,\dots,s_d;\eta,1_{d-1})\\
=&\,  \sum_{d=1}^{m+1}\sum_{\substack{|\bfs|=m+2\\  s_1\ge 3,\bfs\in\N^d}} 2^{d-1} \ze(s_1+l-2,s_2,\dots,s_d;\eta,1_{d-1}) \\
+&\, \sum_{d=1}^{m+1} \sum_{\substack{|\bfs|=m+2\\ s_1=2,\bfs\in\N^d}} 2^{d-1} \ze(s_1+l-2,s_2,\dots,s_d;\eta,1_{d-1}).
\end{align*}
Similar identity holds for $t$, too. The proposition follows quickly.
\end{proof}

\begin{lem} Let $({\frak A}^1,*)$ be Hoffman's $\Q$-algebra of words generated by $z_j (j\in\N)$ satisfying
the stuffle relations (see \cite{Hoffman2000}). For any $\bfs=(s_1,\dots,s_d)\in\N^d$ we put $z_\bfs=z_{s_1}z_{s_2}\dots z_{s_d}$. Then for all non-positive integer $m$ we have
\begin{equation*}
\left(\sum_{k=0}^m \sum_{q=0}^{k} \sum_{|\bfr|=k,\bfr\in\N^q} z_1^{m-k}*z_\bfr=\right)
\sum_{k=0}^m  \sum_{|\bfr|=k} z_1^{m-k}*z_\bfr= \sum_{|\bfs|=m} 2^{\dep(\bfs)} z_\bfs
\left(=\sum_{d=0}^m \sum_{|\bfs|=m,\bfs\in\N^d} 2^d z_\bfs\right).
\end{equation*}
\end{lem}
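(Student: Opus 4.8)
The plan is to prove the identity by a direct combinatorial analysis of the quasi-shuffle (stuffle) product, extracting the coefficient of each word $z_\bfs$ on both sides and showing it equals $2^{\dep(\bfs)}$. First I would recall the explicit description of $*$: for words $u,v$, the product $u*v$ is the sum over all \emph{interleave-and-merge} patterns, i.e.\ one interleaves the letters of $u$ and $v$ while preserving the internal order of each, and optionally merges an adjacent pair consisting of a letter $z_a$ from $u$ and a letter $z_b$ from $v$ into the single letter $z_{a+b}$. Applied to $u=z_1^{m-k}$ and $v=z_\bfr$ with $|\bfr|=k$, every resulting term is a word of weight $m$, since $*$ preserves total weight; this already forces the left factor to be exactly $z_1^{m-k}$ whenever $|\bfr|=k$, so the weight constraint $a+|\bfr|=m$ is automatic and no double counting across different $k$ occurs.

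Fix a composition $\bfs=(s_1,\dots,s_d)$ with $|\bfs|=m$. The key step is to set up a bijection between the stuffle terms producing $z_\bfs$ (ranging over all $k$ and all $\bfr$ with $|\bfr|=k$) and the \emph{type assignments} that label each position $j\in\{1,\dots,d\}$ by the origin of the letter $z_{s_j}$: type $A$ if it comes from a single $z_1$ of the left factor; type $B$ if it comes from a single letter of $z_\bfr$; type $C$ if it is a merge $z_1\,z_{r_i}\mapsto z_{1+r_i}$. From a type assignment I would recover $(m-k,\bfr)$ uniquely: the number of $z_1$'s in the left factor is the number of type-$A$ plus type-$C$ positions, while the entries of $\bfr$ are, read left to right, the value $s_j$ at each type-$B$ position and $s_j-1$ at each type-$C$ position; conversely, a term of $z_1^{m-k}*z_\bfr$ giving $z_\bfs$ records which output positions are left-only, right-only, or merged, hence determines the types. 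This is the bijection, so the coefficient of $z_\bfs$ on the left equals the number of admissible type assignments.

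It then remains to count admissible types position by position. A type-$A$ position forces $s_j=1$; a type-$C$ position forces $s_j-1\ge1$, i.e.\ $s_j\ge2$; a type-$B$ position is always allowed. Hence a position with $s_j=1$ admits exactly types $A$ and $B$, while a position with $s_j\ge2$ admits exactly types $B$ and $C$: in either case exactly two choices, made independently across the $d$ positions. Thus the coefficient of $z_\bfs$ is $2^{d}=2^{\dep(\bfs)}$, matching the right-hand side; summing over all $\bfs$ with $|\bfs|=m$ finishes the proof, with the base case $m=0$ (both sides equal the empty word $1$) fitting this count.

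The main obstacle is the bookkeeping of the bijection—verifying that each interleave-and-merge pattern is counted exactly once and that the type assignment is a genuine bijective invariant of the pair $\bigl(z_1^{m-k},\bfr\bigr)$ together with one pattern. Once this is pinned down, the ``exactly two choices per position'' tally is immediate and uniform. As a sanity check and alternative, one could instead induct on $m$, expanding $z_1^{m-k}*z_\bfr$ through the stuffle recursion and matching the depth-refined concatenation recursion $Q_m=\sum_{j=1}^{m}2\,z_j\,Q_{m-j}$ satisfied by the right-hand side; but threading the recursion through both variable-length factors is messier than the bijective argument, so I would present the latter.
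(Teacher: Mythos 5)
Your proof is correct, and it takes a genuinely different route from the paper's. The paper proceeds by induction on $m$: it expands $\sum_{k=0}^m\sum_{|\bfr|=k} z_1^{m-k}*z_\bfr$ via the recursive (leading-letter) definition of the stuffle product into three sums --- leading letter taken from $z_1^{m-k}$, from $z_\bfr$, or a merged letter $z_{1+r_1}$ --- then splits the middle sum into the cases $r_1=1$ and $r_1\ge2$ and re-indexes, obtaining $2\sum_{a=1}^m z_a\bigl(\sum_{k=0}^{m-a}\sum_{|\bfr|=k}z_1^{m-a-k}*z_\bfr\bigr)$, which the inductive hypothesis converts into the right-hand side; this is exactly the depth-refined recursion $Q_m=\sum_{j=1}^m 2\,z_jQ_{m-j}$ that you sketch, and discard as messier, in your final paragraph. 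Your chosen argument instead extracts the coefficient of each fixed word $z_\bfs$ by the $A/B/C$ type-assignment bijection, and the crucial point that makes it work --- the left factor $z_1^{m-k}$ has all letters equal, so a stuffle term is determined by the partition of the output positions into left-only, right-only, and merged, with the weight constraint $m-k=\#A+\#C$ holding automatically so that each type assignment determines $(k,\bfr)$ uniquely and no double counting occurs --- is handled correctly; the per-position dichotomy ($s_j=1$ admits types $A,B$; $s_j\ge2$ admits types $B,C$) then yields $2^{\dep(\bfs)}$ with no induction at all. The trade-off: the paper's induction is quicker to make fully rigorous because it uses only the algebraic recursion for $*$ and never invokes the explicit combinatorial model of the quasi-shuffle product, whereas your count is more illuminating, since it isolates the coefficient of every individual word and explains conceptually where the factor $2^{\dep(\bfs)}$ comes from, rather than producing the identity only in aggregate.
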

\begin{proof}
We proceed by induction on $m$.
When $m=0$ both sides of the equation in the lemma are empty word. Assume $m\ge 1$. For
any $\bfr\in\N^q$, by the recursive definition of the stuffle product, we have (after
setting $a=r_1$ and $\bfr=(r_2,\dots,r_q)$)
\begin{align*}
\sum_{k=0}^m \sum_{|\bfr|=k} z_1^{m-k}*z_\bfr
=&\, \sum_{k=0}^{m-1}\sum_{|\bfr|=k}  z_1(z_1^{m-1-k}*z_\bfr)
 +\sum_{a=1}^m \sum_{k=a}^{m} \sum_{|\bfr|=k-a} z_{a}(z_1^{m-k}*z_{\bfr})\\
 &\, +\sum_{a=1}^{m-1} \sum_{k=a}^{m-1} \sum_{|\bfr|=k-a} z_{1+a}(z_1^{m-1-k}*z_{\bfr}).
\end{align*}
Breaking the middle term into two cases (namely, $a=r_1=1$ and $a=r_1\ge 2$) and shifting
the indices $k$ and $a$ suitably, we easily get
\begin{align*}
&\, \sum_{k=0}^m \sum_{|\bfr|=k} z_1^{m-k}*z_\bfr
=\, 2\sum_{k=0}^{m-1}\sum_{|\bfr|=k}  z_1(z_1^{m-1-k}*z_\bfr)
 +2\sum_{a=2}^m \sum_{k=0}^{m-a} \sum_{|\bfr|=k} z_{a}(z_1^{m-a-k}*z_{\bfr})\\
=&\,2\sum_{a=1}^m \sum_{d=0}^{m-a} \sum_{|\bfs|=m-a} 2^{\dep(\bfs)} z_az_\bfs
\end{align*}
by inductive assumption. The lemma follows immediately.
\end{proof}

The following theorem and \eqref{ASASs1} can be regarded as explicit versions of \cite[Thm. 3]{Can-Aur2019}.

\begin{thm}\label{thm-MR-Apery-AMtVs}
For any $m\in\N_0$ we have
\begin{align}\label{MR-Apery-AMtVs}
\sum\limits_{n=0}^\infty \bigg[\frac1{4^n}\binn\bigg]^2 \frac1{(2n+1)^{m+1}}
=&\, \frac{4}{\pi}\sum_{k=0}^m \sum_{n=0}^\infty (-1)^n\frac{t_n(1_k)t_{n+1}^\star(1_{m-k})}{(2n+1)^2}\\
=&\, -\frac{2}{\pi} \sum_{d=1}^{m+1}\sum_{|\bfs|=m+2,\bfs\in\N^d, s_1\ge 2} 2^d t(\bfs;-1,1_{d-1})\label{thm-MR-Apery-AMtVs2} \\
\in&\,  \frac1{\pi} \AMtV_{m+2}\subset \frac1{\pi}\CMZV_{m+2}^4, \notag
\end{align}
where $\AMtV_w$ (resp. $\CMZV_w^N$) denotes the
$\Q$-span of all the alternating multiple $t$-values (resp. CMVZs of level $N$ and) of weight $w$.
\end{thm}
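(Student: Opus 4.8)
The plan is to evaluate the single integral
\[
I_m:=\int_0^1 K(x)\,\frac{\log^m(x)}{\sqrt{x}}\,dx
\]
in two different ways and compare, in exactly the spirit in which \eqref{ASASs1} was obtained by pairing $K(x)$ against $\log^m(x)$. The crux is that $K(x)$ has the two expansions recorded in \eqref{KEQ}: the power-series form manufactures the Apéry-type series on the left-hand side of \eqref{MR-Apery-AMtVs}, while the Fourier--Legendre form manufactures the multiple $t$-harmonic sums in the middle.

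First I would insert the power series $K(x)=\tfrac{\pi}{2}\sum_{n\ge0}\big[\tfrac1{4^n}\binn\big]^2 x^n$ and integrate term by term. Differentiating $\int_0^1 x^{s-1}\,dx=1/s$ exactly $m$ times at $s=n+1/2$ gives
\[
\int_0^1 x^{\,n-1/2}\log^m(x)\,dx=\frac{(-1)^m m!\,2^{m+1}}{(2n+1)^{m+1}},
\]
so that
\[
I_m=\frac{\pi}{2}(-1)^m m!\,2^{m+1}\sum_{n\ge0}\Big[\frac1{4^n}\binn\Big]^2\frac1{(2n+1)^{m+1}} .
\]
Next I would instead insert the Legendre form $K(x)=2\sum_{n\ge0}\frac{P_n(2x-1)}{2n+1}$ and integrate term by term, invoking the coefficient formula \eqref{Eq-FL-Finall} for $\int_0^1 P_n(2x-1)\log^m(x)/\sqrt{x}\,dx$. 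This produces
\[
I_m=2(-1)^m m!\,2^{m+1}\sum_{k=0}^m\sum_{n\ge0}(-1)^n\frac{t_n(1_k)\,t_{n+1}^\star(1_{m-k})}{(2n+1)^2}.
\]
Equating the two expressions and cancelling the common factor $(-1)^m m!\,2^{m+1}$ yields the first equality \eqref{MR-Apery-AMtVs}.

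For the closed form \eqref{thm-MR-Apery-AMtVs2} I would reindex $k\mapsto m-k$ in the middle double sum so that it matches the shape in Prop.~\ref{pro-MtV}, and then apply that proposition with $\eta=-1$ and $l=2$. Because $l=2$ makes the leading entry $s_1+l-2=s_1$, every resulting term $t(\bfs;-1,1_{d-1})$ with $|\bfs|=m+2$ and $s_1\ge2$ is an admissible alternating multiple $t$-value of weight $m+2$; this places the sum in $\AMtV_{m+2}$. The final inclusion $\AMtV_{m+2}\subset\CMZV_{m+2}^4$ is the weight-preserving level-$4$ expression of alternating MtVs already noted in the introduction.

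The main obstacle is twofold and essentially bookkeeping. First, I must justify the two term-by-term integrations: the integrand $K(x)\log^m(x)/\sqrt{x}$ is integrable on $(0,1)$ — near $x=0$ the factor $\log^m(x)/\sqrt{x}$ is integrable while $K(0)=\pi/2$, and near $x=1$ the logarithmic blow-up of $K$ is tamed by $\log^m(x)\to0$ — so dominated convergence legitimizes interchanging sum and integral for both the uniformly convergent Legendre series and the power series. Second, and more delicately, I must track the sign when recasting the $(-1)^n$-weighted sum as alternating MtVs: since the star sum carries index $n+1$ and $2n+1=2(n+1)-1$, the alternating sign lands on the \emph{top} summation index $n_1=n+1$, so the shift $n\mapsto n+1$ contributes an extra factor $-1$. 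This is precisely what converts the $+2/\pi$ one would write naively into the $-2/\pi$ appearing in \eqref{thm-MR-Apery-AMtVs2}.
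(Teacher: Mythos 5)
Your proposal is correct and follows essentially the same route as the paper's proof: evaluate $\int_0^1 K(x)\log^m(x)\,x^{-1/2}\,dx$ once through the power-series half of \eqref{KEQ} and once through its Fourier--Legendre half combined with \eqref{Eq-FL-Finall}, then pass to \eqref{thm-MR-Apery-AMtVs2} via Prop.~\ref{pro-MtV} with $\eta=-1$, $l=2$. Your closing sign remark is in fact a necessary piece of care that the paper's one-line citation of Prop.~\ref{pro-MtV} glosses over: with the convention that the bar in $t(\bar s_1,s_2,\dots)$ places $(-1)^{n_1}$ on the summation index $n_1$, a literal application of that proposition would give $+\frac{2}{\pi}$, and it is precisely your extra factor $-1$ from the shift $n\mapsto n_1=n+1$ that produces the correct $-\frac{2}{\pi}$ in \eqref{thm-MR-Apery-AMtVs2} (consistent with $t(\bar 2)=-G$ and the evaluation \eqref{4GI}).
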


\begin{proof}
We consider the integral
\begin{align}\label{Eq-K-Log}
\int_0^1 K(x)\frac{\log^m(x)}{\sqrt{x}}dx&=2^{m+1} \int_0^1 K(x^2)\log^m(x)dx=2^{m}\pi \sum_{n=0}^\infty \bigg[\frac1{4^n}\binn\bigg]^2 \int_0^1 x^{2n} \log^m(x)dx\nonumber\\
&=2^mm!(-1)^m\pi \sum_{n=0}^\infty \bigg[\frac1{4^n}\binn\bigg]^2 \frac1{(2n+1)^{m+1}}\nonumber\\
&=2\sum_{n=0}^\infty \frac1{2n+1}\int_0^1 P_n(2x-1)\frac{\log^m(x)}{\sqrt{x}}dx\nonumber\\
&=2^{m+2}m!(-1)^m\sum_{k=0}^m \sum_{n=0}^\infty (-1)^n\frac{t_n(1_k)t_{n+1}^\star(1_{m-k})}{(2n+1)^2},
\end{align}
where we have used \eqref{Eq-FL-Finall} and \eqref{KEQ}.
Hence, we can prove \eqref{MR-Apery-AMtVs} by a straight-forward calculation. Finally, \eqref{thm-MR-Apery-AMtVs2}
follows immediately from Prop.\ \ref{pro-MtV}.
\end{proof}

\begin{exa}
Observe that for any $\bfs\in \N^d$, we have the explicit expression in $\CMZV^4$
\begin{equation*}
2^d t(\bfs;-1,1_{d-1})=i\sum_{\eta_1=\pm 1}\cdots \sum_{\eta_d=\pm 1} \eta_1\cdots \eta_d \Li_\bfs(i\eta_1,\eta_2,\dots, \eta_d).
\end{equation*}
For example,
\begin{align}
\sum\limits_{n=0}^\infty \bigg[\frac1{4^n}\binn\bigg]^2 \frac1{2n+1}&=-\frac{4t(\bar 2)}{\pi}=\frac{4G}{\pi}\approx 1.166243616,\label{4GI}\\
\sum\limits_{n=0}^\infty \bigg[\frac1{4^n}\binn\bigg]^2 \frac1{(2n+1)^2}&=-\frac{4}{\pi}\Big(t(\bar3)+2t(\bar 2,1)\Big) \notag \\
&=\frac{3\pi^2}{8}+\frac{\log^2(2)}{2}-\frac{16}{\pi}{\rm Im}\Li_3\left(\frac{1+i}{2}\right)\approx  1.037947765, \label{4GIm=2}\\
\sum\limits_{n=0}^\infty \bigg[\frac1{4^n}\binn\bigg]^2 \frac1{(2n+1)^3}
&=-\frac{4}{\pi}\Big(t(\bar4)+2t(\bar3,1)+2t(\bar 2,2)+4t(\bar2,1,1)\Big) \notag\\
&=\frac34 \pi^2 \log 2+\frac13\log^3(2)+\frac{64}{\pi}{\rm Im}\Li_4\left(\frac{1+i}{2}\right)-\frac{48}{\pi}G\approx 1.010879510,  \notag
\end{align}
where $G$ is Catalan's constant as before and $G(4):=\sum_{n=0}^\infty \frac{(-1)^n}{(2n+1)^4}$.
\end{exa}

\begin{thm}\label{thm-Apery-MRV}
For any $m\in\N_0$ and $k\in\N$, we have
\begin{align}\label{Eq-MtHS-CB}
\sum\limits_{n=1}^\infty \frac{1}{4^n}\binn\frac{t_n(1_k)}{n^{m+1}}=\frac1{2^k}R(k+1,1_m)\in\Q[\log 2,\ze(2),\ze(3),\ze(4),\ldots].
\end{align}
\end{thm}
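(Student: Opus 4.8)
The plan is to convert both sides into definite integrals over $(0,1)$ and then identify them by elementary substitutions. Write $S:=\sum_{n\ge1}\frac{1}{4^n}\binn t_n(1_k)/n^{m+1}$ for the left-hand side. First I would invoke \eqref{Eq-Sum-MHS}, which gives $\frac{1}{4^n}\binn t_n(1_k)=2^{-k}a_n(k)$, so that $2^kS=\sum_{n\ge1}a_n(k)/n^{m+1}$. Next I would feed in the generating function \eqref{Eq-GF-Sums}, namely $\sum_{n\ge1}a_n(k)x^n=\frac{(-1)^k}{k!}\log^k(1-x)/\sqrt{1-x}$, together with the elementary Mellin integral $\int_0^1 x^{n-1}\log^m(x)\,dx=(-1)^m m!/n^{m+1}$. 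Interchanging summation and integration would yield
\begin{equation*}
2^kS=\frac{(-1)^{m+k}}{m!\,k!}\int_0^1 \frac{\log^m(x)\,\log^k(1-x)}{x\sqrt{1-x}}\,dx .
\end{equation*}
The integrand is integrable on $(0,1)$: near $x=0$ the factor $\log^k(1-x)$ vanishes to order $k\ge1$ and cancels the $1/x$, while near $x=1$ the singularity $1/\sqrt{1-x}$ is integrable and the logarithms contribute only mildly.

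For the right-hand side, I would first collapse the multiple $R$-value into a single series. Factoring the powers of $2$ in the definition \eqref{Def-MRV} and summing the inner indices $n_2>\cdots>n_{m+1}$ gives $\ze_{n-1}(1_m)$, whence
\begin{equation*}
R(k+1,1_m)=2^{k+1}\sum_{n\ge1}\frac{\ze_{n-1}(1_m)}{(2n-1)^{k+1}} .
\end{equation*}
Then, using $\frac{1}{(2n-1)^{k+1}}=\frac{(-1)^k}{k!}\int_0^1 x^{2n-2}\log^k(x)\,dx$ and the generating function $\sum_{n\ge1}\ze_{n-1}(1_m)y^n=y(-\log(1-y))^m/\big(m!(1-y)\big)$ evaluated at $y=x^2$, a second interchange of $\sum$ and $\int$ would produce
\begin{equation*}
R(k+1,1_m)=\frac{2^{k+1}(-1)^{k+m}}{k!\,m!}\int_0^1 \frac{\log^k(x)\,\log^m(1-x^2)}{1-x^2}\,dx .
\end{equation*}

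To finish, I would match the two integrals. In the $R$-integral the substitution $x^2=u$ introduces a factor $2^{-k-1}$ and turns it into $2^{-k-1}\int_0^1 \log^k(u)\log^m(1-u)/\big((1-u)\sqrt{u}\big)\,du$; the further substitution $u\mapsto 1-u$ then maps this exactly onto the integral appearing in the formula for $2^kS$. Since $R(k+1,1_m)=\frac{2^{k+1}(-1)^{k+m}}{k!\,m!}$ times its integral, the powers of $2$ cancel and we obtain $R(k+1,1_m)=2^kS$, i.e. $S=2^{-k}R(k+1,1_m)$. The concluding membership $R(k+1,1_m)\in\Q[\log2,\ze(2),\ze(3),\ldots]$ is then precisely \eqref{Eq-MRV-H1} (with the index dictionary $m\mapsto k$, $n\mapsto m+1$). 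The only genuine work beyond bookkeeping is justifying the two term-by-term integrations: for $m\ge1$ this is immediate from the $\log^m$ weight, and for $m=0$ it follows from Abel summation combined with dominated convergence near the endpoints. The matching substitutions themselves are a one-line computation, so I expect the analytic interchange (rather than any algebraic identity) to be the only point requiring care.
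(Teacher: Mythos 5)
Your proof is correct, and its first half is identical to the paper's: both convert the left-hand side, via the generating function \eqref{Eq-Sum-MHS}--\eqref{Eq-GF-Sums} (equivalently \eqref{Eq-GF-Log-Sqrt1-x}), into the integral $\frac{(-1)^{m+k}}{2^k m!\,k!}\int_0^1 \frac{\log^m(x)\log^k(1-x)}{x\sqrt{1-x}}\,dx$, which is exactly the paper's \eqref{Eq-Inte-Log-Sqrt1-x}. Where you genuinely differ is the treatment of the right-hand side: the paper simply quotes the first author's integral representation \eqref{Eq-Inte-MRV-H1} from \cite{Xu2021}, which in the theorem's indices reads $R(k+1,1_m)=\frac{(-1)^{k+m}}{k!\,m!}\int_0^1 \frac{\log^k(t)\log^m(1-t)}{1-t}\,t^{-1/2}dt$, and matches it to \eqref{Eq-Inte-Log-Sqrt1-x} by the single substitution $x\to 1-x$; you instead re-derive that representation from the definition \eqref{Def-MRV}, by collapsing the inner summation indices to $\ze_{n-1}(1_m)$, inserting the Mellin integral for $(2n-1)^{-k-1}$ together with the generating function $\sum_{n\ge 1}\ze_{n-1}(1_m)y^n=\frac{y(-\log(1-y))^m}{m!(1-y)}$, and then applying $x^2=u$ followed by $u\to 1-u$. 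In effect you have supplied a self-contained proof of \eqref{Eq-Inte-MRV-H1}, which the paper outsources; this costs a few extra lines but removes one external dependency, and all your intermediate identities check out (including the power-of-two bookkeeping: the factor $2^{k+1}$ from \eqref{Def-MRV} is exactly cancelled by the $2^{-k-1}$ produced by $x^2=u$). Note that both arguments still rest on \eqref{Eq-MRV-H1} for the final membership in $\Q[\log 2,\ze(2),\ze(3),\ldots]$, so neither is fully independent of \cite{Xu2021}. Your justification of the two series--integral interchanges could be stated more simply than via Abel summation: since $t_n(1_k)\ge 0$ and $\ze_{n-1}(1_m)\ge 0$ while $\log^m(x)$ and $\log^k(x)$ have constant sign on $(0,1)$, all terms have a fixed sign and Tonelli's theorem applies at once, for $m=0$ as well.
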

\begin{proof}
From \eqref{Eq-GF-Sums} and \eqref{Eq-CF-Finally}, we deduce
\begin{align}\label{Eq-GF-Log-Sqrt1-x}
\frac{1}{2^k\sqrt{1-x}}\int_0^x\left(\frac{dt}{1-t}\right)^k =\frac{(-1)^k}{2^kk!}\frac{\log^k(1-x)}{\sqrt{1-x}}=\sum_{n=1}^\infty \frac{1}{4^n}\binn t_n(1_k)x^n.
\end{align}
If $k=0$ we need to modify this as follows:
\begin{align}\label{Eq-GF-Log-Sqrt1-x0}
\frac{1}{\sqrt{1-x}}-1=\sum_{n=1}^\infty \frac{1}{4^n}\binn x^n.
\end{align}
Multiplying \eqref{Eq-GF-Log-Sqrt1-x} by $\frac{\log^m(x)}{x}$ and integrating over $(0,1)$ yields
\begin{align}\label{Eq-Inte-Log-Sqrt1-x}
\sum\limits_{n=1}^\infty \frac{1}{4^n}\binn\frac{t_n(1_k)}{n^{m+1}}=\frac{(-1)^{m+k}}{2^km!k!}\int_0^1 \frac{\log^m(x)\log^k(1-x)}{x\sqrt{1-x}}dx.
\end{align}
In \cite{Xu2021}, the first author proved that
\begin{align}\label{Eq-Inte-MRV-H1}
{R}(m+1,\underbrace{1,\ldots,1}_{n-1})=\frac{(-1)^{m+n-1}}{m!(n-1)!}\int_{0}^1 \frac{\log^m(t)\log^{n-1}(1-t)}{1-t}t^{-1/2}dt.
\end{align}
Hence, applying the change of variables $x\to 1-x$ in the integral on the right-hand side of \eqref{Eq-Inte-Log-Sqrt1-x} and using \eqref{Eq-Inte-MRV-H1}, we obtain the desired formula
\begin{align*}
\sum\limits_{n=1}^\infty \frac{1}{4^n}\binn\frac{t_n(1_k)}{n^{m+1}}=\frac1{2^k}R(k+1,1_m).
\end{align*}
We can now finish the proof of Thm.~\ref{thm-Apery-MRV} by applying \eqref{Eq-MRV-H1}.
\end{proof}

\begin{exa} Taking $m\le 2$, we get
\begin{align*}
&\sum\limits_{n=1}^\infty \frac{1}{4^n}\binn\frac{t_n(1_k)}{n}=\frac{2^{k+1}-1}{2^k}\ze(k+1),\\
&\sum\limits_{n=1}^\infty \frac{1}{4^n}\binn\frac{t_n(1)}{n^2}=\frac{7}{2}\ze(3)-3\ze(2)\log 2,\\
&\sum\limits_{n=1}^\infty \frac{1}{4^n}\binn\frac{t_n(1,1)}{n^2}=\frac{45}{16}\ze(4)-\frac7{2}\ze(3)\log 2,\\
&\sum\limits_{n=1}^\infty \frac{1}{4^n}\binn\frac{t_n(1)}{n^3}=\frac{15}{4}\ze(4)+3\log^2(2)\ze(2)-7\ze(3)\log 2.
\end{align*}
In fact, from \cite[Thm. 3.4]{WX2021} we know that the ``$\in$'' part (but not the ``='' part as $R(1,\dots)$ is undefined) of the Thm.~\ref{thm-Apery-MRV} holds for $k=0$, too.
\end{exa}

\section{Some special product variant of Ap\'{e}ry-type series}\label{sec-Apery-MtSV}
In this section, we use the idea similar to the one in section \ref{sec-Apery} to derive explicit
evaluations of some product variants of Ap\'{e}ry-type series involving the central binomial coefficients.
By product variant we mean products of multiple harmonic sums and multiple $t$-harmonic sums (or
their star version) appear in the terms of such series.

\begin{thm}\label{thm-Apery-MtV-Mhs-CB}
For any $m,k\in\N$ we have
\begin{align}\label{Eq-Apery-MtV-Mhs-CB}
\sum_{n=1}^\infty  \frac{4^n}{\binn} \frac{\ze_{n-1}(1_{m-1})t^\star_n(1_k)}{n^2}&=\sum_{n=1}^\infty \frac{1}{4^n}\binn\frac{t_n(1_k)\ze^\star_n(1_m)}{n}\nonumber\\
&=\binom{m+k}{k}\frac{2^{m+k+1}-1}{2^k}\ze(m+k+1),
\end{align}
where if $m=0$, the second equal sign also holds.
\end{thm}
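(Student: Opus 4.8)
The plan is to show that \emph{both} sums in \eqref{Eq-Apery-MtV-Mhs-CB} reduce to one and the same integral, namely
\[
J:=\int_0^1\frac{\log^{m+k}(x)}{(1-x)\sqrt{x}}\,dx,
\]
each multiplied by the common constant $\frac{(-1)^{m+k}}{2^k m!\,k!}$. This will give the first equality (left $=$ right) automatically, and the closed form once $J$ is evaluated. I would compute $J$ by expanding $\frac{1}{1-x}=\sum_{j\ge0}x^j$ and using $\int_0^1 x^{j-1/2}\log^N(x)\,dx=(-1)^N N!\,2^{N+1}/(2j+1)^{N+1}$, so that (with $N=m+k$)
\[
J=(-1)^{m+k}(m+k)!\,2^{m+k+1}\sum_{j\ge0}\frac{1}{(2j+1)^{m+k+1}}=(-1)^{m+k}(m+k)!\,(2^{m+k+1}-1)\,\ze(m+k+1),
\]
via $\sum_{j\ge0}(2j+1)^{-s}=(1-2^{-s})\ze(s)$. (Convergence of $J$ needs $m+k\ge1$, which holds since $k\in\N$.)

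For the middle sum I would first rewrite $\ze_n^\star(1_m)/n$ by \eqref{Eq-x-n-1-Log1-x-IT} as $\frac{(-1)^m}{m!}\int_0^1 x^{n-1}\log^m(1-x)\,dx$. Substituting this, interchanging sum and integral (justified by Tonelli once the sign $(-1)^m$ is extracted, since all remaining factors are nonnegative on $(0,1)$), and summing the resulting series by the generating function \eqref{Eq-GF-Log-Sqrt1-x}, the $x$-series collapses to $\frac{(-1)^k}{2^k k!}\log^k(1-x)/\sqrt{1-x}$. This yields $\frac{(-1)^{m+k}}{2^k m!\,k!}\int_0^1\frac{\log^{m+k}(1-x)}{x\sqrt{1-x}}\,dx$, and the change of variables $x\mapsto 1-x$ turns that integral into $J$.

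For the left-hand sum the key step is an integral representation for $\frac{4^n}{n\binn}t_n^\star(1_k)$. I would obtain it from \eqref{Eq-mtsv-beta-Finall} by shifting $n\mapsto n-1$ (i.e.\ taking $\beta=n$) and simplifying $\frac{4^{n-1}}{(2n-1)\binom{2n-2}{n-1}}$ with the elementary identity $\binom{2n-2}{n-1}=\frac{n}{2(2n-1)}\binn$, which gives
\[
\frac{4^n}{n\binn}t_n^\star(1_k)=\frac{(-1)^k}{2^k k!}\int_0^1 x^{-1/2}(1-x)^{n-1}\log^k(x)\,dx.
\]
Inserting this, interchanging sum and integral (Tonelli again), factoring one power $\frac{1}{1-x}$ out of $(1-x)^{n-1}$, and summing the rest by $\sum_{n\ge1}\frac{\ze_{n-1}(1_{m-1})}{n}(1-x)^n=\frac{(-1)^m}{m!}\log^m(x)$ — which is \eqref{Eq-GF-Log-k} evaluated at argument $1-(1-x)=x$ — the left-hand sum collapses to $\frac{(-1)^{m+k}}{2^k m!\,k!}\int_0^1\frac{\log^{m+k}(x)}{(1-x)\sqrt{x}}\,dx=\frac{(-1)^{m+k}}{2^k m!\,k!}J$.

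Combining the three paragraphs, both sums equal $\frac{(-1)^{m+k}}{2^k m!\,k!}J=\binom{m+k}{k}\frac{2^{m+k+1}-1}{2^k}\ze(m+k+1)$, which is exactly \eqref{Eq-Apery-MtV-Mhs-CB}. When $m=0$ the left-hand sum and the symbol $\ze_{n-1}(1_{m-1})$ are undefined, but the middle-sum computation goes through verbatim with $\ze_n^\star(1_0)=1$, so only the second equality is asserted, matching the statement. The main obstacle I anticipate is the left-hand integral representation: the index shift in \eqref{Eq-mtsv-beta-Finall} together with the binomial bookkeeping must be handled carefully (and the $n=1$ boundary case checked), whereas the two interchanges of summation and integration are routine once the signs are extracted so that Tonelli applies.
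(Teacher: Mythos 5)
Your proof is correct and is essentially the paper's own argument: both reduce the two sums, via the generating functions \eqref{Eq-GF-Log-Sqrt1-x} and \eqref{Eq-GF-Log-k} together with the Beta-type integral identities \eqref{Eq-x-n-1-Log1-x-IT} and \eqref{Eq-mtsv-beta-Finall} (your representation $\frac{4^n}{n\binn}t_n^\star(1_k)=\frac{(-1)^k}{2^kk!}\int_0^1 x^{-1/2}(1-x)^{n-1}\log^k(x)\,dx$ is exactly the paper's \eqref{Eq-n-k-Log-Sqrtx} after $x\mapsto 1-x$, and your index shift and binomial identity $\binom{2n-2}{n-1}=\frac{n}{2(2n-1)}\binn$ check out, including the $n=1$ boundary case), to the single common integral $\int_0^1 \frac{\log^{m+k}(1-x)}{x\sqrt{1-x}}\,dx$, which is your $J$. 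The only difference is cosmetic and makes the argument slightly more self-contained: you evaluate $J$ directly by expanding $\frac{1}{1-x}$ and using $\sum_{j\ge 0}(2j+1)^{-s}=(1-2^{-s})\ze(s)$, whereas the paper quotes the $R$-value identities \eqref{Eq-Inte-MRV-H1} and $R(s)=(2^s-1)\ze(s)$ from \cite{Xu2021} — the same computation in disguise.
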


\begin{proof}
Applying \eqref{Eq-mtsv-beta-Finall} and noting that $B(\alpha,\beta)=B(\beta,\alpha)$, by a simple calculation, we have
\begin{align}\label{Eq-n-k-Log-Sqrtx}
\int_0^1 x^{n-1}\frac{\log^k(1-x)}{\sqrt{1-x}}dx=(-1)^kk!2^k \frac{4^n}{n\binn}t_n^\star(1_k).
\end{align}
Applying \eqref{Eq-x-n-1-Log1-x-IT}, \eqref{Eq-GF-Log-k}, \eqref{Eq-GF-Log-Sqrt1-x}, \eqref{Eq-Inte-MRV-H1} and \eqref{Eq-n-k-Log-Sqrtx}, we may compute the integral
\begin{align}\label{Eq-m-k-x-Sqrtx}
\int_0^1 \frac{\log^{m+k}(1-x)}{x\sqrt{1-x}}dx&=(-1)^{k+m}(m+k)!R(m+k+1)\nonumber\\&=(-1)^kk!2^k\sum_{n=1}^\infty \frac{1}{4^n}\binn t_n(1_k)\int_0^1 x^{n-1}\log^m(1-x)dx\nonumber\\
&=(-1)^{k+m}k!m!2^k \sum_{n=1}^\infty \frac{t_n(1_k)\ze^\star_n(1_m)}{n4^n}\binn\nonumber\\
&=(-1)^mm!\sum_{n=1}^\infty \frac{\ze_{n-1}(1_{m-1})}{n}\int_0^1 x^{n-1}\frac{\log^k(1-x)}{\sqrt{1-x}}dx\nonumber\\
&=(-1)^{k+m}k!m!2^k \sum_{n=1}^\infty \frac{\ze_{n-1}(1_{m-1})t^\star_n(1_k)}{n^2\binn}4^n.
\end{align}
Finally, we may use $R(k)=(2^k-1)\ze(k)\ (k>1)$ to get the desired formula.
\end{proof}

\begin{exa} Taking $m\le 2$ and $k\le 2$, we get
\begin{align*}
&\sum_{n=1}^\infty  \frac{4^n}{\binn}  \frac{t^\star_n(1)}{n^2}=\sum_{n=1}^\infty \frac{t_n(1)\ze^\star_n(1)}{n4^n}\binn=7\ze(3),\\
&\sum_{n=1}^\infty  \frac{4^n}{\binn} \frac{t^\star_n(1,1)}{n^2}=\sum_{n=1}^\infty \frac{t_n(1,1)\ze^\star_n(1)}{n4^n}\binn=\frac{45}{4}\ze(4),\\
&\sum_{n=1}^\infty \frac{4^n}{\binn}  \frac{\ze_{n-1}(1)t^\star_n(1)}{n^2}=\sum_{n=1}^\infty \frac{t_n(1)\ze^\star_n(1,1)}{n4^n}\binn=\frac{45}{2}\ze(4).
\end{align*}
\end{exa}

Furthermore, from \eqref{Eq-Apery-MtV-Mhs-CB}, changing $(m,k)$ to $(k,m)$, we obtain the duality relation
\begin{align}\label{Eq-Apery-MtV-Mhs-CB-dual}
&\, 2^k\sum_{n=1}^\infty  \frac{4^n}{\binn} \frac{\ze_{n-1}(1_{m-1})t^\star_n(1_k)}{n^2}=2^m\sum_{n=1}^\infty \frac{4^n}{\binn} \frac{\ze_{n-1}(1_{k-1})t^\star_n(1_m)}{n^2}\nonumber\\
=&\, 2^k\sum_{n=1}^\infty  \frac1{4^n}\binn \frac{t_n(1_k)\ze^\star_n(1_m)}{n} =2^m\sum_{n=1}^\infty  \frac1{4^n}\binn \frac{t_n(1_m)\ze^\star_n(1_k)}{n} .
\end{align}

\begin{thm}\label{thm-Apery-Mtshs-CB} For any $m\in\N_0$ we have
\begin{align}\label{Eq-Apery-Mtshs-CB}
\sum_{n=0}^\infty \frac{1}{4^n}\binn\frac{t_{n+1}^\star(1_m)}{2n+1}
=&\,\frac4{\pi}\sum_{k=0}^m \sum_{n=0}^\infty \frac{t_n(1_k)t_{n+1}^\star(1_{m-k})}{(2n+1)^2} \\
=&\, \frac4{\pi} \sum_{d=1}^{m+1}\sum_{\substack{\bfs=(s_1,\dots,s_d)\in\N^d \\ |\bfs|=m+2,s_1\ge 2}} 2^{d-1} t(\bfs)\in \frac1{\pi} \MtV_{m+2}\subset \frac1{\pi}\CMZV_{m+2}^2. \label{Eq-Apery-Mtshs-CB2}
\end{align}
\end{thm}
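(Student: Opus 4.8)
The plan is to imitate the proof of Thm.~\ref{thm-MR-Apery-AMtVs}, but with the argument of the elliptic integral reflected. Concretely, I would evaluate the single integral
\[
I_m:=\int_0^1 K(1-x)\frac{\log^m(x)}{\sqrt{x}}\,dx
\]
in two different ways and then equate the results. The decisive new idea, compared with Thm.~\ref{thm-MR-Apery-AMtVs}, is to use $K(1-x)$ instead of $K(x)$: since $P_n(1-2x)=P_n(-(2x-1))=(-1)^nP_n(2x-1)$ by \eqref{equ:LegendrePoly}, the Fourier--Legendre expansion \eqref{KEQ} yields $K(1-x)=2\sum_{n\ge0}(-1)^nP_n(2x-1)/(2n+1)$, and this extra factor $(-1)^n$ will cancel the sign $(-1)^n$ coming from \eqref{Eq-FL-Finall}, producing a \emph{non}-alternating double sum (whence multiple $t$-values rather than alternating ones).

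For the first evaluation I would insert the Legendre expansion of $K(1-x)$ into $I_m$ and integrate termwise, using \eqref{Eq-FL-Finall}:
\[
I_m=2\sum_{n\ge0}\frac{(-1)^n}{2n+1}\int_0^1 P_n(2x-1)\frac{\log^m(x)}{\sqrt{x}}\,dx
=(-1)^m m!\,2^{m+2}\sum_{k=0}^m\sum_{n\ge0}\frac{t_n(1_k)t_{n+1}^\star(1_{m-k})}{(2n+1)^2},
\]
the signs collapsing because $(-1)^n(-1)^{n+m}=(-1)^m$. For the second evaluation I would instead expand $K(1-x)=\tfrac{\pi}{2}\sum_{n\ge0}\big[\tfrac1{4^n}\binn\big]^2(1-x)^n$ by \eqref{KEQ} and integrate termwise; by \eqref{Eq-Beta-Integral} and \eqref{Eq-mtsv-beta-Finall} each term equals $\int_0^1 x^{-1/2}(1-x)^n\log^m(x)\,dx=(-1)^mm!\,2^{m+1}\,4^n\,t_{n+1}^\star(1_m)/\big((2n+1)\binn\big)$, and the elementary identity $\big[\tfrac1{4^n}\binn\big]^2\cdot 4^n/\binn=\tfrac1{4^n}\binn$ reduces the squared binomial to a single one, giving $I_m=\pi(-1)^mm!\,2^{m}\sum_{n\ge0}\tfrac1{4^n}\binn\,t_{n+1}^\star(1_m)/(2n+1)$. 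Equating the two expressions for $I_m$ and cancelling the common factor $(-1)^mm!\,2^m$ yields the first equality of the theorem. The closed form in \eqref{Eq-Apery-Mtshs-CB2} then follows at once from Prop.~\ref{pro-MtV} with $\eta=1$ and $l=2$ (after reindexing $k\mapsto m-k$ and noting $t(\bfs;1,1_{d-1})=t(\bfs)$), while the membership $\MtV_{m+2}\subset\CMZV_{m+2}^2$ holds because the odd-index restriction defining the multiple $t$-values is produced by level-$2$ one-forms with poles at $0,\pm1$.

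The routine ingredients (Fa\`a di Bruno and Beta-function computations) are already packaged in \eqref{Eq-FL-Finall}, \eqref{Eq-Beta-Integral} and \eqref{Eq-mtsv-beta-Finall}, so the only genuine analytic point is the justification of the two termwise integrations: $\log^m(x)/\sqrt{x}$ is singular at $x=0$ and $K(1-x)$ has a logarithmic singularity there as well. I expect this interchange of summation and integration to be the main (though standard) obstacle; it can be handled by observing that $\int_0^1 x^{-1/2}(1-x)^n|\log x|^m\,dx$ is summable against $\big[\tfrac1{4^n}\binn\big]^2$ (dominated convergence), exactly as in Thm.~\ref{thm-MR-Apery-AMtVs}, together with the convergence of the Fourier--Legendre series of $\log^m(x)/\sqrt{x}$ furnished by Thm.~\ref{thm-LP-Log-2}.
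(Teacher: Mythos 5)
Your proposal is correct and takes essentially the same route as the paper's own proof: both evaluate $\int_0^1 K(1-x)\log^m(x)x^{-1/2}\,dx$ once via the Fourier--Legendre expansion of $K(1-x)$ together with \eqref{Eq-FL-Finall}, and once via the power-series expansion of $K$ (the paper substitutes $x\to 1-x$ and invokes \eqref{Eq-n-k-Log-Sqrtx}, which is just your combination of \eqref{Eq-Beta-Integral} and \eqref{Eq-mtsv-beta-Finall} in disguise), then both conclude by Prop.~\ref{pro-MtV} with $l=2$ and $\eta=1$. Your added remarks on justifying the termwise integration are a point the paper leaves implicit, but they do not change the structure of the argument.
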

\begin{proof}
Applying $x\to 1-x$ in \eqref{KEQ} and using $P_n(-x)=(-1)^nP_n(x)$ we get
\begin{align}\label{KEQ-1}
K(1-x)=\frac{\pi}{2}\sum\limits_{n=0}^\infty \bigg[\frac1{4^n}\binn\bigg]^2(1-x)^n=2\sum\limits_{n=0}^\infty (-1)^n \frac{P_n(2x-1)}{2n+1}.
\end{align}
We consider the integral
\begin{align}\label{KEQ-1-Log-Sqrtx}
\int_0^1 K(1-x) \frac{\log^m(x)}{\sqrt{x}}dx&= \frac{\pi}{2} \sum_{n=0}^\infty \bigg[\frac1{4^n}\binn\bigg]^2 \int_0^1 (1-x)^{n} \frac{\log^m(x)}{\sqrt{x}}dx\nonumber\\
&=\frac{\pi}{2} \sum_{n=0}^\infty \bigg[\frac1{4^n}\binn\bigg]^2 \int_0^1 x^{n} \frac{\log^m(1-x)}{\sqrt{1-x}}dx\nonumber\\
&=(-1)^m m! 2^m \pi\sum_{n=0}^\infty \frac{1}{4^n}\binn\frac{t_{n+1}^\star(1_m)}{2n+1}\nonumber\\
&=2\sum_{n=0}^\infty \frac{(-1)^n}{2n+1}\int_0^1 P_n(2x-1)\frac{\log^m(x)}{\sqrt{x}}dx\nonumber\\
&=2^{m+2}m!(-1)^m\sum_{k=0}^m \sum_{n=0}^\infty \frac{t_n(1_k)t_{n+1}^\star(1_{m-k})}{(2n+1)^2},
\end{align}
where we have used formulas \eqref{Eq-FL-Finall} and \eqref{Eq-n-k-Log-Sqrtx}.
Finally, the equation \eqref{Eq-Apery-Mtshs-CB2} in the theorem follows from the Prop.\ \ref{pro-MtV}
by taking $l=2$. This completes the proof of the theorem.
\end{proof}

\begin{exa} Taking $m\le 3$, we can verify easily that
\begin{align*}
&\sum_{n=0}^\infty \frac{1}{4^n}\binn\frac{1}{2n+1}=\frac4{\pi}t(2)=\frac3{\pi}\ze(2)=\frac{\pi}{2},\\
&\sum_{n=0}^\infty \frac{1}{4^n}\binn\frac{t_{n+1}^\star(1)}{2n+1}=\frac4{\pi}\big(2t(2,1)+t(3)\big)=\pi \log 2,\\
&\sum_{n=0}^\infty \frac{1}{4^n}\binn\frac{t_{n+1}^\star(1,1)}{2n+1}=\frac4{\pi}\big(4t(2,1,1)+2t(3,1)+2t(2,2)+t(4)\big)=\frac{\pi^3}{24}+\pi \log^2(2),\\
&\sum_{n=0}^\infty \frac{1}{4^n}\binn\frac{t_{n+1}^\star(1,1,1)}{2n+1}
=\frac{\pi}{4}\ze(3)+\frac{2\pi}{3}\log^3(2)+\frac{\pi}{12}\log^2(2).
\end{align*}
\end{exa}

\section{General MSV-MtV product variant of Ap\'{e}ry-type series, I}\label{sec-SpecialMZSV-MtVproductI}
The theory of iterated integrals was developed first by K.T. Chen in the 1960's \cite{KTChen1971,KTChen1977}. It has played important roles in the study of algebraic topology and algebraic geometry in the past half century. For real values $a,b$ its simplest form is
$$\int_{a}^b f_p(t)dtf_{p-1}(t)dt\cdots f_1(t)dt:=\int\limits_{a<t_p<\cdots<t_1<b}f_p(t_p)f_{p-1}(t_{p-1})\cdots f_1(t_1)dt_1dt_2\cdots dt_p.$$
In this section, we use the iterated integrals to establish a recurrence relation of Ape\'ry series
and then derive the evaluations of some MZSV-MtV product variants of Ap\'{e}ry-type series.

Recall that the Hoffman dual of a composition $\bfk=(k_1,\ldots,k_r)$ is $\bfk^\vee=(k'_1,\ldots,k'_{r'})$ determined by $|\bfk|:=k_1+\cdots+k_r=k'_1+\cdots+k'_{r'}$ and
\begin{equation*}
\{1,2,\ldots,|\bfk|-1\}
=\Big\{ \sideset{}{_{i=1}^{j}}\sum k_i\Big\}_{j=1}^{r-1}
 \coprod \Big\{ \sideset{}{_{i=1}^{j}}\sum  k_i'\Big\}_{j=1}^{r'-1}.
\end{equation*}
Equivalently, $\bfk^\vee$ can be obtained from $\bfk$ by swapping the commas ``,'' and the plus signs ``+'' in the expression
\begin{equation}\label{eq:HdualDef}
 \bfk=(\underbrace{1+\cdots+1}_{\text{$k_1$ times}},\dotsc,\underbrace{1+\cdots+1}_{\text{$k_r$ times}}).
\end{equation}
For example, we have
$({1,1,2,1})^\vee=(3,2)\quad\text{and}\quad ({1,2,1,1})^\vee=(2,3).$ More generally, we have
\begin{align}\label{eq:HdualDef2}
{\bfk}^\vee=(\underbrace{1,\ldots,1}_{k_1}+\underbrace{1,\ldots,1}_{k_2}+1,\ldots,1+\underbrace{1,\ldots,1}_{k_r}).
\end{align}
To save space, for any $i,j\in\N$ we put
\begin{align*}
&{\ora\bfk}_{\hskip-1pt i,j}:=
\left\{
  \begin{array}{ll}
    (k_i,\ldots,k_{j}), \quad \ & \hbox{if $i\le j\le r$;} \\
    \emptyset, & \hbox{if $i>j$,}
  \end{array}
\right.
 \quad &\ola\bfk_{\hskip-1pt i,j}:=
\left\{
  \begin{array}{ll}
     (k_{j},\ldots,k_i), \quad\ & \hbox{if $i\le j\le r$;} \\
     \emptyset, & \hbox{if $i>j$.}
  \end{array}
\right.
\end{align*}
Set $\ora\bfk_{\hskip-1pt i}=\ora\bfk_{\hskip-1pt 1,i}=(k_1,\ldots,k_i)$ and $\ola\bfk_{\hskip-1pt i}=\ola\bfk_{\hskip-1pt i,r}=(k_r,\ldots,k_i)$ for all $1\le i\le p$.

\begin{lem}\label{lem-MPL-1/2-Dual} For any composition $\bfk=(k_1,\ldots,k_r)\in \N^r$, we have
\begin{align}\label{Eq-MPL-1/2}
\int_0^1 \frac{\Li_{\bfk}(t)}{t\sqrt{1-t}}dt=2^{|\bfk|+1}t((1,k_r,k_{r-1},\ldots,k_1)^\vee).
\end{align}
\end{lem}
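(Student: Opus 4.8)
The plan is to expand the multiple polylogarithm $\Li_{\bfk}(t)$ as a Chen iterated integral in the variable $t$ and then represent the whole integral $\int_0^1 \Li_{\bfk}(t)\,dt/(t\sqrt{1-t})$ as a single iterated integral on $[0,1]$. Recall that
\begin{equation*}
\Li_{\bfk}(t)=\int_0^t \left(\frac{dt}{t}\right)^{k_1-1}\frac{dt}{1-t}\cdots \left(\frac{dt}{t}\right)^{k_r-1}\frac{dt}{1-t},
\end{equation*}
written with the outermost integration variable closest to $t$. The extra factor $dt/(t\sqrt{1-t})$ prepends one more 1-form to this word. So the left-hand side becomes an iterated integral over $0<t_0<t_1<\cdots<t_{|\bfk|}<1$ whose integrand is a product of the forms $dt/t$, $dt/(1-t)$, together with exactly one outermost form $dt/(t\sqrt{1-t})$ coming from the kernel. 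The natural substitution is $t\mapsto (1-u)^2$ (equivalently $u=1-\sqrt t$), which is the standard device that turns the half-integer weight $1/\sqrt{1-t}$ (or $1/\sqrt{t}$ after reflection) into the level-$4$ odd denominators $2n-1$ that define multiple $t$-values.

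First I would reflect $t\mapsto 1-t$ to put the singular kernel $dt/(t\sqrt{1-t})$ into the form $dt/((1-t)\sqrt{t})$, so that the distinguished form becomes $dt/((1-t)\sqrt t)$ and the word coming from $\Li_{\bfk}$ is read in reverse order with $dt/t \leftrightarrow dt/(1-t)$ swapped (this reversal is exactly why the reversed composition $(k_r,\dots,k_1)$ and the Hoffman dual appear). Next I would apply $t=x^2$; then $dt/\sqrt t = 2\,dx$, $dt/(1-t)=2x\,dx/(1-x^2)$, and $dt/t = 2\,dx/x$, converting every 1-form into forms in $x$ with denominators drawn from $\{x,\,1-x,\,1+x\}$, i.e.\ level-$4$ forms, and producing the global power of $2$. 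The Hoffman-dual structure enters because swapping $dt/t$ and $dt/(1-t)$ on a word built from blocks $(dt/t)^{k_j-1}\,dt/(1-t)$ is precisely the ``swap commas and plus signs'' operation recorded in \eqref{eq:HdualDef} and \eqref{eq:HdualDef2}; the prepended kernel contributes the leading $1$ in $(1,k_r,\dots,k_1)$.

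The cleanest way to finish is to recognize that after $t=x^2$ the resulting iterated integral on $[0,1]$ in the forms $dx/x$, $2x\,dx/(1-x^2)=dx/(1-x)+dx/(1+x)$ is the integral representation of a multiple $t$-value: expanding $2x/(1-x^2)=\sum_{n\ge 1}2x^{2n-1}$ and integrating term by term produces the odd denominators $(2n_j-1)^{k_j}$ characteristic of $t(\cdots)$. Thus I would match the iterated-integral word, after the dual/reversal bookkeeping, against the iterated-integral representation of $t\bigl((1,k_r,\dots,k_1)^\vee\bigr)$, reading off the factor $2^{|\bfk|+1}$ from the $|\bfk|$ forms $2x\,dx/(1-x^2)$ together with the one factor of $2$ from $dt/\sqrt t=2\,dx$.

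The hard part will be the combinatorial bookkeeping of the iterated-integral word: tracking how reversal of the order and the $dt/t\leftrightarrow dt/(1-t)$ swap under $t\mapsto 1-t$ convert the composition $\bfk$ into its Hoffman dual with the prepended $1$, and verifying that the term-by-term expansion of the level-$4$ word yields exactly the summation constraints $n_1>\cdots$ with odd denominators defining the claimed multiple $t$-value. I would double check the edge cases (for instance when some $k_j=1$, so the block $(dt/t)^{k_j-1}$ is empty) to be sure the dual formula \eqref{eq:HdualDef2} is applied correctly, and confirm the power of $2$ by testing small depth-one cases such as $\bfk=(k)$ against the known evaluation of $\int_0^1 \Li_k(t)\,dt/(t\sqrt{1-t})$.
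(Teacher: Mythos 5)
Your proposal is correct and follows essentially the same route as the paper's own proof: the paper simply combines your two substitutions $t\mapsto 1-t$ and $t=x^2$ into the single change of variables $t\to 1-t^2$, arriving at the same iterated-integral word in the forms $dt/t$, $t\,dt/(1-t^2)$ and $dt/(1-t^2)$, which is identified with $2^{|\bfk|+1}\,t\bigl((1,k_r,\ldots,k_1)^\vee\bigr)$ by exactly the term-by-term series expansion and Hoffman-dual bookkeeping you describe. The only slip is cosmetic: the resulting forms have denominators among $x$, $1-x$, $1+x$ and are therefore level~$2$ (consistent with MtVs lying in $\CMZV^2$), not level~$4$.
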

\begin{proof}
According to the definition of classical multiple polylogarithm function, we have the iterated integral expression
\begin{align}\label{Eq-MPL-ItIn}
\Li_{k_1,\ldots,k_r}(x)=\int_0^x \left(\frac{dt}{1-t}\right)\left(\frac{dt}{t}\right)^{k_r-1}\cdots
\left(\frac{dt}{1-t}\right)\left(\frac{dt}{t}\right)^{k_1-1}.
\end{align}
Hence
\begin{align*}
\int_0^1 \frac{\Li_{\bfk}(t)}{t\sqrt{1-t}}dt&=\int_0^1 \left(\frac{dt}{1-t}\right)\left(\frac{dt}{t}\right)^{k_r-1}\cdots
\left(\frac{dt}{1-t}\right)\left(\frac{dt}{t}\right)^{k_1-1} \frac{dt}{t\sqrt{1-t}} \\
&\overset{t\to 1-t^2}{=}2^{|\bfk|+1} \int_0^ 1\frac{dt}{1-t^2} \left(\frac{t\, dt}{1-t^2} \right)^{k_1-1}\frac{dt}{t}\cdots \left(\frac{t\, dt}{1-t^2} \right)^{k_r-1}\frac{dt}{t}\\
&=2^{|\bfk|+1}t((1,k_r,k_{r-1},\ldots,k_1)^\vee).
\end{align*}
This completes the proof of the lemma.
\end{proof}

\begin{lem}\label{lem:xn-1Li} \emph{(\cite[Thm. 2.1]{XuZhao2020b})}
Let $r,n\in \N$, ${\bfk}:=(k_1,\dotsc,k_r)\in \N^r$. Then
\begin{align}\label{Eq-x-n-MPL-IN}
 \int\limits_0^1 {x^{n-1}{\rm Li}_{\bfk}(x)dx}
 &= \frac{(-1)^{|\bfk|-r}}{n^{k_1}} \ze_{n}^{\star}(\ora{\bfk}_{\hskip-2pt 2,r},1) +\sum\limits_{j=0}^{k_{1}-2}(-1)^{j}\frac{\ze(k_1-j,\ora{\bfk}_{\hskip-1pt 2,r})}{n^{j+1}}\nonumber\\
&
 +\sum\limits_{l=1}^{r-1}(-1)^{\ora{\mid{\bfk}_{l}\mid}-l}\sum\limits_{j=0}^{k_{l+1}-2}(-1)^{j}
\frac{\ze_{n}^{\star}(\ora{\bfk}_{\hskip-1pt 2,l},j+1)}{n^{k_{1}}}\ze(k_{l+1}-j,\ora{\bfk}_{\hskip-1pt l+2,r}) .
\end{align}
\end{lem}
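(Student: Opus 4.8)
The plan is to recast the integral as a single recursion in the depth $r$ and then induct. Set $I_n(\bfk):=\int_0^1 x^{n-1}\Li_\bfk(x)\,dx$ and abbreviate $\bfk'=(k_2,\dots,k_r)$. Expanding the single-variable polylogarithm as $\Li_\bfk(x)=\sum_{m\ge 1}\ze_{m-1}(\bfk')x^m/m^{k_1}$ and integrating term by term gives $I_n(\bfk)=\sum_{m\ge1}\ze_{m-1}(\bfk')/(m^{k_1}(m+n))$. First I would apply the partial-fraction identity
\[
\frac{1}{m^{k_1}(m+n)}=\frac{(-1)^{k_1}}{n^{k_1}(m+n)}+\sum_{j=1}^{k_1}\frac{(-1)^{k_1-j}}{n^{k_1-j+1}}\frac{1}{m^j}.
\]
The terms with $j\ge 2$ sum to the convergent series $\sum_{i=0}^{k_1-2}(-1)^i\ze(k_1-i,\bfk')/n^{i+1}$ (after reindexing $i=k_1-j$), which is exactly the middle group in the target formula. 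The $j=1$ term and the residue term are individually divergent but combine into the telescoping tail $\frac{(-1)^{k_1-1}}{n^{k_1}}\sum_{m\ge1}\ze_{m-1}(\bfk')(\frac1m-\frac1{m+n})$.

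Second, I would identify this tail with a depth-lowered integral. Since $\sum_{m\ge1}\ze_{m-1}(\bfk')(\frac1m-\frac1{m+n})=n\sum_{m\ge1}\ze_{m-1}(\bfk')/(m(m+n))=nI_n(1,\bfk')$, and using $\Li_{1,\bfk'}(x)=\int_0^x\frac{\Li_{\bfk'}(t)}{1-t}\,dt$ (from the iterated-integral representation) together with Fubini and $\frac{1-t^n}{1-t}=\sum_{p=0}^{n-1}t^p$, one gets $I_n(1,\bfk')=\frac1n\sum_{q=1}^n I_q(\bfk')$. Combining the two pieces yields the master recursion
\[
I_n(\bfk)=\sum_{i=0}^{k_1-2}\frac{(-1)^i}{n^{i+1}}\ze(k_1-i,\bfk')+\frac{(-1)^{k_1-1}}{n^{k_1}}\sum_{q=1}^n I_q(\bfk'),
\]
valid for all $k_1\ge1$ (the first sum being empty when $k_1=1$).

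Third, I would induct on the depth $r$. The base case $r=1$ is direct: $I_q(\emptyset)=1/q$, so $\sum_{q=1}^n I_q(\emptyset)=\ze^\star_n(1)$, and the recursion reproduces the claimed formula for $\Li_{k_1}$. For the inductive step I substitute the depth-$(r-1)$ formula for $I_q(\bfk')$ into $\sum_{q=1}^n I_q(\bfk')$ and invoke the star-sum collapse $\sum_{q=1}^n \ze^\star_q(\bfm)/q^a=\ze^\star_n(a,\bfm)$, which is immediate from the definition of $\ze^\star_n$ and converts partial sums over $q$ into one-longer star sums. The first term of the inductive hypothesis then produces the leading $\ze^\star_n(\ora{\bfk}_{2,r},1)$ term, the $\ze$-constant middle group contributes the $l=1$ piece of the $l$-sum, and the inductive hypothesis's own $l$-sum shifts upward to give the $l\ge 2$ pieces.

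The main obstacle I anticipate is purely the sign and index bookkeeping in this last reassembly: one must verify that the factor $(-1)^{k_1-1}$ multiplied by the inductive signs $(-1)^{|\bfk'|-(r-1)}$ and $(-1)^{(k_2+\cdots+k_l)-l}$ telescopes into the target sign $(-1)^{|\ora{\bfk}_l|-l}$, and that the index shift $l\mapsto l+1$ on the nested $l$-sum aligns the arguments $\ze^\star_n(\ora{\bfk}_{2,l},j+1)$ and $\ze(k_{l+1}-j,\ora{\bfk}_{l+2,r})$ correctly. A secondary point needing a brief justification is the legitimacy of the rearrangements in the $k_1=1$ case, where the Fubini swap and the telescoping are warranted by the monotone/absolute convergence of the underlying positive-term double series.
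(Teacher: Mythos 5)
Your proof is correct, but there is nothing in the paper to compare it against: the paper does not prove this lemma at all, it quotes it verbatim from \cite[Thm.~2.1]{XuZhao2020b}, so your argument is a genuinely independent, self-contained derivation. I checked its three pillars and the bookkeeping you flagged, and everything closes. The partial-fraction split of $1/(m^{k_1}(m+n))$ is the standard one, and recombining the individually divergent $j=1$ and residue pieces into the telescoping tail is legitimate because the identity holds termwise for each finite truncation $m\le M$ and every grouped series converges (all coefficients are nonnegative, so the initial termwise integration is also justified by monotone convergence). The depth-lowering identity $I_n(1,\bfk')=\frac1n\sum_{q=1}^n I_q(\bfk')$ follows from Fubini exactly as you describe, and the collapse $\sum_{q=1}^n \ze_q^\star(\bfm)/q^a=\ze_n^\star(a,\bfm)$ is immediate from the definition. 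For the reassembly: the leading term of the inductive hypothesis picks up $(-1)^{k_1-1}\cdot(-1)^{|\bfk|-k_1-(r-1)}=(-1)^{|\bfk|-r}$ and $\ze_n^\star(k_2,\ora{\bfk}_{3,r},1)=\ze_n^\star(\ora{\bfk}_{2,r},1)$, giving the target's first term; the inductive middle group becomes precisely the $l=1$ piece of the target's $l$-sum once you note $\ora{\bfk}_{2,1}=\emptyset$ and $|\ora{\bfk}_1|-1=k_1-1$; and the inductive $l$-sum shifts by $l\mapsto l+1$ with sign $(-1)^{k_1-1}(-1)^{(k_2+\cdots+k_{l+1})-l}=(-1)^{|\ora{\bfk}_{l+1}|-(l+1)}$, producing the $l\ge2$ pieces. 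The base case $r=1$ (with $\sum_{q=1}^n I_q(\emptyset)=\ze_n^\star(1)$) matches the formula, so the induction is complete. One can also confirm the master recursion on small cases, e.g.\ $I_1(2,1)=\ze(2,1)-1$, consistent with both your recursion and the stated formula. In short: a valid proof, by series expansion, partial fractions and induction on depth, of a result the paper only cites.
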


\begin{thm}\label{thm-mtss-mhs-cb}
Let $\bfk=(k_1,\ldots,k_r)\in \N^r$. If $p\in \N$ then we have
\begin{align}
&\frac{(-1)^p}{2^pp!}\int_0^1\frac{\Li_{\bfk}(x)\log^p(1-x)}{x\sqrt{1-x}}dx=\sum_{n=1}^\infty \frac{4^n}{\binn} \frac{t_n^\star(1_p)\ze_{n-1}(k_2,\ldots,k_r)}{n^{k_1+1} }\label{ChenKW}\\
&
\aligned
=&(-1)^{|\bfk|-r}\sum_{n=1}^\infty \frac1{4^n}\binn \frac{t_n(1_p) \ze_{n}^{\star}(\ora{\bfk}_{\hskip-2pt 2,r},1)}{n^{k_1}}
+\sum\limits_{j=0}^{k_{1}-2}(-1)^{j}\ze(k_1-j,\ora{\bfk}_{\hskip-2pt 2,r})\sum_{n=1}^\infty \frac1{4^n}\binn \frac{t_n(1_p)}{n^{j+1}}
\\
&
+\sum\limits_{l=1}^{r-1}(-1)^{\ora{\mid{\bfk}_{l}\mid}-l}\sum\limits_{j=0}^{k_{l+1}-2}(-1)^{j}
\ze(k_{l+1}-j,\ora{\bfk}_{l+2,r})\sum_{n=1}^\infty \frac1{4^n}\binn \frac{t_n(1_p)\ze^\star_n(\ora{\bfk}_{2,l},j+1)}{n^{k_1}} \in \MtV.
\endaligned
\label{Eq-mtss-mhs-cb}
\end{align}
If $p=0$, then we have
\begin{align}
&\int_0^1\frac{\Li_{\bfk}(x)}{x\sqrt{1-x}}dx=\sum_{n=1}^\infty \frac{4^n}{\binn} \frac{\ze_{n-1}(k_2,\ldots,k_r)}{n^{k_1+1}}
=2^{|\bfk|+1} t((1,k_r,k_{r-1},\ldots,k_1)^\vee)\nonumber\\
&\aligned
=& \sum\limits_{l=1}^{r-1}(-1)^{\ora{\mid{\bf k}_{l}\mid}-l}\sum\limits_{j=0}^{k_{l+1}-2}(-1)^{j}
\ze(k_{l+1}-j,\ora{\bfk}_{l+2,r})\sum_{n=1}^\infty \frac{1}{4^n}\binn \frac{\ze^\star_n(\ora{\bfk}_{2,l},j+1)}{n^{k_1}}\\
&
+\ze(k_1+1,\ora{\bfk}_{2,r})
+\sum\limits_{j=0}^{k_{1}-2}(-1)^{j}\ze(k_1-j,\ora{\bfk}_{2,r})\sum_{n=1}^\infty
\frac{1}{4^n}\binn \frac{1}{n^{j+1}} .
\endaligned
\label{Eq-mtss-mhs-cb-k-0}
\end{align}
\end{thm}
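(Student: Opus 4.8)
The proof rests on the elementary expansion $\Li_\bfk(x)=\sum_{n\ge1}\frac{x^n}{n^{k_1}}\ze_{n-1}(k_2,\dots,k_r)$, obtained by singling out the outermost index $n_1=n$ in the defining series \eqref{equ:classicalLi}, together with two complementary ways of integrating it against the weight $x^{-1}(1-x)^{-1/2}\log^p(1-x)$. To get the first equality \eqref{ChenKW}, the plan is to substitute this expansion, interchange summation and integration, and evaluate the resulting Beta-type integrals by \eqref{Eq-n-k-Log-Sqrtx}, which gives $\int_0^1 x^{n-1}\log^p(1-x)(1-x)^{-1/2}\,dx=(-1)^pp!\,2^p\frac{4^n}{n\binn}t_n^\star(1_p)$. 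Pulling out the constant $(-1)^pp!2^p$ and cancelling it against the prefactor $(-1)^p/(2^pp!)$ produces exactly $\sum_{n\ge1}\frac{4^n}{\binn}\frac{t_n^\star(1_p)\ze_{n-1}(k_2,\dots,k_r)}{n^{k_1+1}}$.

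For the chain of equalities in \eqref{Eq-mtss-mhs-cb} I would instead integrate the \emph{other} factor first. Rewriting the prefactor times $\log^p(1-x)(1-x)^{-1/2}$ through the generating identity \eqref{Eq-GF-Log-Sqrt1-x}, namely $\frac{(-1)^p}{2^pp!}\frac{\log^p(1-x)}{\sqrt{1-x}}=\sum_{n\ge1}\frac1{4^n}\binn t_n(1_p)x^n$, and interchanging sum and integral, the left side becomes $\sum_{n\ge1}\frac1{4^n}\binn t_n(1_p)\int_0^1 x^{n-1}\Li_\bfk(x)\,dx$. Now Lemma~\ref{lem:xn-1Li} evaluates each inner integral as a sum of three pieces; carrying the factor $\frac1{4^n}\binn t_n(1_p)$ through these and summing over $n$ reproduces, term by term, the three sums on the right of \eqref{Eq-mtss-mhs-cb}. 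The only bookkeeping to monitor is the faithful transcription of the signs $(-1)^{|\bfk|-r}$, the depth-dependent signs, and the index ranges, all inherited verbatim from Lemma~\ref{lem:xn-1Li}.

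The membership ``$\in\MtV$'' should \emph{not} be read off term by term, since the individual Ap\'ery sums on the right need not be multiple $t$-values individually; instead I would return to the integral on the left. Using $\log^p(1-x)=(-1)^pp!\,\Li_{1_p}(x)$ from \eqref{Eq-GF-Log-k}, the left side equals $\frac1{2^p}\int_0^1\frac{\Li_\bfk(x)\Li_{1_p}(x)}{x\sqrt{1-x}}\,dx$. Both polylogarithms are iterated integrals from $0$ to $x$ along the same path, so their product expands by the shuffle product of iterated integrals into a $\Q$-linear combination $\sum_\bfj c_\bfj\Li_\bfj(x)$ of single multiple polylogarithms. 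Applying Lemma~\ref{lem-MPL-1/2-Dual} to each $\int_0^1\frac{\Li_\bfj(x)}{x\sqrt{1-x}}\,dx$, which equals $2^{|\bfj|+1}$ times a (non-alternating) multiple $t$-value, then exhibits the whole expression as a $\Q$-linear combination of multiple $t$-values, yielding the claimed inclusion in $\MtV$.

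Finally, for $p=0$ I would treat the constant term of the generating function separately: by \eqref{Eq-GF-Log-Sqrt1-x0} one has $(1-x)^{-1/2}=1+\sum_{n\ge1}\frac1{4^n}\binn x^n$, so splitting off $n=0$ contributes the single term $\int_0^1\frac{\Li_\bfk(x)}{x}\,dx=\ze(k_1+1,\ora\bfk_{2,r})$, while the tail $n\ge1$ is handled through Lemma~\ref{lem:xn-1Li} exactly as above; the closed form $2^{|\bfk|+1}t((1,k_r,\dots,k_1)^\vee)$ is then immediate from Lemma~\ref{lem-MPL-1/2-Dual} applied directly to $\int_0^1\Li_\bfk(x)\,x^{-1}(1-x)^{-1/2}\,dx$. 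The main obstacle I anticipate is not a single hard estimate but the careful justification of the termwise integration (absolute convergence near $x=1$, where $(1-x)^{-1/2}\log^p(1-x)$ is integrable and $\Li_\bfk$ is at worst mildly singular) and the conceptual point that a single integral must be shown to admit two structurally different Ap\'ery-series evaluations, one of type $\frac{4^n}{\binn}$ in \eqref{ChenKW} and one of type $\frac1{4^n}\binn$ in \eqref{Eq-mtss-mhs-cb}, both necessarily collapsing to the multiple $t$-value delivered by the shuffle and Lemma~\ref{lem-MPL-1/2-Dual}.
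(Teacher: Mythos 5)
Your proposal is correct and follows essentially the same route as the paper's own proof: the first equality via the expansion of $\Li_{\bfk}(x)$ and the Beta-type integral \eqref{Eq-n-k-Log-Sqrtx}, the second chain via the generating function \eqref{Eq-GF-Log-Sqrt1-x} combined with Lemma \ref{lem:xn-1Li}, the $\MtV$ membership via the shuffle expansion of $\Li_{\bfk}(t)\Li_1(t)^p$ together with Lemma \ref{lem-MPL-1/2-Dual}, and the $p=0$ case via \eqref{Eq-GF-sqrt1-x}. Your explicit treatment of the $n=0$ term producing $\ze(k_1+1,\ora{\bfk}_{2,r})$ and your remark that the $\MtV$ inclusion must be read off the integral rather than termwise are both exactly the points the paper relies on, just stated more explicitly.
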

\begin{proof}
From \eqref{Eq-n-k-Log-Sqrtx}, we obtain
\begin{align}\label{Eq-iteratintegral-mtss-cb}
\frac{4^n}{n\binn}t_n^\star(1_p)=\frac{(-1)^p}{2^pp!}\int_0^1 x^{n-1}\frac{\log^p(1-x)}{\sqrt{1-x}}dx.
\end{align}
According to the definition of classical multiple polylogarithm function, we have
\begin{align}\label{MPL-II-2}
\Li_{k_1,\ldots,k_r}(x)=\sum_{n=1}^\infty \frac{\ze_{n-1}(k_2,\ldots,k_r)}{n^{k_1}}x^n.
\end{align}
Multiplying \eqref{Eq-iteratintegral-mtss-cb} by $\frac{\ze_{n-1}(k_2,\ldots,k_r)}{n^{k_1}}$, summing up, and applying \eqref{Eq-GF-Log-Sqrt1-x} and \eqref{Eq-x-n-MPL-IN} we get
\begin{align*}
\sum_{n=1}^\infty \frac{4^n}{n^{k_1+1}\binn}t_n^\star(1_p)\ze_{n-1}(k_2,\ldots,k_r)
&\,=\frac{(-1)^p}{2^pp!}\int_0^1 \frac{\Li_{k_1,\ldots,k_r}(t)\log^p(1-t)}{t\sqrt{1-t}}dt\nonumber\\
&\,=\sum_{n=1}^\infty \frac{1}{4^n}\binn t_n(1_p)\int_0^1 x^{n-1}\Li_{k_1,\ldots,k_r}(x)dx
\end{align*}
by \eqref{Eq-GF-Sums}. Hence \eqref{Eq-mtss-mhs-cb} follows immediately from Lemma \ref{lem:xn-1Li}.
Next, according to the iterated integral shuffle relation, we know that
$$\Li_{k_1,\ldots,k_r}(t)\log^p(1-t)=(-1)^p \Li_{k_1,\ldots,k_r}(t) \Li_1(t)^p
$$
can be expressed in terms of a rational linear combination of $\Li_\bfm(t)$ with
weight $|\bfm|=|\bfk|+p$ and depth $r+p$. For example, we have
\begin{equation*}
\Li_{2,2}(t)\log(1-t)=-2\Li_{2,2,1}(t)-2\Li_{2,1,2}(t)-\Li_{1,2,2}(t).
\end{equation*}
Thus we see that the left-hand side of \eqref{ChenKW} lies in $\MtV$ by Lemma \ref{lem-MPL-1/2-Dual}.

Similarly, applying \eqref{Eq-GF-sqrt1-x} and the fact
\begin{equation*}
\int_0^1 \frac{x^{n-1}}{\sqrt{1-x}} dx=B\Big(\frac12,n\Big)=\frac{4^n}{n\binn}  \qquad(\text{by }\eqref{BetaHalf})
\end{equation*}
we can also deduce  \eqref{Eq-mtss-mhs-cb-k-0} by an argument similar to that used in the proof of \eqref{Eq-mtss-mhs-cb}.
\end{proof}

\begin{re} It should be pointed out that Chen \cite[Eq. (5.2)]{ChenKW19} already obtained the explicit formula \eqref{ChenKW}.
\end{re}

\begin{cor}\label{COR-mhs-mths-cb-mtvs1}
Let $p\in\N_0$, $m\in \N$ and $\bfk\in \N^r$. Then we have
\begin{align*}
\sum_{n=1}^\infty \frac{4^n}{\binn} \frac{t^\star_n(1_{p})\ze_n(\bfk)}{n^{m+1}}\in \CMZV_{|\bfk|+m+p+1}^2.
\end{align*}
\end{cor}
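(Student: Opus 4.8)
The plan is to reduce the corollary to Theorem~\ref{thm-mtss-mhs-cb} by writing $\ze_n(\bfk)/n^m$ as a combination of single-variable multiple polylogarithms. The first step is to record the generating-function identity obtained by treating the summation variable $n$ as the \emph{top} index of a multiple harmonic sum. Setting $n_0=n$ and splitting the range $n_0\ge n_1$ into the two cases $n_0>n_1$ and $n_0=n_1$ gives, for any $m\in\N$,
\[\sum_{n=1}^\infty \frac{\ze_n(\bfk)}{n^m}x^n=\Li_{m,k_1,\ldots,k_r}(x)+\Li_{m+k_1,k_2,\ldots,k_r}(x),\]
which is just the stuffle decomposition of the series \eqref{MPL-II-2} after prepending the index $m$. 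Note that $m\ge 1$ is exactly what makes the first index of $\Li_{m,k_1,\ldots,k_r}$ a legitimate positive integer.

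Next I would factor $n^{-(m+1)}=n^{-1}\cdot n^{-m}$ and insert the integral representation \eqref{Eq-iteratintegral-mtss-cb},
\[\frac{4^n}{n\binn}t_n^\star(1_p)=\frac{(-1)^p}{2^pp!}\int_0^1 x^{n-1}\frac{\log^p(1-x)}{\sqrt{1-x}}\,dx.\]
Since every factor in the original series is nonnegative (observe that $(-1)^p\log^p(1-x)=(-\log(1-x))^p\ge 0$ on $(0,1)$, $\ze_n(\bfk),t_n^\star(1_p),4^n/\binn>0$, and the terms decay like $(\log n)^{O(1)}n^{-(m+1/2)}$ with $m\ge 1$), Tonelli's theorem justifies interchanging summation and integration. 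Combining this with the generating-function identity, the series in the corollary becomes
\[\frac{(-1)^p}{2^pp!}\int_0^1 \frac{\log^p(1-x)}{x\sqrt{1-x}}\Big[\Li_{m,k_1,\ldots,k_r}(x)+\Li_{m+k_1,k_2,\ldots,k_r}(x)\Big]dx.\]

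Each of these two integrals is precisely an instance of the left-hand side of \eqref{ChenKW}, with the composition $\bfk$ there replaced by $(m,k_1,\ldots,k_r)$ and by $(m+k_1,k_2,\ldots,k_r)$ respectively (the case $p=0$ being covered by \eqref{Eq-mtss-mhs-cb-k-0}). Hence by Theorem~\ref{thm-mtss-mhs-cb} each lies in $\MtV$; and since both compositions have weight $m+|\bfk|$, the shuffle-and-dualize mechanism behind Theorem~\ref{thm-mtss-mhs-cb} and Lemma~\ref{lem-MPL-1/2-Dual} produces multiple $t$-values of weight $m+|\bfk|+p+1=|\bfk|+m+p+1$. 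Finally, invoking $\MtV_w\subset\CMZV_w^2$ (as already used in Theorem~\ref{thm-Apery-Mtshs-CB}) yields the asserted membership. I expect the only points requiring genuine care to be the verification of the generating-function identity above and the justification of the sum--integral interchange; once these are in place, the conclusion is a direct appeal to Theorem~\ref{thm-mtss-mhs-cb}.
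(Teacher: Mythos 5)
Your proof is correct and takes essentially the same route as the paper: your identity $\sum_{n\ge1}\ze_n(\bfk)x^n/n^m=\Li_{m,k_1,\ldots,k_r}(x)+\Li_{m+k_1,k_2,\ldots,k_r}(x)$ is exactly the generating-function form of the paper's one-line decomposition $\ze_n(\bfk)=\ze_{n-1}(\bfk)+n^{-k_1}\ze_{n-1}(k_2,\ldots,k_r)$, after which both arguments reduce to the same two instances of Thm.~\ref{thm-mtss-mhs-cb} (for the compositions $(m,k_1,\ldots,k_r)$ and $(m+k_1,k_2,\ldots,k_r)$) together with the inclusion $\MtV\subset\CMZV^2$ and the same weight count. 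The only cosmetic difference is that you re-derive the series--integral equality already contained in \eqref{ChenKW} via the representation \eqref{Eq-iteratintegral-mtss-cb} and a Tonelli argument, rather than citing its series form directly.
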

\begin{proof} This follows from Thm.~\ref{thm-mtss-mhs-cb} since all MtVs lie in  $\CMZV^2$ and
\begin{equation*}
\ze_n(\bfk)=\frac1{n^{k_1}}\ze_{n-1}(k_2,\dots,k_r)+\ze_{n-1}(\bfk).
\end{equation*}
\end{proof}

\begin{cor}\label{COR-mhs-mths-cb-mtvs} For integers $m,r\in \N$ and $p\in \N_0$, we have
\begin{align}\label{Eq-mhs-mths-cb-mtvs}
\sum_{n=1}^\infty \frac1{4^n}\binn \frac{t_n(1_{p})\ze^\star_n(1_r)}{n^m}\in \CMZV_{p+r+m}^2.
\end{align}
\end{cor}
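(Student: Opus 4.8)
The plan is to obtain this as a single well-chosen specialization of Theorem~\ref{thm-mtss-mhs-cb}, arranged so that the target series is precisely the leading term of the expansion \eqref{Eq-mtss-mhs-cb}. Write $S=\sum_{n=1}^\infty \frac1{4^n}\binn \frac{t_n(1_p)\ze_n^\star(1_r)}{n^m}$ and take $\bfk=(m,1_{r-1})\in\N^r$, which is legitimate since $m,r\in\N$. For this composition $k_1=m$, $\ora{\bfk}_{2,r}=1_{r-1}$, and $(\ora{\bfk}_{2,r},1)=1_r$, so the first summand on the right of \eqref{Eq-mtss-mhs-cb} is exactly
\[
(-1)^{|\bfk|-r}\sum_{n=1}^\infty \frac1{4^n}\binn\frac{t_n(1_p)\ze_n^\star(1_r)}{n^{m}}=(-1)^{m-1}\,S .
\]

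The decisive feature of this choice is that every entry of $\bfk$ beyond the first equals $1$, whence $k_{l+1}=1$ for all $1\le l\le r-1$ and the inner sums $\sum_{j=0}^{k_{l+1}-2}$ in the double-sum term of \eqref{Eq-mtss-mhs-cb} are empty. That third term, which would otherwise contribute star sums $\ze_n^\star(\ora{\bfk}_{2,l},j+1)$ of non all-ones shape and force a more elaborate induction, therefore vanishes identically. What survives is
\[
I=(-1)^{m-1}S+\sum_{j=0}^{m-2}(-1)^{j}\,\ze(m-j,1_{r-1})\sum_{n=1}^\infty\frac1{4^n}\binn\frac{t_n(1_p)}{n^{j+1}},
\qquad I:=\frac{(-1)^p}{2^pp!}\int_0^1\frac{\Li_\bfk(x)\log^p(1-x)}{x\sqrt{1-x}}\,dx,
\]
and I would simply solve for $S$. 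Each remaining ingredient is then a level-$2$ colored MZV of weight $p+r+m$: expanding $\Li_\bfk(x)\log^p(1-x)$ by the shuffle product into multiple polylogarithms of weight $|\bfk|+p=m+r-1+p$ and applying Lemma~\ref{lem-MPL-1/2-Dual} termwise shows $I\in\MtV_{p+r+m}\subset\CMZV_{p+r+m}^2$; the coefficients $\ze(m-j,1_{r-1})$ are admissible MZVs (since $m-j\ge 2$), hence in $\MZV\subset\CMZV^2$; and each factor $\sum_n \frac1{4^n}\binn t_n(1_p)/n^{j+1}=2^{-p}R(p+1,1_j)$ lies in $\Q[\log2,\ze(2),\ze(3),\dots]\subset\CMZV^2$ by Theorem~\ref{thm-Apery-MRV}. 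A short weight count ($R(p+1,1_j)$ has weight $p+j+1$ and $\ze(m-j,1_{r-1})$ has weight $m-j+r-1$, summing to $p+r+m$) then places $S$ in $\CMZV_{p+r+m}^2$.

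The main obstacle I expect is the boundary case $p=0$. There $t_n(1_0)=1$ and Theorem~\ref{thm-mtss-mhs-cb} furnishes the variant identity \eqref{Eq-mtss-mhs-cb-k-0} in place of \eqref{Eq-mtss-mhs-cb}; the target $S=\sum_n \frac1{4^n}\binn\ze_n^\star(1_r)/n^{m}$ still reappears, this time as the leading term $(-1)^{|\bfk|-r}n^{-k_1}\ze_n^\star(\ora{\bfk}_{2,r},1)$ produced by Lemma~\ref{lem:xn-1Li}, so the same solve-for-$S$ step applies once that term is tracked, with the auxiliary factor $\sum_n \frac1{4^n}\binn/n^{j+1}$ handled by the $k=0$ extension of Theorem~\ref{thm-Apery-MRV} recorded after the example (via \cite{WX2021}). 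A secondary point that must be checked is weight-homogeneity: one has to confirm that the shuffle expansion of $\Li_\bfk(x)\log^p(1-x)$ and each use of Lemma~\ref{lem-MPL-1/2-Dual} preserve the total weight, so that the conclusion lands in the graded piece $\CMZV_{p+r+m}^2$ and not merely in $\CMZV^2$; this is routine but should be stated.
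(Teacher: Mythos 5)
Your proposal is correct and is essentially the paper's own proof: the paper likewise specializes Theorem~\ref{thm-mtss-mhs-cb} to $\bfk=(m,1_{r-1})$ (so that the double-sum term vanishes because every $k_{l+1}=1$), solves the resulting identity for the target series, and disposes of the auxiliary sums $\sum_{n\ge1} \frac1{4^n}\binn t_n(1_p)/n^{j+1}$ via \eqref{Eq-MtHS-CB}, citing \cite{WX2021} for its $p=0$ extension. Your boundary-case treatment of $p=0$ (tracking the leading term $(-1)^{|\bfk|-r}n^{-k_1}\ze_n^\star(\ora{\bfk}_{2,r},1)$) corresponds exactly to the paper's device of writing one unified identity with the extra term $\delta_{0,p}\,\ze(m+1,1_{r-1})$.
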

\begin{proof} Letting $\bfk=(m,1_{r-1})$ in Thm.~\ref{thm-mtss-mhs-cb} yields
\begin{align*}
\sum_{n=1}^\infty\frac{4^n}{\binn} \frac{\ze_{n-1}(1_{r-1})t_n^\star(1_p)}{n^{m+1}}&=\sum_{j=1}^{m-1} (-1)^{j-1} \ze(m+1-j,1_{r-1})\sum_{n=1}^\infty \frac1{4^n}{\binn} \frac{t_n(1_p)}{n^j}\\
&\quad+(-1)^{m-1}\sum_{n=1}^\infty \frac1{4^n}{\binn} \frac{t_n(1_{p})\ze^\star_n(1_r)}{n^m}+\delta_{0,p}\ze(m+1,1_{r-1}),
\end{align*}
where $\delta_{0,0}=1$ and $\delta_{0,p}=0$ for $p>0$. In particular, if letting $m=1$ and $p\in \N$ in the above formula, we obtain Thm.~\ref{thm-Apery-MtV-Mhs-CB} by replacing $r$ by $m$.

Applying \eqref{Eq-MtHS-CB} and noting that the \eqref{Eq-MtHS-CB} also holds for $p=0$ (see \cite[Thm. 3.4]{WX2021}), we obtain the desired description.
\end{proof}

\begin{exa} Setting $m=1,p=0$ yields
\[\sum_{n=1}^\infty  \frac1{4^n}\binn \frac{\ze^\star_n(1_r)}{n^2}=\ze(r+1,1)-2^{r+2}t(r+1,1)+2\ze(r+1)\log 2.\]
Setting $r=m=2,p=1$ we get
\begin{align*}
&\sum_{n=1}^\infty \frac{4^n}{\binn} \frac{\ze_{n-1}(1)t_n^\star(1)}{n^{3}}=16(3t(4,1)+t(3,2))=45\ze(4)\log 2-31\ze(5),\\
&\sum_{n=1}^\infty \frac1{4^n}\binn \frac{t_n(1)\ze^\star_n(1,1)}{n^2}=\frac3{2}\ze(2)\ze(3)+31\ze(5)-45\ze(4)\log 2.
\end{align*}
\end{exa}

\begin{cor} For integers $m,r\in \N$ and $p\in \N_0$, we have
\begin{align}\label{Eq-mhs-mths-cb-mtvs-2-2}
\sum_{n=1}^\infty \frac1{4^n}\binn \frac{t_n(1_{p})\ze^\star_n(2_{r-1},1)}{n^m}\in \CMZV_{p+m+2r-1}^2,
\end{align}
where $2_p$ means the string of $p$ repetitions of $2$'s.
\end{cor}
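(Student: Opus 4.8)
The plan is to argue by induction on $r$, feeding the composition $\bfk=(m,2_{r-1})\in\N^r$ into Thm~\ref{thm-mtss-mhs-cb}. With this choice one has $k_1=m$ and $\ora{\bfk}_{2,r}=2_{r-1}$, so that $\ze^\star_n(\ora{\bfk}_{2,r},1)=\ze^\star_n(2_{r-1},1)$ and $|\bfk|=m+2(r-1)$. Hence the leading term on the right of \eqref{Eq-mtss-mhs-cb} (resp. of the $p=0$ formula \eqref{Eq-mtss-mhs-cb-k-0}) is exactly $(-1)^{m+r}$ times the sum we want to control. For the base case $r=1$ we have $\ze^\star_n(2_0,1)=\ze^\star_n(1)$, and the assertion is precisely the $r=1$ instance of Cor.~\ref{COR-mhs-mths-cb-mtvs}, giving membership in $\CMZV^2_{p+m+1}=\CMZV^2_{p+m+2\cdot 1-1}$.

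For the inductive step I would first record that the whole left-hand side $\sum_{n\ge 1}\frac{4^n}{\binn}\frac{t_n^\star(1_p)\ze_{n-1}(2_{r-1})}{n^{m+1}}$ lies in $\MtV$: by \eqref{ChenKW} it equals $\frac{(-1)^p}{2^pp!}\int_0^1\Li_{\bfk}(x)\log^p(1-x)/(x\sqrt{1-x})\,dx$, whose integrand is a $\Q$-combination of terms $\Li_{\bfm}(x)/(x\sqrt{1-x})$ with $|\bfm|=m+2r-2+p$ by the shuffle relation, and each such integral lies in $\MtV$ of weight $|\bfm|+1=p+m+2r-1$ by Lemma~\ref{lem-MPL-1/2-Dual}; thus the left side sits in $\MtV_{p+m+2r-1}\subset\CMZV^2_{p+m+2r-1}$. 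Moving everything except the leading term to the left, it then suffices to show that the two remaining families of terms produced by \eqref{Eq-mtss-mhs-cb} both lie in $\CMZV^2_{p+m+2r-1}$. The first family is $\sum_{j=0}^{m-2}(-1)^j\ze(m-j,2_{r-1})\sum_{n\ge1}\frac1{4^n}\binn\frac{t_n(1_p)}{n^{j+1}}$: each inner Apéry sum equals $\frac1{2^p}R(p+1,1_j)\in\Q[\log2,\ze(2),\ze(3),\dots]\subset\CMZV^2$, homogeneous of weight $p+j+1$ by Thm~\ref{thm-Apery-MRV} (and by \cite{WX2021} when $p=0$), while the admissible MZV $\ze(m-j,2_{r-1})$ has weight $m-j+2r-2$, so the product has weight $p+m+2r-1$. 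The second family, coming from the $l$-sum in which $k_{l+1}=2$ forces $j=0$, is $\sum_{l=1}^{r-1}(-1)^{m+l}\ze(2_{r-l})\sum_{n\ge1}\frac1{4^n}\binn\frac{t_n(1_p)\ze^\star_n(2_{l-1},1)}{n^m}$, where each inner sum is precisely the statement being proved with $r$ replaced by $l<r$; by the induction hypothesis it lies in $\CMZV^2_{p+m+2l-1}$, and multiplying by the weight-$2(r-l)$ MZV $\ze(2_{r-l})$ again lands in weight $p+m+2r-1$. Since $\CMZV^2_{p+m+2r-1}$ is a $\Q$-vector space and the leading coefficient $(-1)^{m+r}$ is a unit, the target lies in $\CMZV^2_{p+m+2r-1}$, completing the induction.

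The routine parts are the sign/index bookkeeping in specializing Thm~\ref{thm-mtss-mhs-cb} to $\bfk=(m,2_{r-1})$, together with the verification that every product is weight-homogeneous of the single weight $p+m+2r-1$, so that one genuinely lands in the graded piece rather than a mixed-weight span. The one point needing care is $p=0$: then \eqref{Eq-mtss-mhs-cb-k-0} carries the extra admissible term $\ze(m+1,2_{r-1})$, which has weight $m+2r-1$ and is harmless, and presents the leading term $(-1)^{m+r}\sum_n\frac1{4^n}\binn\ze^\star_n(2_{r-1},1)/n^m$ in a slightly different shape. I would therefore derive the recursion uniformly from Lemma~\ref{lem:xn-1Li} together with the expansion $1/\sqrt{1-x}-1=\sum_{n\ge1}\frac1{4^n}\binn x^n$, rather than quoting the displayed \eqref{Eq-mtss-mhs-cb-k-0} verbatim, and note that the inner sums $\sum_n\frac1{4^n}\binn/n^{j+1}$ are the $p=0$ Apéry constants already known to lie in $\CMZV^2$. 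No genuine obstacle remains beyond this bookkeeping: the structural mechanism is simply that the second term family strictly lowers $r$, which is exactly what drives the induction.
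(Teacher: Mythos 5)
Your proposal is correct and is essentially the paper's own argument: the paper's entire proof consists of the remark that setting $\bfk=(m,2_{r-1})$ in Thm.~\ref{thm-mtss-mhs-cb} and proceeding as in Cor.~\ref{COR-mhs-mths-cb-mtvs} gives the claim, which is exactly your specialization. Your write-up merely makes explicit what that one-liner suppresses — the induction on $r$ (genuinely needed here, since with $k_{l+1}=2$ the $l$-sum in \eqref{Eq-mtss-mhs-cb} no longer vanishes as it does for $\bfk=(m,1_{r-1})$), the weight bookkeeping, and the $p=0$ adjustment via Lemma~\ref{lem:xn-1Li} — all of which is sound.
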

\begin{proof}
Similarly to the proof of Cor.~\ref{COR-mhs-mths-cb-mtvs}, setting $\bfk=(m,2_{r-1})$ in Thm.~\ref{thm-mtss-mhs-cb}, we can quickly arrive at \eqref{Eq-mhs-mths-cb-mtvs-2-2}.
\end{proof}

\begin{exa} Taking $p\le 1$ and $r=2$, we have
\begin{align*}
&\sum_{n=1}^\infty \frac1{4^n}\binn \frac{\ze^\star_n(2,1)}{n^2} =\frac{75}{8}\ze(5)-4\ze(4)\log 2-3\ze(2)\ze(3),\\
&\sum_{n=1}^\infty \frac1{4^n}\binn \frac{t_n(1)\ze^\star_n(2,1)}{n^2} =\frac{1055}{32}\ze(6)-\frac{69}{4}\ze^2(3)+32\ze(\bar5,1)-62\ze(5)\log 2\\
&\quad\quad\quad\quad\quad\quad\quad\quad\quad\quad\quad+28\ze(2)\ze(3)\log 2-16\ze(2)\ze(\bar3,1).
\end{align*}
\end{exa}

\section{General MSV-MtV product variant of Ap\'{e}ry-type series, II}\label{sec-SpecialMZSV-MtVproductII}
In this section, we will use a different approach to derive more general formulas of
some MZSV-MtV product variant of Ap\'{e}ry-type series treated in the last section. The key idea is
to use the shuffle regularization of CMZVs developed in Racinet's thesis (see \cite{Racinet2002}).
For an exposition in English, the reader can consult the book by the second author \cite{Zhao2016}.

\begin{lem}\label{thm-ItI1} \emph{(cf.\ \cite[Thm. 2.1]{XuZhao2021a})}
For any composition $\bfk=(k_1,\ldots,k_r)\in\N^r$, $n\in\N$, and $|x|\le 1$, we have
\begin{align}\label{FII1}
\int_0^x  \frac{t^n\, dt}{1-t} \left(\frac{dt}{t}\right)^{k_1-1}\cdots \frac{dt}{1-t}\left(\frac{dt}{t}\right)^{k_r-1}
=(-1)^r\ze^\star_n(\bfk;x)-\sum\limits_{j=1}^{r}(-1)^{j} \ze^\star_n(\ora\bfk_{\hskip-2pt j-1})\Li_{\ola\bfk_{\hskip-1pt j}}(x),
\end{align}
where
\begin{equation*}
\ze^\star_n(\bfk;x):=\sum\limits_{n\geq n_1\geq\dotsm \geq n_r\geq 1}\frac{x^{n_r}}{n_1^{k_1}\cdots n_r^{k_r}}.
\end{equation*}
\end{lem}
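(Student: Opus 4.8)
The plan is to first convert the iterated integral on the left of \eqref{FII1} into an explicit nested series, and then to argue by induction on the depth $r$. Expanding each $1$-form as a geometric series, the innermost form $\frac{t^n\,dt}{1-t}=\sum_{m\ge n}t^m\,dt$ integrates to $\sum_{m>n}t^m/m$, so it contributes a smallest summation index strictly larger than $n$; the subsequent blocks $\frac{dt}{1-t}\left(\frac{dt}{t}\right)^{k_j-1}$ then reproduce, exactly as in the standard expansion \eqref{Eq-MPL-ItIn}, the remaining indices of a multiple polylogarithm. Carrying this out shows that the left-hand side equals the ``tail'' of $\Li_{\ola\bfk_1}(x)=\Li_{k_r,\dots,k_1}(x)$ in which every summation index exceeds $n$, namely $\sum_{p_r>\cdots>p_1>n} x^{p_r}/(p_1^{k_1}\cdots p_r^{k_r})$. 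First I would record the base case $r=1$: here the tail is $\sum_{m>n}x^m/m^{k_1}=\Li_{k_1}(x)-\ze^\star_n(k_1;x)$, which is precisely the right-hand side of \eqref{FII1}, since $\ora\bfk_0=\emptyset$, $\ola\bfk_1=(k_1)$, and $\ze^\star_n(k_1)=\ze_n(k_1)$ in depth one.

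For the inductive step I would peel off the outermost $1$-form and use the last-form rule $\frac{d}{dx}\int_0^x\om_1\cdots\om_N=\om_N(x)\int_0^x\om_1\cdots\om_{N-1}$. Both sides of \eqref{FII1} vanish at $x=0$ (each summand on the right carries a factor $\ze^\star_n(\cdots;x)$ or $\Li_{\ola\bfk_j}(x)$ that vanishes there), so it suffices to match derivatives, and I would induct on the weight $|\bfk|$. If $k_r\ge 2$ the outermost form is $dt/t$ and differentiation lowers $k_r$ by one: the left side becomes $\frac1x$ times the integral for $(k_1,\dots,k_r-1)$. Differentiating the right side term by term gives $\frac{d}{dx}\ze^\star_n(\bfk;x)=\frac1x\ze^\star_n(k_1,\dots,k_r-1;x)$ and $\frac{d}{dx}\Li_{\ola\bfk_j}(x)=\frac1x\Li_{(k_r-1,k_{r-1},\dots,k_j)}(x)$, while the $x$-independent factors $\ze^\star_n(\ora\bfk_{j-1})$ are untouched; this reproduces exactly $\frac1x$ times the claimed formula for $(k_1,\dots,k_r-1)$, which holds by induction. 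Hence the $k_r\ge2$ case closes cleanly.

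The hard part is the case $k_r=1$. Now the outermost form is $dt/(1-t)$, so the left side equals $\int_0^x \Phi(t)\,dt/(1-t)$, where $\Phi$ is the depth-$(r-1)$ integral for $\bfk'=(k_1,\dots,k_{r-1})$, already known by induction. The obstruction is that $\frac{d}{dx}\ze^\star_n(\bfk;x)$ no longer reduces nicely, because the new innermost exponent equals $1$ and an undesirable partial geometric sum $\sum_{m_r\le m_{r-1}}x^{m_r}$ appears. I would therefore not differentiate the right side, but instead verify the integral recursion $\Psi(\bfk;x)=\int_0^x \Psi(\bfk';t)\,dt/(1-t)$ directly at the level of power series, where $\Psi$ denotes the right-hand side of \eqref{FII1}. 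Integrating $\Li_{\ola{\bfk'}_j}(t)/(1-t)$ prepends an index and yields the polylogarithm terms $\Li_{\ola\bfk_j}(x)$ with $k_r=1$; integrating $\ze^\star_n(\bfk';t)/(1-t)$ is the crucial computation, since $1/(1-t)=\sum_{l\ge0}t^l$ extends the smallest index beyond $n$, and splitting the resulting range at the threshold $n$ (an inclusion--exclusion between indices $\le n$ and $>n$) produces the new truncated star sum $\ze^\star_n(\bfk;x)$ together with a correction that merges into the polylogarithm contribution. Reconciling the \emph{strict} truncated sums that arise naturally from this splitting with the \emph{star} truncated sums appearing in \eqref{FII1} is the delicate stuffle/partial-fraction bookkeeping, and it is the main obstacle; it is exactly the arbitrary-depth generalization of the elementary identity $\sum_{m>n}x^m/m^k=\Li_k(x)-\ze^\star_n(k;x)$ used in the base case. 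Finally, convergence for $|x|=1$ follows from Abel's theorem, and the overall argument parallels \cite[Thm.~2.1]{XuZhao2021a}.
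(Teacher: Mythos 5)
First, a point of comparison: the paper does not actually prove this lemma --- it is imported with a ``cf.''\ citation to \cite[Thm.~2.1]{XuZhao2021a} --- so your argument has to stand on its own, and it does. Your reading of the integration convention (leftmost form innermost), the tail-series identification of the left-hand side, the base case $r=1$, and the weight-induction step for $k_r\ge 2$ (differentiate away the outermost form $dt/t$; both sides vanish at $x=0$; the $x$-independent coefficients $\ze^\star_n(\ora\bfk_{\hskip-2pt j-1})$ are inert) are all correct. The only place you sell your plan short is the $k_r=1$ step: the ``delicate stuffle/partial-fraction bookkeeping'' you brace for as the main obstacle disappears if, instead of splitting the new innermost index at the threshold $n$, you split it by complementation against the \emph{previous} smallest index $n_{r-1}$. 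Concretely, writing $\Psi$ for the right-hand side of \eqref{FII1} and $\bfk'=(k_1,\dots,k_{r-1})$, one has $\int_0^x t^{m}\,\frac{dt}{1-t}=\sum_{p>m}\frac{x^p}{p}=\Li_1(x)-\sum_{p=1}^{m}\frac{x^p}{p}$, so summing this against the terms of $\ze^\star_n(\bfk';t)$ gives
\begin{equation*}
\int_0^x \ze^\star_n(\bfk';t)\,\frac{dt}{1-t}
=\ze^\star_n(\ora\bfk_{\hskip-2pt r-1})\,\Li_1(x)-\ze^\star_n(\bfk;x),
\end{equation*}
with no strict truncated sums ever appearing: the subtracted piece is already the star sum $\ze^\star_n(\bfk;x)$ (because $k_r=1$ makes the appended exponent equal to $1$), and the term $\ze^\star_n(\ora\bfk_{\hskip-2pt r-1})\Li_1(x)$ is precisely the missing $j=r$ summand of \eqref{FII1}, since $\ola\bfk_{\hskip-1pt r}=(1)$. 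Feeding this, together with $\int_0^x\Li_{k_{r-1},\dots,k_j}(t)\,\frac{dt}{1-t}=\Li_{1,k_{r-1},\dots,k_j}(x)=\Li_{\ola\bfk_{\hskip-1pt j}}(x)$, into your recursion $\Psi(\bfk;x)=\int_0^x\Psi(\bfk';t)\,\frac{dt}{1-t}$ and tracking the prefactor $(-1)^{r-1}$ closes the induction in three lines. So: the proof is correct, and the step you flagged as hardest is in fact exactly the depth-$r$ version of your base-case identity, as you suspected; your appeal to Abel's theorem then handles the convergent boundary points $|x|=1$ (when $k_r=1$ and $x=1$ both sides diverge, a degeneracy already present in the lemma as stated).
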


\begin{lem}\label{lem-mzv-mzsv-cb-mmvs}
Let $p,m\in\N_0$, $r\in \N$ and $\bfk=(k_1,\ldots,k_r)\in \N^r$. If $|x|<1$ then
\begin{align}\label{eq-mzv-mzsv-cb-mmvs}
& (-1)^r \sum_{n=1}^\infty \frac{1}{4^n}\binn\frac{\ze^\star_n(\bfk;x)t_n(1_p)}{n^{m+1}}
-\sum_{j=1}^r (-1)^j \Li_{\ola{\bfk}_{\hskip-2pt j}}(x)\sum_{n=1}^\infty \frac{1}{4^n}\binn
\frac{\ze^\star_n(\ora{\bfk}_{\hskip-2pt j-1})t_n(1_p)}{n^{m+1}}\nonumber\\
&= 2^r\int_{\sqrt{1-x}}^1 \left( \frac{2tdt}{1-t^2}\right)^{k_r-1}\frac{dt}{t}\cdots\left( \frac{2tdt}{1-t^2}\right)^{k_1-1}\frac{dt}{t} \left( \frac{2tdt}{1-t^2}\right)^{m}\chi_p\left(\frac{dt}{t}\right)^p,
\end{align}
where $\chi_0=\frac{2dt}{1+t}$ and $\chi_p=\frac{2dt}{1-t^2}$ for $p\ge 1$.
\end{lem}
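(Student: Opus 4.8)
The plan is to reduce the identity to the single iterated-integral formula of Lemma~\ref{thm-ItI1} and then transport the integral from the interval $(0,x)$ to $(\sqrt{1-x},1)$ by the same substitution $t\to 1-t^2$ used in Lemma~\ref{lem-MPL-1/2-Dual}. First I would factor $\frac1{4^n}\binn\frac{t_n(1_p)}{n^{m+1}}$ out of both sums on the left, so that the left-hand side becomes $\sum_{n\ge1}\frac1{4^n}\binn\frac{t_n(1_p)}{n^{m+1}}$ multiplied by the bracket
\[
(-1)^r\ze^\star_n(\bfk;x)-\sum_{j=1}^r(-1)^j\ze^\star_n(\ora\bfk_{\hskip-2pt j-1})\Li_{\ola\bfk_{\hskip-1pt j}}(x).
\]
By Lemma~\ref{thm-ItI1} this bracket equals $\int_0^x\frac{t^n\,dt}{1-t}(\frac{dt}{t})^{k_1-1}\cdots\frac{dt}{1-t}(\frac{dt}{t})^{k_r-1}$. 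Since $|x|<1$ the whole series converges absolutely, so I may interchange summation and integration; as the only $n$-dependence inside the integral is the factor $t^n$ of the innermost form $\frac{t^n\,dt}{1-t}$, the summation collapses that innermost form into $\frac{G(t)\,dt}{1-t}$, where $G(t):=\sum_{n\ge1}\frac1{4^n}\binn\frac{t_n(1_p)}{n^{m+1}}t^n$.

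Next I would realise $G$ itself as an iterated integral. Writing $H(t):=\sum_{n\ge1}\frac1{4^n}\binn t_n(1_p)t^n$, formula \eqref{Eq-GF-Log-Sqrt1-x} gives $H(t)=\frac1{2^p\sqrt{1-t}}\int_0^t(\frac{ds}{1-s})^p$ for $p\ge1$, and $G$ is recovered from $H$ by applying the operator $(Jf)(t):=\int_0^t f(s)\frac{ds}{s}$ exactly $m+1$ times, which supplies the factor $n^{-(m+1)}$. Splicing these representations into $\int_0^x\frac{G(t)}{1-t}\,dt\,(\cdots)$ expresses the entire left-hand side as a single iterated integral from $0$ to $x$, with scalar $2^{-p}$, whose $1$-forms read, from innermost to outermost,
\[
\Big(\frac{dt}{1-t}\Big)^{p},\ \frac{dt}{t\sqrt{1-t}},\ \Big(\frac{dt}{t}\Big)^{m},\ \frac{dt}{1-t},\Big(\frac{dt}{t}\Big)^{k_1-1},\dots,\frac{dt}{1-t},\Big(\frac{dt}{t}\Big)^{k_r-1}.
\]

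I would then apply $t\to 1-t^2$, under which $\frac{dt}{1-t}\mapsto-\frac{2\,dt}{t}$, $\frac{dt}{t}\mapsto-\frac{2t\,dt}{1-t^2}$ and $\frac{dt}{t\sqrt{1-t}}\mapsto-\frac{2\,dt}{1-t^2}$, while the path $0\to x$ is carried to $1\to\sqrt{1-x}$. Reversing the path turns this into an integral over $(\sqrt{1-x},1)$, reverses the word, and contributes a sign $(-1)^N$ with $N=|\bfk|+m+p+1$ the total number of forms; one checks that the reversed, pulled-back word is exactly the sequence $(\frac{2tdt}{1-t^2})^{k_r-1}\frac{dt}{t}\cdots(\frac{2tdt}{1-t^2})^{k_1-1}\frac{dt}{t}(\frac{2tdt}{1-t^2})^{m}\chi_p(\frac{dt}{t})^p$ appearing on the right. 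It then remains to collect constants: the $r+p$ forms of type $\frac{dt}{1-t}$ each contribute $-2$, while the $|\bfk|-r+m$ forms of type $\frac{dt}{t}$ and the single form $\frac{dt}{t\sqrt{1-t}}$ each contribute $-1$; together with $(-1)^N$ and the prefactor $2^{-p}$ all signs cancel (the total sign exponent being $2N$) and the powers of two collapse to exactly $2^r$, which is the claimed constant.

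The step I expect to be the main obstacle is the case $p=0$, where the argument must be adjusted. Here $H(t)=\frac1{\sqrt{1-t}}-1$ by \eqref{Eq-GF-Log-Sqrt1-x0}, so the innermost form is not a product of standard forms but the combined $\frac{1-\sqrt{1-t}}{t\sqrt{1-t}}\,dt$; a direct computation shows that under $t\to 1-t^2$ this pulls back to $-\frac{2\,dt}{1+t}=-\chi_0$, which is precisely what produces $\chi_0=\frac{2dt}{1+t}$ in place of $\chi_p=\frac{2dt}{1-t^2}$ in the $p=0$ branch. With this single substitution the same sign-and-power bookkeeping (now with $N=|\bfk|+m+1$ and no trailing $\frac{dt}{t}$ forms) goes through verbatim and again yields the factor $2^r$, completing the proof.
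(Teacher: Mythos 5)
Your proposal is correct and follows essentially the same route as the paper's own proof: both apply Lemma~\ref{thm-ItI1}, interchange the sum with the integral so the generating function $\sum_{n\ge1}\frac1{4^n}\binn\frac{t_n(1_p)}{n^{m+1}}t^n$ becomes the innermost factor, realize it as the iterated integral $2^{-p}\int_0^t(\frac{ds}{1-s})^p\frac{ds}{s\sqrt{1-s}}(\frac{ds}{s})^m$ via \eqref{Eq-GF-Log-Sqrt1-x} (resp.\ \eqref{Eq-GF-Log-Sqrt1-x0} when $p=0$, producing $\chi_0$), and then perform the substitution $t\to1-t^2$ with path reversal. Your sign-and-power bookkeeping (total sign exponent $2N$, net factor $2^r$) and your treatment of the $p=0$ branch agree exactly with the paper's computation.
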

\begin{proof}
If $p>0$ then from \eqref{Eq-GF-Log-Sqrt1-x} we have
\begin{align*}
2^p\sum_{n=1}^\infty \frac{1}{4^n}\binn \frac{t_n(1_p)}{n^{m+1}}x^n=\int_0^x \left(\frac{dt}{1-t}\right)^p\frac{dt}{t\sqrt{1-t}}\left(\frac{dt}{t}\right)^{m}.
\end{align*}
Multiplying \eqref{FII1} by $\frac{2^p}{4^n}\binn\frac{t_n(1_p)}{n^{m+1}}$  we see that
\begin{align*}
&2^p(-1)^r \sum_{n=1}^\infty  \frac{1}{4^n}\binn \frac{\ze^\star_n(\bfm;x)t_n(1_p)}{n^{m+1}}
-2^p \sum_{j=1}^r (-1)^j \Li_{\ola\bfk_{\hskip-1pt j}}(x)\sum_{n=1}^\infty  \frac{1}{4^n}\binn
\frac{\ze^\star_n(\ora\bfk_{\hskip-2pt j-1}))t_n(1_p)}{n^{m+1}} \\
&=\int_0^x \left(\frac{dt}{1-t}\right)^p\frac{dt}{t\sqrt{1-t}}\left(\frac{dt}{t}\right)^{m}\frac{dt}{1-t}\left(\frac{dt}{t}\right)^{k_1-1}\cdots \frac{dt}{1-t}\left(\frac{dt}{t}\right)^{k_r-1}  \\
&\overset{t\to 1-t^2}{=} 2^{r+p}\int_{\sqrt{1-x}}^1 \left( \frac{2tdt}{1-t^2}\right)^{k_r-1}\frac{dt}{t}\cdots\left( \frac{2tdt}{1-t^2}\right)^{k_1-1}\frac{dt}{t} \left( \frac{2tdt}{1-t^2}\right)^{m}\frac{2dt}{1-t^2}\left(\frac{dt}{t}\right)^p.
\end{align*}
If $p=0$ then we need to use \eqref{Eq-GF-Log-Sqrt1-x0} and see that the
\begin{align*}
&2^p(-1)^r \sum_{n=1}^\infty  \frac{1}{4^n}\binn \frac{\ze^\star_n(\bfm;x) }{n^{m+1}}
-2^p \sum_{j=1}^r (-1)^j \Li_{\ola\bfk_{\hskip-1pt j}}(x)\sum_{n=1}^\infty  \frac{1}{4^n}\binn
\frac{\ze^\star_n(\ora\bfk_{\hskip-2pt j-1})) }{n^{m+1}} \\
&=\int_0^x \left(\frac1{\sqrt{1-t}}-1\right)\frac{dt}{t}\left(\frac{dt}{t}\right)^{m}\frac{dt}{1-t}\left(\frac{dt}{t}\right)^{k_1-1}\cdots \frac{dt}{1-t}\left(\frac{dt}{t}\right)^{k_r-1}  \\
&\overset{t\to 1-t^2}{=} 2^{r+p}\int_{\sqrt{1-x}}^1 \left( \frac{2tdt}{1-t^2}\right)^{k_r-1}\frac{dt}{t}\cdots\left( \frac{2tdt}{1-t^2}\right)^{k_1-1}\frac{dt}{t} \left( \frac{2tdt}{1-t^2}\right)^{m}\frac{2dt}{1+t} .
\end{align*}
This completes the proof of the lemma.
\end{proof}

Put $\ta=\frac{dt}{t}$ and for $N\in\N$ denote by $\Gamma_N$ the group of $N$th roots of unity.
Set $\tx_\xi=\frac{dt}{\xi-t}$ for any $\xi\in\Gamma_N$.

\begin{lem} \label{lem:RegularizationLemma}
Let $r\in\N$, $\ell\in\N_0$ and
$\ga_1, \ldots, \ga_r\in  \{\emph{\text{$\tx$}}_\xi: \xi\in\Gamma_N\}\cup\{\emph{\text{$\ta$}}\}$ with $\ga_1\ne \emph{\text{$\ta$}}$ and $\ga_r\ne\emph{\text{$\tx$}}_1$, namely, $\bfga=\ga_1 \dots \ga_r$ is admissible.
Suppose $\eps\in(0,1)$ and the function $\tau(\eps)=O(\eps)$
satisfies $\log \tau(\eps)=A+\gl \log \eps+O(\eps)$
for some positive $\gl\in\Q$, $A=0$ or $A\in\CMZV^N_1$. Then we have
\begin{equation*}
\int_{\tau(\eps)}^1 \emph{\text{$\ta$}}^\ell \bfga  = P_\ell(\log \eps)+O(\eps\log^{l+r-1} \eps)
\end{equation*}
for some polynomial $P_\ell(T)\in \CMZV^N[T]$. Moreover, the coefficient of $T^{\ell-j}$ has weight $j+r$ for all $0\le j\le \ell$.
\end{lem}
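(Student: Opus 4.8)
The plan is to prove the statement by induction on the number $\ell$ of leading $\ta$'s, keeping the admissible word $\bfga$ of arbitrary length $r$ universally quantified, and to extract everything from the single shuffle identity $\bigl(\int_\tau^1\ta\bigr)\bigl(\int_\tau^1\ta^{\ell-1}\bfga\bigr)=\int_\tau^1(\ta\sha\ta^{\ell-1}\bfga)$, where I abbreviate $\tau=\tau(\eps)$ and use $\int_\tau^1\ta=-\log\tau$. For the base case $\ell=0$ the word is $\bfga$ itself; since $\ga_1\ne\ta$ its form nearest $0$ is some $\tx_\xi$ with $\xi\in\Gamma_N$, which is bounded there, and since $\ga_r\ne\tx_1$ the integral converges at $1$ as well, so $\int_0^1\bfga\in\CMZV_r^N$. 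A routine estimate, in which the bounded innermost form guarantees convergence at $0$, gives $\int_0^\tau\bfga=O(\tau\log^{\,r-1}\tau)$, whence $\int_\tau^1\bfga=\int_0^1\bfga+O(\eps\log^{\,r-1}\eps)$ after inserting $\tau=O(\eps)$; thus $P_0(T)=\int_0^1\bfga$ is a weight-$r$ constant, as claimed.

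For the inductive step I expand the shuffle: the extra $\ta$ falls into one of the $\ell$ gaps at or before the junction with $\ga_1$, each yielding $\ta^\ell\bfga$, or into one of the $r$ gaps after $\ga_1,\dots,\ga_r$, yielding $\ta^{\ell-1}W_s'$ where $W_s'$ is $\bfga$ with one $\ta$ inserted just after $\ga_s$ (appended at the end when $s=r$). This gives the recursion
\begin{equation*}
\ell\int_\tau^1\ta^\ell\bfga=(-\log\tau)\int_\tau^1\ta^{\ell-1}\bfga-\sum_{s=1}^r\int_\tau^1\ta^{\ell-1}W_s'.
\end{equation*}
Each $W_s'$ again begins with $\ga_1\ne\ta$ and ends with $\ga_r\ne\tx_1$ (or with $\ta\ne\tx_1$), hence is admissible of length $r+1$, so both $\int_\tau^1\ta^{\ell-1}\bfga$ and each $\int_\tau^1\ta^{\ell-1}W_s'$ fall under the induction hypothesis at level $\ell-1$. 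Substituting $-\log\tau=-\gl\log\eps-A+O(\eps)$ and expanding, the polynomial parts assemble (after dividing by $\ell\in\Q$) into a degree-$\ell$ member $P_\ell(T)\in\CMZV^N[T]$; every product of $-\log\tau=O(\log\eps)$ with an inductive error is $O(\eps\log^{\,\ell+r-1}\eps)$, and the $\ta^{\ell-1}W_s'$ errors are already $O(\eps\log^{\,(\ell-1)+(r+1)-1}\eps)=O(\eps\log^{\,\ell+r-1}\eps)$, so the total error has the required size.

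The weight refinement I track through three channels. Multiplying the degree-$(\ell-1)$ polynomial, whose $T^{(\ell-1)-j'}$-coefficient has weight $j'+r$ by hypothesis, by $\gl\log\eps$ preserves weight and raises the $T$-degree by one; multiplying by $A\in\CMZV_1^N$ raises weight by one and fixes the $T$-degree, staying inside $\CMZV^N$ because the shuffle product of iterated integrals shows $\CMZV^N$ is a weight-graded $\Q$-algebra with $\CMZV_1^N\cdot\CMZV_{j'+r}^N\subseteq\CMZV_{j'+r+1}^N$; and the $W_s'$ contributions carry weight $j'+(r+1)$ at $T$-degree $(\ell-1)-j'$. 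In all three cases the coefficient of $T^{\ell-j}$ lands in weight $j+r$, closing the induction; the degenerate case $A=0$ simply drops the middle channel.

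The step I expect to be the main obstacle is the simultaneous control of the error exponent and the weight grading as they pass through the recursion: one must verify that the hypotheses $\gl\in\Q$ and $A\in\CMZV_1^N$ (or $A=0$) are precisely what guarantee that substituting $\log\tau=A+\gl\log\eps+O(\eps)$ leaves all coefficients in $\CMZV^N$ with the stated weights, and that the bounded innermost form in the base case indeed yields the $O(\tau\log^{\,r-1}\tau)$ tail. Everything else is bookkeeping driven by the single shuffle identity.
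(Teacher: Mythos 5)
Your proposal is correct in substance, and its engine is the same as the paper's: shuffle regularization of the divergent prefix $\ta^\ell$ combined with the expansion $\log \tau(\eps)=A+\gl\log\eps+O(\eps)$. But your decomposition is genuinely different. The paper splits off the whole block at once, writing $\int_{\tau}^1\ta^\ell\bfga-\int_\tau^1\ta^\ell\cdot\int_\tau^1\bfga=\int_\tau^1\bigl(\ta^\ell\bfga-\ta^\ell\sha\bfga\bigr)$, evaluates $\int_\tau^1\ta^\ell=\frac{(-1)^\ell}{\ell!}\log^\ell\tau(\eps)$ explicitly, and replaces $\int_\tau^1\bfga$ by $\int_0^1\bfga$ via Chen's composition formula inside the inductive step; it phrases the induction as being on the total weight $r+\ell$, even though the correction words in $\ta^\ell\sha\bfga-\ta^\ell\bfga$ have the \emph{same} total weight and only fewer leading $\ta$'s, so the induction really runs on the number of leading $\ta$'s. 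Your single-letter recursion $\ell\int_\tau^1\ta^\ell\bfga=(-\log\tau)\int_\tau^1\ta^{\ell-1}\bfga-\sum_{s=1}^r\int_\tau^1\ta^{\ell-1}W_s'$ makes that induction variable explicit, keeps every word $W_s'$ admissible, and needs the comparison between $\int_\tau^1$ and $\int_0^1$ only once, in the base case $\ell=0$; what you give up is the closed form of the top part of $P_\ell$, namely $\frac{(-1)^\ell}{\ell!}\bigl(A+\gl\log\eps\bigr)^\ell\int_0^1\bfga$, which the paper's block version produces for free. Your three-channel weight bookkeeping (multiplication by $\gl T$, multiplication by $A\in\CMZV^N_1$, and the length-$(r+1)$ words $W_s'$) is correct and closes the induction.

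One step needs repair. In the base case you pass from $\int_0^\tau\bfga=O(\tau\log^{r-1}\tau)$ to $\int_\tau^1\bfga=\int_0^1\bfga+O(\eps\log^{r-1}\eps)$ as if iterated integrals were interval-additive; for $r\ge 2$ they are not. The correct statement is Chen's path-composition formula
\begin{equation*}
\int_0^1\bfga-\int_\tau^1\bfga=\int_0^\tau\bfga+\sum_{j=1}^{r-1}\int_0^\tau\ga_1\cdots\ga_j\int_\tau^1\ga_{j+1}\cdots\ga_r,
\end{equation*}
and the cross terms must be estimated as well. They satisfy the same bound: since $\ga_1\ne\ta$, each $\int_0^\tau\ga_1\cdots\ga_j$ is a multiple polylogarithm evaluated at arguments of size $O(\eps)$, hence $O(\eps)$, while $\int_\tau^1\ga_{j+1}\cdots\ga_r=O(\log^{r-1}\eps)$ because at worst the tail begins with a string of $\ta$'s; so each cross term is $O(\eps\log^{r-1}\eps)$. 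This is precisely how the paper argues, and with this one line inserted your proof is complete.
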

\begin{proof} We proceed by induction on the total weight $w=r+\ell$. Let $\sha$ denote the shuffle
product of words used when multiplying iterated integrals first studied by K.T. Chen \cite{KTChen1977}.

The case $w=1$ is obvious. In general, if $\ell=0$ then this is trivial since
$\bfga$ is admissible and the value $\int_0^1\bfga$ is finite.
If $\ell\ge 1$ then we see that
\begin{align} \label{equ:aell1}
  \int_{\tau(\eps)}^1 \ta^\ell \bfga- \int_{\tau(\eps)}^1 \ta^\ell\int_{\tau(\eps)}^1\bfga
=&\, \int_{\tau(\eps)}^1 ( \ta^\ell \bfga  - \ta^\ell \sha\bfga)
=Q_{l-1}[\log \eps]
\end{align}
where by induction $Q_{l-1}[T]\in \CMZV^N[T]$ with its coefficient of $T^{\ell-j}$
having weight $j+r$ for all $1\le j\le \ell$. But $\log \tau(\eps)=A+\gl \log \eps+O(\eps)$ and therefore
\begin{equation}\label{equ:aell2}
\int_{\tau(\eps)}^1 \ta^\ell=\frac{(-1)^\ell}{\ell!} \log^\ell\big(\tau(\eps)\big)
=\frac{(-\gl)^\ell}{\ell!} \big( A+\gl\log \eps \big)^l +O(\eps\log^{l-1} \eps).
\end{equation}
On the other hand,
\begin{align*}
  \int_0^1\bfga- \int_{\tau(\eps)}^1 \bfga
=  \int_0^{\tau(\eps)} \bfga +\sum_{j=1}^{r-1} \int_0^{\tau(\eps)} \ga_1 \dots \ga_j
\int_{\tau(\eps)}^1 \ga_{j+1} \dots \ga_r.
\end{align*}
For all $j\le r$, since $\ga_1\ne \ta$ we may assume $\ga_1 \dots \ga_j=\tx_{\xi_1} \ta^{s_1-1}\dots\tx_{\xi_d} \ta^{s_d-1}$
for some $(s_1,\dots,s_d)\in\N^d$. Then $\int_0^{\tau(\eps)} \ga_1 \dots \ga_j=Li_{s_1,\dots,s_d}(\tau(\eps)/\xi_1,\dots)=O(\eps)$.
Furthermore, by induction we see that $\int_{\tau(\eps)}^1 \ga_{j+1} \dots \ga_r=O(\log^{r-1} \eps)$.
Hence
\begin{align} \label{equ:aell3}
\int_0^1 \bfga- \int_{\tau(\eps)}^1\bfga= O(\eps\log^{r-1} \eps).
\end{align}
Combining \eqref{equ:aell2} and  \eqref{equ:aell3} we have
\begin{align*}
\int_{\tau(\eps)}^1  \ta^\ell \int_{\tau(\eps)}^1  \bfga
= \int_{\tau(\eps)}^1  \ta^\ell \int_0^1 \bfga +O(\eps \log^{\ell+r-1}\eps)
= \frac{(-\gl)^\ell}{\ell!} \big( A+\gl\log \eps \big)^l \int_0^1 \bfga+O(\eps \log^{\ell+r-1}\eps),
\end{align*}
where $\int_0^1\bfga\in\CMZV_{r}^N$. Together with  \eqref{equ:aell1} this yields that
\begin{align*}
\int_{\tau(\eps)}^1   \ta^\ell \bfga
=&\, \frac{(-\gl)^\ell}{\ell!} \big( A+\gl\log \eps \big)^l \int_0^1 \bfga+Q_{\ell-1}(\log \eps)+O(\eps \log^{\ell+r-1}\eps)\\
=&\,  P_{\ell}(\log \eps)+O(\eps \log^{\ell+r-1}\eps).
\end{align*}
This completes the proof of the lemma.
\end{proof}

\begin{lem} \label{lem:Limit}
Suppose $m,p,r\in\N$ and $\bfk\in\N^r$. If a function $f(n,p)$ satisfies that there is a constant $C_p$ such that
$|f(n,p)|<C_p$ for all $n$ then we have
\begin{align*}%\label{equ:tstarLimit0}
\lim_{x\to 1^-} \sum_{n=1}^\infty \frac{1}{4^n}\binn \frac{f(n,p) \big(\ze^\star_n(\bfk)-\ze^\star_n(\bfk;x)\big)}{n^{m}}=0,\\
%\label{equ:tstarLimit1}
\lim_{x\to 1^-} \sum_{n=1}^\infty \frac1{4^n}\binn\frac{f(n,p)\big(t^\star_{n}(\bfk)-t^\star_{n}(\bfk;x)\big)}{(2n+1)^m}
=0,\\
%\label{equ:tstarLimit2}
\lim_{x\to 1^-} \sum_{n=1}^\infty \frac{4^{n}f(n,p)\big(t^\star_n(\bfk)-t^\star_n(\bfk;x)\big)}{\binn n^{m+1}}
= 0,
\end{align*}
where
\begin{equation*}
t^\star_n(\bfk;x):=\sum\limits_{n\geq n_1\geq\dotsm \geq n_r\geq 1}\frac{x^{2n_r-1}}{(2n_1-1)^{k_1}\cdots (2n_r-1)^{k_r}}.
\end{equation*}
\end{lem}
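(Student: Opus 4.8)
The plan is to establish all three limits simultaneously, since in each case the $n$th summand is the product of an $x$-independent factor with one of the differences
\begin{align*}
\ze^\star_n(\bfk)-\ze^\star_n(\bfk;x)&=\sum_{n\ge n_1\ge\cdots\ge n_r\ge1}\frac{1-x^{n_r}}{n_1^{k_1}\cdots n_r^{k_r}},\\
t^\star_n(\bfk)-t^\star_n(\bfk;x)&=\sum_{n\ge n_1\ge\cdots\ge n_r\ge1}\frac{1-x^{2n_r-1}}{(2n_1-1)^{k_1}\cdots (2n_r-1)^{k_r}},
\end{align*}
and the three series differ only in the elementary prefactors $\tfrac1{4^n}\binn$, $\tfrac1{4^n}\binn$, $\tfrac{4^n}{\binn}$ and in the exponent on $n$ (resp.\ $2n+1$). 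I would therefore reduce everything to the dominated convergence theorem for the counting measure on $\N$: it suffices to show that for each fixed $n$ the relevant summand tends to $0$ as $x\to1^-$, and that the whole family of summands is dominated, uniformly in $x\in[0,1)$, by a summable sequence.

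Both inputs are elementary. For fixed $n$ the displayed differences are \emph{finite} sums of the continuous functions $x\mapsto 1-x^{n_r}$ (resp.\ $x\mapsto 1-x^{2n_r-1}$), so they are continuous in $x$ and vanish at $x=1$; hence each summand tends to $0$ as $x\to1^-$. For the domination, $0\le 1-x^{n_r}\le1$ on $[0,1)$ gives $0\le\ze^\star_n(\bfk)-\ze^\star_n(\bfk;x)\le\ze^\star_n(\bfk)$, and likewise $0\le t^\star_n(\bfk)-t^\star_n(\bfk;x)\le t^\star_n(\bfk)$, both uniformly in $x$. Combining these with the hypothesis $|f(n,p)|<C_p$ shows that the three sequences of summands are dominated respectively by the terms of
\[
\sum_{n\ge1}\frac{C_p}{4^n}\binn\frac{\ze^\star_n(\bfk)}{n^{m}},\qquad
\sum_{n\ge1}\frac{C_p}{4^n}\binn\frac{t^\star_n(\bfk)}{(2n+1)^{m}},\qquad
\sum_{n\ge1}\frac{C_p\,4^n}{\binn}\frac{t^\star_n(\bfk)}{n^{m+1}}.
\]

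The only quantitative step is the convergence of these three majorants, and this is the point I expect to be the crux. I would use the standard central binomial estimates $\tfrac1{4^n}\binn=O(n^{-1/2})$ and $\tfrac{4^n}{\binn}=O(n^{1/2})$ together with the crude bounds $\ze^\star_n(\bfk)\le\ze^\star_n(1_r)\le H_n^r=O(\log^r n)$ and $t^\star_n(\bfk)\le t^\star_n(1_r)=O(\log^r n)$, where $H_n=\sum_{j=1}^n 1/j$ and both estimates follow by simply dropping the monotonicity constraints on the summation indices. Each general term is then $O(n^{-m-1/2}\log^r n)$, which is summable because $m\ge1$. With the majorants in hand, dominated convergence applies along any sequence $x_j\to1^-$ and delivers the three vanishing limits at once. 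Thus the substance of the argument lies not in any identity but in isolating the $x$-free majorant and verifying its summability through the central binomial asymptotics and the logarithmic growth of the (odd) harmonic star sums.
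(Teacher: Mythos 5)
Your proposal is correct and takes essentially the same route as the paper: both arguments reduce to an $x$-free summable majorant obtained from $\frac1{4^n}\binn=O(n^{-1/2})$, $\frac{4^n}{\binn}=O(n^{1/2})$ and the $O(\log^r n)$ growth of the (odd) harmonic star sums, and then exchange the limit $x\to1^-$ with the summation. The only cosmetic difference is that the paper phrases the exchange via absolute and uniform convergence on $(1/2,1]$ (Weierstrass M-test) while you invoke dominated convergence for counting measure, which is an equivalent formulation here.
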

\begin{proof} Let $x\in (1/2,1]$. Then
\begin{align*}
|\ze^\star_{n}(\bfk)-\ze^\star_{n}(\bfk;x)|
\le \sum_{n\geq n_1\geq n_2\geq \cdots\geq n_r>0}  (1-x^{n})\prod_{j=1}^r \frac{1}{n_{j}}
=(1-x^{n}) \ze^\star_{n}(1_r)
\ll  (1-x^{n}) \log^{r}(n)
\end{align*}
and
\begin{align*}
|t^\star_{n}(\bfk)-t^\star_{n}(\bfk;x)|
= &\,  \sum_{n\geq n_1\geq n_2\geq \cdots\geq n_r>0} (1-x^{2n_r-1})\prod_{j=1}^r \frac{1}{2n_{j}-1}\\
= &\, \sum_{2n\geq n_1\geq n_2\geq \cdots\geq n_r>0} (1-x^{n_r})\prod_{j=1}^r \frac{1-(-1)^{n_j}}{n_{j}} \\
\le &\, 2^r  \sum_{2n\geq n_1\geq n_2\geq \cdots\geq n_r>0} (1-x^{2n})\prod_{j=1}^r \frac{1}{n_{j}}  \ll  (1-x^{2n}) \log^{r}(n).
\end{align*}
Observing that
$
 \frac1{4^n}\binn \sim \frac1{\sqrt{\pi n}}
$
by Stirling's formula we see that for $m\in\N$ all the series
\begin{align*}
\sum_{n=1}^\infty \frac1{4^n}\binn \frac{t^\star_{n}(\bfk)-t^\star_{n}(\bfk;x)}{(2n+1)^{m}},  \quad
\sum_{n=1}^\infty  \frac{1}{4^n}\binn  \frac{\ze^\star_n(\bfk)-\ze^\star_n(\bfk;x)}{n^{m}}, \quad
\sum_{n=1}^\infty \frac{4^{n} \big(t^\star_n(\bfk)-t^\star_n(\bfk;x)\big)}{\binn n^{m+1}}
\end{align*}
converge absolutely and uniformly for $x\in(1/2,1]$, which yields the lemma immediately.
\end{proof}

\begin{thm} \label{thm-mzv-mzsv-cb-mmvs}
For any $p\in \N_0, r,m\in \N$ and $\bfk=(k_1,\ldots,k_r)\in \N^r$, we have
\begin{align}
\sum_{n=1}^\infty \frac{1}{4^n}\binn \frac{\ze^\star_n(\bfk)t_n(1_p)}{n^{m}}\in \CMZV_{|\bfk|+m+p}^2.
\end{align}
\end{thm}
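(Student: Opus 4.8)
The plan is to induct on the depth $r$, with Lemma~\ref{lem-mzv-mzsv-cb-mmvs} as the engine and the finite part of the $x\to1^-$ limit as the payload. Write $S(\bfk;m,p):=\sum_{n=1}^\infty \frac1{4^n}\binn \frac{\ze^\star_n(\bfk)t_n(1_p)}{n^m}$ and let $S_x(\bfk;m,p)$ be the same series with $\ze^\star_n(\bfk)$ replaced by $\ze^\star_n(\bfk;x)$. The base case $\bfk=\emptyset$ gives $\ze^\star_n(\emptyset)=1$, so $S(\emptyset;m,p)=\sum_n \frac1{4^n}\binn\frac{t_n(1_p)}{n^m}\in\Q[\log 2,\ze(2),\ze(3),\dots]\subset\CMZV^2_{m+p}$ by Thm.~\ref{thm-Apery-MRV} (and by \cite[Thm.~3.4]{WX2021} when $p=0$). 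For the inductive step I would invoke Lemma~\ref{lem-mzv-mzsv-cb-mmvs} with its parameter ``$m$'' replaced by $m-1$, which is allowed since $m\ge1$; this turns all denominators into $n^m$ and expresses
\[
(-1)^r S_x(\bfk;m,p)-\sum_{j=1}^r (-1)^j\Li_{\ola{\bfk}_{j}}(x)\,S(\ora{\bfk}_{j-1};m,p)
\]
as an explicit iterated integral over $[\sqrt{1-x},1]$ in the $1$-forms $\ta=\frac{dt}{t}$, $\txp=\frac{dt}{1-t}$ and $\txn=\frac{dt}{-1-t}$. The inner series of the second sum carries no $x$, so it is a combination of the lower-depth quantities $S(\ora{\bfk}_{j-1};m,p)$ (with $j-1<r$) having coefficients $\Li_{\ola{\bfk}_{j}}(x)$.

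Next I would regularize the iterated integral. Using $\frac{2t\,dt}{1-t^2}=\txp+\txn$, $\frac{2\,dt}{1-t^2}=\txp-\txn$ and $\frac{2\,dt}{1+t}=-2\txn$, its integrand expands into a $\Q$-linear combination of words in $\{\ta,\txp,\txn\}$; every word has total length $|\bfk|+m+p$ and uses only the roots of unity $\pm1$, so we remain at level $2$. Each word can be written $\ta^\ell\bfga$, where the leading $\ta^\ell$ collects the factors $\frac{dt}{t}$ forced out front by trailing $1$'s of $\bfk$, and $\bfga$ begins with $\txp$ or $\txn$ and ends with $\ta$ or $\txn$ (the rightmost block being $\chi_p(\frac{dt}{t})^p$), hence is admissible. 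Taking $\eps=\sqrt{1-x}$ so that the lower endpoint is $\tau(\eps)=\eps$ with $A=0$, $\gl=1$, Lemma~\ref{lem:RegularizationLemma} gives $\int_{\sqrt{1-x}}^1\ta^\ell\bfga=P_\ell(\log\eps)+o(1)$ with $P_\ell\in\CMZV^2[T]$ and constant term $P_\ell(0)$ of weight $\ell+\dep(\bfga)=|\bfk|+m+p$; since $\log\eps=\tfrac12\log(1-x)$, this is a polynomial in $\log(1-x)$. Summing over the words, the integral equals a polynomial in $\log(1-x)$ with $\CMZV^2$ coefficients whose constant term $c_0$ is homogeneous of weight $|\bfk|+m+p$.

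Finally I would take $x\to1^-$ on the left. By Lemma~\ref{lem:Limit} — applied with $f(n,p)=t_n(1_p)$, which is only $O(\log^p n)$ and so leaves the absolute/uniform convergence argument there intact — we get $S_x(\bfk;m,p)\to S(\bfk;m,p)$, a quantity carrying no $\log(1-x)$. Each $\Li_{\ola{\bfk}_{j}}(x)$ has the standard asymptotic expansion at $x=1$ as a polynomial in $\log(1-x)$ with shuffle-regularized MZV coefficients, its constant term $a_j\in\MZV_{|\ola{\bfk}_{j}|}\subset\CMZV^2$; and by induction $S(\ora{\bfk}_{j-1};m,p)\in\CMZV^2_{|\ora{\bfk}_{j-1}|+m+p}$. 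Since both sides of the identity are ``a polynomial in $\log(1-x)$ plus $o(1)$,'' comparing the constant terms yields
\[
(-1)^r S(\bfk;m,p)=c_0+\sum_{j=1}^r(-1)^j a_j\,S(\ora{\bfk}_{j-1};m,p).
\]
As $\CMZV^2$ is a weight-graded $\Q$-algebra, each product $a_j\,S(\ora{\bfk}_{j-1};m,p)$ is homogeneous of weight $|\ola{\bfk}_{j}|+|\ora{\bfk}_{j-1}|+m+p=|\bfk|+m+p$, matching $c_0$; hence $S(\bfk;m,p)\in\CMZV^2_{|\bfk|+m+p}$, closing the induction.

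The hard part will be the regularized limit itself: when $\bfk$ ends in $1$'s, both the iterated integral and the polylogarithms $\Li_{\ola{\bfk}_{j}}(x)$ blow up like powers of $\log(1-x)$, and one must verify that these divergences cancel so that a genuine finite part survives and lands in $\CMZV^2$ in the correct homogeneous weight. Lemma~\ref{lem:RegularizationLemma} is built precisely to furnish (with $\gl=1$, $A=0$) the polynomial expansion of the integral and the weight of its constant term, so the remaining care is in matching the $\log(1-x)$-expansions of the two sides and in the weight bookkeeping above; the only other point to watch is that the unbounded but merely polylogarithmic factor $t_n(1_p)$ does not spoil Lemma~\ref{lem:Limit}.
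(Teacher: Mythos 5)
Your proposal is correct and takes essentially the same route as the paper's own proof: Lemma~\ref{lem-mzv-mzsv-cb-mmvs} as the engine, Lemma~\ref{lem:RegularizationLemma} at level $2$ to regularize the iterated integral, Lemma~\ref{lem:Limit} plus the shuffle-regularized expansions of $\Li_{\ola{\bfk}_{j}}(x)$ to extract constant terms as $x\to 1^-$, and induction on $r$ with the same weight bookkeeping. In fact you are more explicit than the paper on two points it glosses over, namely the depth-zero base case (via Thm.~\ref{thm-Apery-MRV} and \cite[Thm.~3.4]{WX2021}) and the fact that $t_n(1_p)$ grows like $\log^p n$ rather than being bounded, so that Lemma~\ref{lem:Limit} is applied in a mildly extended form.
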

\begin{proof}
Taking $N=2$ and $\tau(\eps)=\sqrt{\eps}$ in Lemma \ref{lem:RegularizationLemma} we see that
the regularized value of the iterated integral in \eqref{eq-mzv-mzsv-cb-mmvs}
\begin{equation*}
\int_{\sqrt{\eps}}^1 \left( \frac{2tdt}{1-t^2}\right)^{k_r-1}\frac{dt}{t}\cdots\left( \frac{2tdt}{1-t^2}\right)^{k_1-1}\frac{dt}{t} \left( \frac{2tdt}{1-t^2}\right)^{m-1}\chi_p\left(\frac{dt}{t}\right)^p
\end{equation*}
is a multiple mixed value which lies in $\CMZV_{|\bfk|+m+p}^2$. Further, for all $\bfm=(m_1,\dots,m_d)\in\N^d$
\begin{equation*}
\ze^\star_\sha(\bfm;x)=\sum_{\circ=\text{``,'' or ``+''}} \ze_\sha(m_1\circ \dots\circ m_d;x),
\end{equation*}
where, by \cite[Lemma 13.3.29,Remark 13.3.23]{Zhao2016} each $\ze_\sha(\bfs;x)$ term is regarded as
a shuffle regularization of $\ze(\bfs)$ (setting $\bfs=(s_1,\dots,s_l)\in\N^l$)
\begin{equation*}
\ze_\sha(\bfs;x)=\Li_\bfs(1-\eps)=\int_0^{1-\eps} \tx_1\ta^{s_l-1}\cdots \tx_1\ta^{s_1-1}=F(\log \eps)+O(\eps\log^l \eps)
\end{equation*}
for some polynomial $F[T]\in \CMZV^1[T]$ whose constant term has weight $|\bfs|=|\bfm|$.
The theorem follows immediately from Lemma \ref{lem-mzv-mzsv-cb-mmvs} by induction on $r$ after
taking constant terms of the regularization of \eqref{eq-mzv-mzsv-cb-mmvs} (i.e., taking $x=1-\eps\to 1^-$),
using Lemma \ref{lem:Limit}.
\end{proof}

\begin{thm}  \label{thm-1stPower}
Suppose $m,p\in\N$ and $\bfk=(k_1,\ldots,k_r)\in \N^r$ with $r\in\N_0$, where $\bfk=\emptyset$ and $|\bfk|=0$ if $r=0$.
Set $t_n^\star(\emptyset)=1$. Then we have
\begin{align}\label{1stPowerMtV}
\sum_{n=1}^\infty \frac{\binn}{4^n}\frac{t_n(2_{p-1})t_n^\star(\bfk)}{(2n+1)^m}\in i\CMZV^4_{2p+m+|\bfk|-2},\\
\label{1stPowerMZV}
\sum_{n=1}^\infty \frac{4^{n}\ze_{n-1}(2_{p-1})t^\star_n(\bfk)}{\binn n^{m+1}}\in \CMZV^4_{2p+m+|\bfk|-1}.
\end{align}
\end{thm}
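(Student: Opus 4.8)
The plan is to run the argument of Thm.~\ref{thm-mzv-mzsv-cb-mmvs} again, but with the \emph{odd} star sum $t^\star_n(\bfk)$ playing the role that the ordinary star sum $\ze^\star_n(\bfk)$ played there; this interchange of ``even'' and ``odd'' data is precisely what raises the level from $2$ to $4$. First I would record, for $|x|<1$, an iterated-integral identity of the same shape as Lemma~\ref{lem-mzv-mzsv-cb-mmvs} but built on the partial sum $t^\star_n(\bfk;x)=\sum_{n\ge n_1\ge\cdots\ge n_r\ge1}x^{2n_r-1}\prod_j(2n_j-1)^{-k_j}$ (the quantity already appearing in Lemma~\ref{lem:Limit}). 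This needs the $t$-analogue of Lemma~\ref{thm-ItI1}: one expresses $t^\star_n(\bfk;x)$, together with its ``tail'' terms $t^\star_n(\ora{\bfk}_{j-1})$ times multiple $t$-polylogarithms of $x$, through iterated integrals in the odd one-forms $dt/(1-t^2)$, $t\,dt/(1-t^2)$ and $dt/t$, which follows from \eqref{FII1} after the change of variable passing to odd indices. I would then weight this $t$-version of \eqref{FII1} by the appropriate central-binomial generating series, sum over $n$, and use the $\arcsin$-type antiderivatives of $1/\sqrt{1-x^2}$ (the $x\mapsto x^2$ case of \eqref{Eq-GF-sqrt1-x}, and \eqref{BetaHalf} for $4^n/\binn$) to insert the factors $t_n(2_{p-1})$, $1/(2n+1)^m$ for the first series, resp.\ $\ze_{n-1}(2_{p-1})$, $4^n/\binn$, $1/n^{m+1}$ for the second, collapsing the $n$-sum into a single iterated integral.

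The decisive move is the substitution $t\mapsto 2u/(1+u^2)$, which rationalises the square root $\sqrt{1-t^2}$ carried by the central binomial: under it $dt/\sqrt{1-t^2}\mapsto 2\,du/(1+u^2)$, while $dt/t$ and $t\,dt/(1-t^2)$ become $\Q$-combinations of $du/u$, $du/(1\pm u)$, $du/(u-i)$ and $du/(u+i)$. Every resulting one-form is thus singular only at fourth roots of unity, giving the level-$4$ assertion. Writing $2\,du/(1+u^2)=\tfrac1{i}\bigl(du/(u-i)-du/(u+i)\bigr)$ isolates the source of $i$: the factor $\tfrac1{i}$ is produced \emph{only} by the one-forms coming from $\sqrt{1-t^2}$, and the central binomial fixes their number modulo $2$ — the series with $\tfrac1{4^n}\binn$ carries a single such form (an $\arcsin$), landing in $i\,\CMZV^4$, whereas the series with $4^n/\binn$ carries two (an $\arcsin^2$-type structure), so the two copies of $\tfrac1{i}$ cancel and it stays in $\CMZV^4$. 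Counting all one-forms then yields the stated weights $2p+m+|\bfk|-2$ and $2p+m+|\bfk|-1$ at once.

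To finish I would regularise. With $x=1-\eps$ the lower limit of the transformed integral is some $\tau(\eps)=O(\sqrt{\eps})$ with $\log\tau(\eps)=A+\tfrac12\log\eps+O(\eps)$, so Lemma~\ref{lem:RegularizationLemma} with $N=4$ applies (after checking admissibility of the word) and shows the regularised integral is a polynomial in $\log\eps$ whose coefficients are level-$4$ CMZVs of the correct weight. The tail terms involve $t^\star_n(\ora{\bfk}_{j-1})$ with strictly shorter compositions, so I would induct on the depth $r$, the leading multiple-mixed value supplying the base case; taking the constant term and letting $\eps\to0^+$ — which by Lemma~\ref{lem:Limit} may replace $t^\star_n(\bfk;x)$ by $t^\star_n(\bfk)$ — gives the two memberships.

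The hardest part will be the construction in the first two paragraphs: pinning down the exact iterated-integral representatives of the two generating series (the $t$-analogue of Lemma~\ref{thm-ItI1}, and the insertion of $t_n(2_{p-1})$ resp.\ $\ze_{n-1}(2_{p-1})$ as $p-1$ nested blocks), verifying that $t\mapsto 2u/(1+u^2)$ sends the lower limit to an admissible $\tau(\eps)$ meeting the hypotheses of Lemma~\ref{lem:RegularizationLemma}, and — most delicately — controlling the parity of the $\sqrt{1-t^2}$ one-forms carefully enough to certify the factor $i$ in \eqref{1stPowerMtV} but its absence in \eqref{1stPowerMZV}.
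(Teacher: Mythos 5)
Your outline is essentially the paper's own proof. The three ingredients you postulate are exactly the ones used there: the $t$-analogue of Lemma~\ref{thm-ItI1} is \eqref{FIIt1} (quoted from \cite[Thm.~3.6]{XuZhao2021a}); the insertion of $t_n(2_{p-1})$ resp.\ $\ze_{n-1}(2_{p-1})$ via arcsin-type iterated integrals is Berndt's expansions \eqref{Arcsin-Apery1}--\eqref{Arcsin-Apery2} together with the functions $F_{p,m}$ of \eqref{defn-Fpm}; and the level-$4$ rationalization, the parity argument that produces a factor $i$ precisely when the number of $dt/\sqrt{1-t^2}$ forms is odd, the weight count, the induction on $r$, and the appeal to Lemma~\ref{lem:Limit} all coincide with the paper's steps. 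One small inaccuracy: \eqref{FIIt1} does \emph{not} follow from \eqref{FII1} by a change of variables (substituting $t\to t^2$ in \eqref{FII1} produces numerators $t^{2n+1}$ and even-indexed sums); it is a parallel identity proved by the same method in the cited reference.

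The one step that fails as written is the regularization. Your substitution $t\mapsto 2u/(1+u^2)$ fixes both endpoints ($t=0\leftrightarrow u=0$, $t=1\leftrightarrow u=1$): solving $2u/(1+u^2)=1-\eps$ gives $u=1-\sqrt{2\eps}+O(\eps)$, so the transformed integral runs from $0$ to $1-\sqrt{2\eps}+O(\eps)$ and the divergence sits at the \emph{upper} endpoint. There is no lower-limit cutoff $\tau(\eps)=O(\sqrt{\eps})$, and Lemma~\ref{lem:RegularizationLemma}, which regularizes integrals $\int_{\tau(\eps)}^1 \ta^\ell\bfga$ whose $\ta$-singularities lie at the bottom, does not apply to your integral as stated. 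Two repairs are available: (a) compose your substitution with the involution $u\mapsto\frac{1-u}{1+u}$; the composite is exactly the paper's change of variables $t\mapsto\frac{1-t^2}{1+t^2}$, which swaps $0$ and $1$ and yields the cutoff $\tau(\eps)=\sqrt{\eps/(2-\eps)}$ with $\log\tau(\eps)=-\tfrac12\log 2+\tfrac12\log\eps+O(\eps)$, meeting the hypotheses of Lemma~\ref{lem:RegularizationLemma} with $\gl=\tfrac12$ and $A=-\tfrac12\log 2\in\CMZV_1^4$; or (b) keep your substitution and regularize at the upper endpoint via the standard shuffle regularization of $\Li_{\bfs}(1-\eps)$, as the paper does for $\ze_\sha(\bfs;x)$ in the proof of Thm.~\ref{thm-mzv-mzsv-cb-mmvs}. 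With either repair your argument goes through and reproduces the paper's proof.
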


\begin{proof}
By the proof of \cite[p.~262, Prop.~15]{B1985}, for $p\in \N$ we have
\begin{align} \label{Arcsin-Apery1}
&\frac{(\arcsin x)^{2p-1}}{(2p-1)!}=\sum_{n=p-1}^\infty \frac{\binn t_n(2_{p-1})}{4^n} \frac{x^{2n+1}}{2n+1}
\end{align}
and
\begin{align}\label{Arcsin-Apery2}
&\frac{(\arcsin x)^{2p}}{(2p)!}=\sum_{n=p-1}^\infty  \frac{4^{n+1-p}\ze_n(2_{p-1})}{(2n+1)\binn} \frac{x^{2n+2}}{2n+2}=\sum_{n=p}^\infty \frac{4^{n-p}\ze_{n-1}(2_{p-1})}{n^2\binn}x^{2n}.
\end{align}
Moreover, we have the following iterated integral
\begin{align*}%\label{Arcsin-Iterated-Integral}
(\arcsin x)^p &=\left(\int_0^x\frac{dt}{\sqrt{1-t^2}}\right)^p=p!\int_{0}^x \left(\frac{dt}{\sqrt{1-t^2}}\right)^p.
\end{align*}
Let
\begin{equation}\label{defn-Fpm}
F_{p,m}(x):=\int_0^x \left(\frac{dt}{\sqrt{1-t^2}}\right)^p\left(\frac{dt}{t}\right)^{m-1}.
\end{equation}
Then  \eqref{Arcsin-Apery1} and \eqref{Arcsin-Apery2} yield
\begin{align}
&F_{2p-1,m}(x)=\sum_{n=p-1}^\infty \frac{\binn t_n(2_{p-1})}{4^n} \frac{x^{2n+1}}{(2n+1)^{m}}, %\label{Eq-Defn-Fpm-1}\\
&F_{2p,m}(x)=\sum_{n=p}^\infty \frac{4^{n-p}\ze_{n-1}(2_{p-1})}{2^{m-1} \binn n^{m+1}}x^{2n}.      \label{Eq-Defn-Fpm-2}
\end{align}

By \cite[Thm.~3.6]{XuZhao2021a}, for any $\bfk=(k_1,\ldots,k_r)\in\N^r$, $n\in\N$ and $|x|<1$, we have
\begin{align}\label{FIIt1}
&\int_0^x \frac{t^{2n}dt}{1-t^2} \left(\frac{dt}{t}\right)^{k_1-1}\frac{tdt}{1-t^2} \left(\frac{dt}{t}\right)^{k_2-1}\cdots \frac{tdt}{1-t^2}\left(\frac{dt}{t}\right)^{k_r-1}\nonumber\\
=&\,(2n-1)\int_0^x t^{2n-2}dt \frac{tdt}{1-t^2} \left(\frac{dt}{t}\right)^{k_1-1}\frac{tdt}{1-t^2} \left(\frac{dt}{t}\right)^{k_2-1}\cdots \frac{tdt}{1-t^2}\left(\frac{dt}{t}\right)^{k_r-1}\nonumber\\
=&\,\sum_{j=1}^r (-1)^{j-1} t^\star_n(\ora\bfk_{\hskip-2pt j-1})t(\ola\bfk_{\hskip-2pt j};x)+(-1)^r t^\star_n(\bfk;x).
\end{align}
Multiplying \eqref{FIIt1} by
$\frac{\binn t_n(2_{p-1})}{4^n(2n+1)^m}$ and $\frac{4^{n-p}\ze_{n-1}(2_{p-1})}{2^{m-1} \binn n^{m+1}}$, respectively, then summing up, we see that
\begin{align}\label{formula-tmhss-mtv-apery-1}
&\sum_{j=1}^{r} (-1)^{j-1} t(\overleftarrow{\bfk}_j;x)\sum_{n=1}^\infty \frac{\binn}{4^n}\frac{t_n(2_{p-1})t_n^\star(\ora{\bfk}_{j-1})}{(2n+1)^m}
+(-1)^r \sum_{n=1}^\infty \frac{\binn}{4^n}\frac{t_n(2_{p-1})t_n^\star(\bfk;x)}{(2n+1)^m} \nonumber\\
=&\,\int_0^x \frac{F_{2p-1,m}(t)dt}{t(1-t^2)} \left(\frac{dt}{t}\right)^{k_1-1} \frac{tdt}{1-t^2} \left(\frac{dt}{t}\right)^{k_2-1}\cdots \frac{tdt}{1-t^2} \left(\frac{dt}{t}\right)^{k_r-1}\nonumber\\
=&\,\int_0^x \left(\frac{dt}{\sqrt{1-t^2}}\right)^{2p-1}\left(\frac{dt}{t}\right)^{m-1}\frac{dt}{t(1-t^2)} \left(\frac{dt}{t}\right)^{k_1-1} \frac{tdt}{1-t^2} \left(\frac{dt}{t}\right)^{k_2-1}\cdots \frac{tdt}{1-t^2} \left(\frac{dt}{t}\right)^{k_r-1}
\end{align}
and by \eqref{defn-Fpm}
\begin{align}\label{formula-tmhss-mtv-apery-2}
&\sum_{j=1}^{r} (-1)^{j-1} t(\overleftarrow{\bfk}_j;x)\sum_{n=1}^\infty \frac{4^{n-p}\ze_{n-1}(2_{p-1})t^\star_n(\ora{\bfk}_{j-1})}{2^{m-1} \binn n^{m+1}}
+(-1)^r \sum_{n=1}^\infty \frac{4^{n-p}\ze_{n-1}(2_{p-1})t^\star_n(\bfk;x)}{2^{m-1} \binn n^{m+1}}\nonumber\\
=&\,\int_0^x \frac{F_{2p,m}(t)dt}{1-t^2} \left(\frac{dt}{t}\right)^{k_1-1} \frac{tdt}{1-t^2} \left(\frac{dt}{t}\right)^{k_2-1}\cdots \frac{tdt}{1-t^2} \left(\frac{dt}{t}\right)^{k_r-1}\nonumber\\
=&\,\int_0^x \left(\frac{dt}{\sqrt{1-t^2}}\right)^{2p}\left(\frac{dt}{t}\right)^{m-1}\frac{dt}{t(1-t^2)} \left(\frac{dt}{t}\right)^{k_1-1} \frac{tdt}{1-t^2} \left(\frac{dt}{t}\right)^{k_2-1}\cdots \frac{tdt}{1-t^2} \left(\frac{dt}{t}\right)^{k_r-1}.
\end{align}

Applying $t\to \frac{1-t^2}{1+t^2}$ to \eqref{formula-tmhss-mtv-apery-1} and \eqref{formula-tmhss-mtv-apery-2}, setting $\ty=\tx_{-i}+\tx_{i}-\tx_{-1}-\tx_{1}$ and $\tz=-\ta-\tx_{-i}-\tx_{i}$, we get
\begin{alignat}{4} \label{equ:changeVar1}
  \ta=\frac{dt}{t}&\,  \to -\left(\frac{2tdt}{1+t^2}+\frac{2tdt}{1-t^2} \right)=\ty, &
\qquad
\om_2=&\, \frac{dt}{1-t^2}\to -\frac{dt}{t}=-\ta, \\
 \frac{dt}{\sqrt{1-t^2}} &\,  \to -\frac{2dt}{1+t^2}=i(\tx_{-i}-\tx_{i}), &
\qquad\om_3=&\, \frac{tdt}{1-t^2}\to -\left(\frac{dt}{t}-\frac{2tdt}{1+t^2}\right)=\tz,   \label{equ:changeVar2}\\
\frac{dt}{t(1-t^2)}&\,=\frac{dt}{t}+\frac{tdt}{1-t^2}  \to \ty+\tz=-\ta-& \tx_{-1}-\tx_{1}.
\label{equ:changeVar3}
\end{alignat}
Therefore, setting
\begin{equation*}
\tau(\eps):=\sqrt{\frac{\eps}{2-\eps}}
\end{equation*}
we have
\begin{align}\label{formula-tmhss-mtv-apery-cmzv-1}
&\sum_{j=1}^{r} (-1)^{j-1} t(\overleftarrow{\bfk}_j;1-\eps)\sum_{n=1}^\infty \frac{\binn}{4^n}\frac{t_n(2_{p-1})t_n^\star(\ora{\bfk}_{j-1})}{(2n+1)^m}
+\sum_{n=1}^\infty \frac{\binn}{4^n}\frac{t_n(2_{p-1})t_n^\star(\bfk;1-\eps)}{(2n+1)^m}
\nonumber\\
&=(-1)^{|\bfk|+m} \int_{\tau(\eps)}^1 \ty^{k_r-1}\tz\cdots \ty^{k_2-1}\tz\ty^{k_1-1}(\ty+\tz)y^{m-1}(i(\tx_{-i}-\tx_i))^{2p-1}
\end{align}
which lies in $i\CMZV^4[T]$ after regularization by Lemma \ref{lem:RegularizationLemma} with $N=4$, and
\begin{align}\label{formula-tmhss-mtv-apery-cmzv-2}
&\sum_{j=1}^{r} (-1)^{j-1} t(\overleftarrow{\bfk}_j;1-\eps)\sum_{n=1}^\infty \frac{4^{n-p}\ze_{n-1}(2_{p-1})t^\star_n(\ora{\bfk}_{j-1})}{2^{m-1} \binn n^{m+1}}
+\sum_{n=1}^\infty \frac{4^{n-p}\ze_{n-1}(2_{p-1})t^\star_n(\bfk;1-\eps)}{2^{m-1} \binn n^{m+1}}
\nonumber\\
&=(-1)^{|\bfk|+m-1}\int_{\tau(\eps)}^1 \ty^{k_r-1}\tz\cdots \ty^{k_2-1}\tz\ty^{k_1-1}\ta\ty^{m-1}(i(\tx_{-i}-\tx_i))^{2p}
\end{align}
which lies in $\CMZV^4[T]$ after regularization by Lemma \ref{lem:RegularizationLemma} with $N=4$.
Here we need the fact that $\log 2 =-\Li_1(-1)\in\CMZV_1^4$.
Hence, by induction on $r$ using the constant terms of the regularization of \eqref{formula-tmhss-mtv-apery-cmzv-1} and \eqref{formula-tmhss-mtv-apery-cmzv-2}, and Lemma \ref{lem:Limit},
we finish the proof of the  theorem.
\end{proof}

\begin{re}
One may want to compare statement \eqref{1stPowerMZV} with \cite[Thm. 4.1]{Au2020} in which
the product is replaced by multiple harmonic sums. Furthermore, Au
showed that the level of the CMZVs can be lowered to 2 in that case.
\end{re}

\begin{cor} \label{cor-1stPower}
For any $p,m,r\in\N$ and $\bfk\in\N^r$ or $\bfk=\emptyset$, we have
\begin{align}\label{equ:1stPowerMtV}
\sum_{n=1}^\infty \frac1{4^n}\binn\frac{t_n(\bfk)}{(2n+1)^{m}} &\, \in  i\CMZV^{4}_{|\bfk|+m},\\
\label{equ:1stPowerMZV}
\sum_{n=1}^\infty \frac{4^{n}}{\binn}\frac{t_n(\bfk)}{n^{m+1}} &\, \in  \CMZV^{4}_{|\bfk|+m+1},
\end{align}
where $t_n(\emptyset)=1$ and $|\emptyset|=0$.
\end{cor}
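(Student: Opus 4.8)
The plan is to derive both memberships as immediate consequences of Theorem~\ref{thm-1stPower}, by specializing the parameter $p$ there to $p=1$ and then converting star sums into ordinary sums. First I would set $p=1$ in Theorem~\ref{thm-1stPower}. Since $2_{p-1}=2_0=\emptyset$ in that case, we have $t_n(2_{p-1})=t_n(\emptyset)=1$ and $\ze_{n-1}(2_{p-1})=\ze_{n-1}(\emptyset)=1$, so \eqref{1stPowerMtV} and \eqref{1stPowerMZV} collapse to
\begin{equation*}
\sum_{n=1}^\infty \frac1{4^n}\binn\frac{t_n^\star(\bfk)}{(2n+1)^m}\in i\CMZV^4_{|\bfk|+m}
\quad\text{and}\quad
\sum_{n=1}^\infty \frac{4^n}{\binn}\frac{t_n^\star(\bfk)}{n^{m+1}}\in\CMZV^4_{|\bfk|+m+1}
\end{equation*}
for every composition $\bfk$, with the convention $t_n^\star(\emptyset)=1$. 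Taking $\bfk=\emptyset$ here already settles that case of the corollary, since then $t_n^\star(\emptyset)=t_n(\emptyset)=1$.

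It remains to pass from the star sums to the non-star sums. For this I would use the elementary weight-preserving identity
\begin{equation*}
t_n(\bfk)=\sum_{\bfr\preceq\bfk}(-1)^{\dep(\bfk)-\dep(\bfr)}\,t_n^\star(\bfr),
\end{equation*}
valid for every $n$, where $\bfr$ runs over all compositions coarser than $\bfk$, that is, those obtained from $\bfk=(k_1,\dots,k_r)$ by replacing some of the internal commas with plus signs. The decisive point is that every such coarsening satisfies $|\bfr|=|\bfk|$, because merging consecutive parts leaves the total weight unchanged, while $\dep(\bfr)\le r$. Substituting this identity into each target series and interchanging the finite outer sum over $\bfr$ with the (absolutely convergent) summation over $n$ expresses each series as a finite $\Q$-linear combination of the series treated in the first step, with $\bfk$ replaced by its coarsenings $\bfr$.

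Each such piece lies in $i\CMZV^4_{|\bfr|+m}=i\CMZV^4_{|\bfk|+m}$ (respectively in $\CMZV^4_{|\bfr|+m+1}=\CMZV^4_{|\bfk|+m+1}$), and since $\CMZV^4_w$ is a $\Q$-vector space, the corollary follows at once, giving \eqref{equ:1stPowerMtV} and \eqref{equ:1stPowerMZV}. I do not expect any genuine obstacle: both steps are purely formal. The one point that must be checked carefully is the weight bookkeeping, namely verifying that every coarsening of $\bfk$ has the same weight $|\bfk|$, so that all the summands land in a single graded component rather than spreading across several; this homogeneity is precisely what produces the sharp weight in the conclusion.
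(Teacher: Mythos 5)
Your proposal is correct and follows essentially the same route as the paper: specialize Thm.~\ref{thm-1stPower} to $p=1$ (so the $t_n(2_{p-1})$ and $\ze_{n-1}(2_{p-1})$ factors become $1$), then convert $t_n(\bfk)$ into a signed sum of star sums over coarsenings, which is exactly the paper's identity \eqref{equ:starNonStar} obtained by inclusion--exclusion from \eqref{equ:starNonStar2}. Your explicit check that coarsening preserves weight, so that all terms land in the single graded piece $\CMZV^4_{|\bfk|+m}$ (resp.\ $\CMZV^4_{|\bfk|+m+1}$), is the same bookkeeping the paper leaves implicit.
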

\begin{proof} Setting $p=1$ in Thm.~\ref{thm-1stPower} we see that the corollary follows from the fact
that
\begin{equation}\label{equ:starNonStar}
t_n(\bfk)=\sum_{\circ=\text{``,'' or ``+''}} (-1)^{\sharp\{\circ=\text{``+''}\}} t_n^\star(k_1\circ \cdots\circ k_r)
\end{equation}
which, in turn, follows easily from
\begin{equation}\label{equ:starNonStar2}
t_n^\star(\bfk)=\sum_{\circ=\text{``,'' or ``+''}}   t_n (k_1\circ \cdots\circ k_r)
\end{equation}
by the Principle of Inclusion and Exclusion.
\end{proof}

\begin{exa} Explicitly, set $\bfk=(1)$ in \eqref{equ:1stPowerMtV} we have
\begin{align*}
&\,\sum_{n=1}^\infty \frac1{4^n}\binn  \frac{t_{n}(1)}{(2n+1)^m}\\
=&\,
i(-1)^{m}\int_0^1\left(
\ty^{m-1}(\tx_{-i}-\tx_i)\ta+\sum_{j=1}^{m-1}  \ty^{m-j} \ta \ty^{j-1} (\tx_{-i}-\tx_i)
- (\tx_1+\tx_{-1})\ty^{m-1} (\tx_{-i}-\tx_i)\right).
\end{align*}
Further setting $m=1,2$  we get
\begin{align*}
\sum_{n=1}^\infty \frac1{4^n}\binn \frac{t_n(1)}{2n+1} &\,=4\big(G+{\rm Im}\Li_{1,1}(-i,i))\big) \approx 1.088793045,\\
\sum_{n=1}^\infty \frac1{4^n}\binn \frac{t_n(1)}{(2n+1)^2} &\,=
i\int_0^1\left( \ty(\tx_{-i}-\tx_i)\ta+\ty\ta (\tx_{-i}-\tx_i)
- (\tx_1+\tx_{-1})\ty(\tx_{-i}-\tx_i)\right)  \\
&\, \approx 0.108729731954.
\end{align*}
In the first equation, we have used the identity \cite[p.\ 48, (2.44)]{Zhao2016}:
$$
\Li_{1,1}(x,y)=\Li_2\Bigl(\frac{xy-x}{1-x}\Bigr)-\Li_2\Bigl(\frac{x}{x-1}\Bigr)-\Li_2(xy).
$$
Setting $\bfk=(2)$, $p=1$ and $m=1,2$ in \eqref{1stPowerMtV} we get
\begin{align*}
\sum_{n=1}^\infty \frac1{4^n}\binn \frac{t_n(2)}{2n+1} &\,=
-it(2)\int_0^1 (\tx_{-i}-\tx_{i})-i\int_0^1 \ty(\ta+\tx_{-1}+\tx_{1})(\tx_{-i}-\tx_{i})\approx 0.6459640977,\\
\sum_{n=1}^\infty \frac1{4^n}\binn \frac{t_n(2)}{(2n+1)^2} &\,=
it(2)\int_0^1 \ty(\tx_{-i}-\tx_{i}) +i\int_0^1 \ty(\ta+\tx_{-1}+\tx_{1})\ty(\tx_{-i}-\tx_{i})\approx 0.0937132114.
\end{align*}
Setting $\bfk=(2)$, $p=1,2$ and $m=1$ in \eqref{1stPowerMZV} we get
\begin{align*}
\sum_{n=1}^\infty \frac{4^n}{\binn}\frac{t_n(2)}{n^2} &\,=-4
\Big( t(2)\int_0^1(\tx_{-i}-\tx_{i})^2 + \int_0^1 \ty \ta(\tx_{-i}-\tx_{i})^2\Big)\approx 5.4641926215,\\
\sum_{n=1}^\infty \frac{4^n}{\binn}\frac{\ze_{n-1}(2)t_n(2)}{n^2} &\,=2^4
\Big( t(2)\int_0^1(\tx_{-i}-\tx_{i})^4 + \int_0^1 \ty a(\tx_{-i}-\tx_{i})^4\Big)\approx  4.822651414.
\end{align*}
\end{exa}

\begin{thm} \label{thm-2ndPower}
For any $p,k,m\in\N$ we have
\begin{align}
\label{Eq-mtss-1}
\sum_{n=p-1}^\infty  \bigg[\frac1{4^n}\binn\bigg]^2  \frac{t_n(2_{p-1})}{(2n+1)^{m}} &\, \in \frac{i}\pi \CMZV^{4}_{2p+m-1},\\
\sum_{n=p}^\infty \bigg[\frac{4^n}\binn\bigg]^2 \frac{\ze_{n-1}(2_{p-1})t_n^\star(1_k)}{n^{m+2}} &\,  \in \CMZV^{4}_{2p+k+m}.
\label{Eq-mtss-1-mhs-2-cb}
\end{align}
\end{thm}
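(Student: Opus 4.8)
The plan is to strip one central binomial factor off each squared coefficient so that the remaining single-binomial series is exactly one of the arcsin-type generating functions $F_{2p-1,m},F_{2p,m}$ of \eqref{Eq-Defn-Fpm-2}, and then to realize the leftover factor by an integral. For \eqref{Eq-mtss-1} I would use the Wallis integral $\frac1{4^n}\binn=\frac2\pi\int_0^1 x^{2n}(1-x^2)^{-1/2}\,dx$ (a standard evaluation of $\frac12B(n+\frac12,\frac12)$); interchanging sum and integral and invoking \eqref{Eq-Defn-Fpm-2} gives
\[
\sum_{n=p-1}^\infty\Big[\tfrac1{4^n}\binn\Big]^2\frac{t_n(2_{p-1})}{(2n+1)^m}
=\frac2\pi\int_0^1\frac{F_{2p-1,m}(x)}{x\sqrt{1-x^2}}\,dx .
\]
For \eqref{Eq-mtss-1-mhs-2-cb} I would keep one factor $\frac{4^n}{\binn}\frac{\ze_{n-1}(2_{p-1})}{n^{m+1}}$ inside $F_{2p,m}$ and realize the complementary factor through \eqref{Eq-n-k-Log-Sqrtx}, namely $\frac{4^n}{\binn}\frac{t_n^\star(1_k)}{n}=\frac{(-1)^k}{k!2^k}\int_0^1 y^{n-1}\frac{\log^k(1-y)}{\sqrt{1-y}}\,dy$; multiplying, summing, and using $\sum_n\frac{4^n\ze_{n-1}(2_{p-1})}{\binn n^{m+1}}y^n=2^{m-1}4^pF_{2p,m}(\sqrt y)$ yields
\[
\sum_{n=p}^\infty\Big[\tfrac{4^n}{\binn}\Big]^2\frac{\ze_{n-1}(2_{p-1})t_n^\star(1_k)}{n^{m+2}}
=\frac{(-1)^k2^{m-1}4^p}{k!2^k}\int_0^1\frac{\log^k(1-y)}{y\sqrt{1-y}}F_{2p,m}(\sqrt y)\,dy .
\]
In both steps Stirling's estimate $\frac1{4^n}\binn\sim(\pi n)^{-1/2}$ makes every series absolutely and uniformly convergent, so the interchange is legitimate.

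Next I would convert each one-variable integral into a single iterated integral over $(0,1)$. Writing $F_{2p-1,m}(x)=\int_0^x(\frac{dt}{\sqrt{1-t^2}})^{2p-1}(\frac{dt}{t})^{m-1}$ as in \eqref{defn-Fpm}, the first integral equals $\frac2\pi\int_0^1(\frac{dt}{\sqrt{1-t^2}})^{2p-1}(\frac{dt}{t})^{m-1}\frac{dt}{t\sqrt{1-t^2}}$, of weight $2p+m-1$. For the second I would set $y=u^2$ and encode the logarithm through $\log^k(1-u^2)=(-1)^k2^kk!\int_0^u(\om_3)^k$ with $\om_3=\frac{t\,dt}{1-t^2}$; since a product of iterated integrals is a shuffle, this gives
\[
\sum_{n=p}^\infty\Big[\tfrac{4^n}{\binn}\Big]^2\frac{\ze_{n-1}(2_{p-1})t_n^\star(1_k)}{n^{m+2}}
=2^m4^p\int_0^1\Big[\om_3^{\,k}\sha\big(\big(\tfrac{dt}{\sqrt{1-t^2}}\big)^{2p}\big(\tfrac{dt}{t}\big)^{m-1}\big)\Big]\frac{dt}{t\sqrt{1-t^2}},
\]
of weight $2p+k+m$, where $\om_3^{\,k}$ is the length-$k$ word $\om_3\cdots\om_3$.

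I would then apply the substitution $t\mapsto\frac{1-t^2}{1+t^2}$ exactly as in the proof of Thm.~\ref{thm-1stPower}, using \eqref{equ:changeVar1}--\eqref{equ:changeVar3} together with the single extra transformation $\frac{dt}{t\sqrt{1-t^2}}\mapsto-\frac{2\,dt}{1-t^2}=\tx_{-1}-\tx_1$ (from $\frac1t=\frac{1+t^2}{1-t^2}$ and $\frac{dt}{\sqrt{1-t^2}}\mapsto-\frac{2\,dt}{1+t^2}$). Every form becomes a $\Q$-combination of the level-$4$ letters $\ta,\tx_{\pm1},\tx_{\pm i}$; path reversal contributes $(-1)^{\text{weight}}$ and reverses the words (as in \eqref{formula-tmhss-mtv-apery-cmzv-1}--\eqref{formula-tmhss-mtv-apery-cmzv-2}), and the two results drop into $\CMZV^4$ of the stated weights. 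The decisive bookkeeping is the parity of the number of factors $\frac{dt}{\sqrt{1-t^2}}\mapsto i(\tx_{-i}-\tx_i)$: in \eqref{Eq-mtss-1} it is odd ($2p-1$), producing an overall $i$ that, combined with the Wallis $\frac2\pi$, places the sum in $\frac i\pi\CMZV^4_{2p+m-1}$; in \eqref{Eq-mtss-1-mhs-2-cb} it is even ($2p$), so $(i)^{2p}=(-1)^p$ is real and, with no $\pi$ present, the sum lies in $\CMZV^4_{2p+k+m}$.

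The step I expect to be the main obstacle is certifying membership in $\CMZV^4$ after expanding $\ty=\tx_{-i}+\tx_i-\tx_{-1}-\tx_1$ and $\tz=-\ta-\tx_{-i}-\tx_i$, since individual iterated integrals arising in that expansion may begin with $\ta$ or end with $\tx_1$ and so diverge as bare CMZVs. I would dispose of this with Lemma~\ref{lem:RegularizationLemma} at $N=4$ (using $\log2=-\Li_1(-1)\in\CMZV_1^4$), regularizing and extracting constant terms as in the proof of Thm.~\ref{thm-1stPower}. The saving feature is that $p\ge1$ forces at least one factor $\frac{dt}{\sqrt{1-t^2}}$ (or an $\om_3$, which vanishes at $0$) to occupy the innermost variable, so the combined integrand is genuinely integrable at $t=0$, while the outer form $\frac{dt}{t\sqrt{1-t^2}}\sim(1-t)^{-1/2}$ is integrable at $t=1$; hence the integral converges honestly, the regularization polynomial is constant, and its value equals the honest sum with no stray $T$-terms. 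Carefully matching the shuffle encoding of $\log^k(1-u^2)$, the new form transformation, and the two-endpoint admissibility is what makes the conclusion rigorous.
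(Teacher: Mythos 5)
Your proposal is correct and follows essentially the same route as the paper's own proof: the Wallis integral $\frac1{4^n}\binn=\frac2\pi\int_0^1 t^{2n}(1-t^2)^{-1/2}\,dt$ and the Beta-type representation \eqref{Eq-n-k-Log-Sqrtx} strip off one binomial factor, the generating functions $F_{2p-1,m}$ and $F_{2p,m}$ absorb the remaining single-binomial series, the log factor becomes a shuffle with $k$ copies of $\frac{2t\,dt}{1-t^2}$, and the substitution $t\mapsto\frac{1-t^2}{1+t^2}$ via \eqref{equ:changeVar1}--\eqref{equ:changeVar3} and \eqref{equ:changeVar4} lands both integrals in $\CMZV^4$ of the stated weights, with the parity of the $(\tx_{-i}-\tx_i)$-factors ($2p-1$ odd versus $2p$ even) accounting for the $\frac{i}{\pi}$ versus real distinction exactly as you say. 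Your closing regularization discussion is more caution than is needed but is not wrong: as you yourself conclude, after expansion no word begins with $\ta$ or ends with $\tx_1$, so both iterated integrals are honestly convergent sums of admissible level-4 words, which is why the paper's proof of this theorem never invokes Lemma \ref{lem:RegularizationLemma}.
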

\begin{proof} The proof is similar to that of Thm.~\ref{thm-1stPower}. First, it is well-known that
\begin{equation*}
\frac1{4^n}\binn=\frac2\pi\int_0^{\pi/2}\sin^{2n}\theta\, d\theta=\frac2\pi\int_0^1 \frac{t^{2n}}{\sqrt{1-t^2}}dt.
\end{equation*}
Multiplying this by
$\frac{\binn t_n(2_{p-1})}{4^n(2n+1)^m}$ and summing up, we see that
\begin{align}\label{Eq-iteratintegral-F2p-1-m-sqrt}
\sum_{n=1}^\infty \bigg[\frac1{4^n}\binn\bigg]^2 \frac{t_n(2_{p-1})}{(2n+1)^{m}}=
\frac2\pi \int_0^1 \frac{F_{2p-1,m}(t)dt}{t\sqrt{1-t^2}}.
\end{align}
Second, letting $t\to t^2$ in the integral on the right-hand side of \eqref{Eq-iteratintegral-mtss-cb}, we get
\begin{align}\label{Eq-iteratintegral-mtss-cb-2}
2^k\frac{4^n}{n\binn}t_n^\star(1_k)=2\int_0^1 \left(\frac{2tdt}{1-t^2}\right)^k \frac{t^{2n-1}dt}{\sqrt{1-t^2}}.
\end{align}
Multiplying \eqref{Eq-iteratintegral-mtss-cb-2} by $\frac{4^{n-p}\ze_{n-1}(2_{p-1})}{2^{m-1} \binn n^{m+1}}$ and summing up, we have
\begin{align}\label{Eq-iteratintegral-F2p-m-sqrt}
\sum_{n=1}^\infty \frac{\ze_{n-1}(2_{p-1})t_n^\star(1_k)}{n^{m+2}{\binn}^2}4^{2n}=
2^{2p+m-k}\int_0^1 \left(\frac{2tdt}{1-t^2}\right)^k \frac{F_{2p,m}(t)dt}{t\sqrt{1-t^2}}.
\end{align}
Applying $t\to \frac{1-t^2}{1+t^2}$, using \eqref{equ:changeVar1}--\eqref{equ:changeVar3} and
\begin{equation}\label{equ:changeVar4}
\frac{dt}{t\sqrt{1-t^2}} \to -\frac{2dt}{1-t^2}=\tx_{-1}-\tx_{1}
\end{equation}
we get
\begin{align*}
\text{RHS of  \eqref{Eq-iteratintegral-F2p-1-m-sqrt}} &\,
=  \frac{2i  (-1)^{p+m}}{\pi} \int_0^1(\tx_{-1}-\tx_{1}) \ty^{m-1} (\tx_{-i}-\tx_{i})^{2p-1},\\
\text{RHS of  \eqref{Eq-iteratintegral-F2p-m-sqrt}}
 &\,
 =(-1)^{p+m+k} 2^{2p+m}\int_0^1 (\tx_{-1}-\tx_{1})\Big(\ty^{m-1} (\tx_{-i}-\tx_{i})^{2p}\sha  (\tx_0-\tx_{i}-\tx_{-i})^k\Big),
\end{align*}
both of which lie in $\CMZV^4$. This concludes the proof of the theorem.
\end{proof}

\begin{exa}
Setting $p=1,k=1,m=1$, we get
\begin{align*}
\sum_{n=1}^\infty &\,\bigg[\frac{4^n}\binn\bigg]^2 \frac{t_n(1)}{n^3}
=-8\int_0^1 (\tx_{-1}-\tx_{1}) \Big( (\tx_{-i}-\tx_{i})^2\sha (\tx_0-\tx_{i}-\tx_{-i})\Big)\\
=&\, 8\sum_{\eta_1,\eta_2=\pm i,\eta_3=\pm1}\frac{-\eta_1}{\eta_2\eta_3}
\left(\Li_{2,1,1}\Big(\frac1{\eta_1},\frac{\eta_1}{\eta_2},\frac{\eta_2}{\eta_3}\Big)
+\Li_{1,2,1}\Big(\frac1{\eta_1},\frac{\eta_1}{\eta_2},\frac{\eta_2}{\eta_3}\Big)
+\Li_{1,1,2}\Big(\frac1{\eta_1},\frac{\eta_1}{\eta_2},\frac{\eta_2}{\eta_3}\Big)\right)\\
-&\, 8\sum_{\eta_1,\eta_2,\eta_3=\pm i,\eta_4=\pm1} \eta_4\left(\frac{\eta_2}{\eta_3}+\frac{\eta_1}{\eta_3}+\frac{\eta_1}{\eta_2}\right)
 \Li_{1,1,1,1}\Big(\frac1{\eta_1},\frac{\eta_1}{\eta_2},\frac{\eta_2}{\eta_3},\frac{\eta_3}{\eta_4}\Big)
\approx 7.7112698415,\\
\sum_{n=0}^\infty &\, \bigg[\frac1{4^n}\binn\bigg]^2  \frac{1}{2n+1}  =
 \frac{2 i}{\pi} \int_0^1(\tx_{-1}-\tx_{1}) (\tx_{-i}-\tx_{i}) \\
&\, \hskip3cm =\frac{2i}{\pi}\big(\Li_{1,1}(i,i)+\Li_{1,1}(-i,i)-\Li_{1,1}(i,-i)-\Li_{1,1}(-i,-i)\big)=\frac{4G}{\pi}
\end{align*}
which is consistent with \eqref{4GI}. Now taking $p=2$ or 3, $k=1$ and $m=1$, we have
\begin{align*}
\sum_{n=2}^\infty \bigg[\frac{4^n}\binn\bigg]^2 \frac{\ze_{n-1}(2)t_n(1)}{n^3}
=&\,2^5\int_0^1 (\tx_{-1}-\tx_{1}) \Big(  (\tx_{-i}-\tx_{i})^4\sha(\tx_0-\tx_{i}-\tx_{-i})\Big)\approx 4.8416943704,\\
\sum_{n=2}^\infty \bigg[\frac{4^n}\binn\bigg]^2 \frac{\ze_{n-1}(2,2)t_n(1)}{n^3}
=&\,-2^7\int_0^1 (\tx_{-1}-\tx_{1}) \Big(  (\tx_{-i}-\tx_{i})^6\sha(\tx_0-\tx_{i}-\tx_{-i})\Big)\approx 1.3105783945,\\
\sum_{n=1}^\infty  \bigg[\frac1{4^n}\binn\bigg]^2  \frac{t_n(2)}{2n+1} =&\,
 \frac{-2 i}{\pi} \int_0^1(\tx_{-1}-\tx_{1}) (\tx_{-i}-\tx_{i})^3  \approx 0.179386942,\\
\sum_{n=1}^\infty  \bigg[\frac1{4^n}\binn\bigg]^2  \frac{t_n(2,2)}{2n+1} =&\,
 \frac{2 i}{\pi} \int_0^1(\tx_{-1}-\tx_{1}) (\tx_{-i}-\tx_{i})^5  \approx  0.0139754925.
\end{align*}
If we let  $p=1$ or 2, $k=1$ and $m=2$, then we have
\begin{align}
\sum_{n=1}^\infty \bigg[\frac{4^n}\binn\bigg]^2 \frac{t_n(1)}{n^4}
=&\,2^4\int_0^1 (\tx_{-1}-\tx_{1}) \Big( \ty(\tx_{-i}-\tx_{i})^2\sha (\tx_0-\tx_{i}-\tx_{-i})\Big)\approx 5.0319188594,\notag\\
\sum_{n=1}^\infty \bigg[\frac{4^n}\binn\bigg]^2 \frac{\ze_{n-1}(2)t_n(1)}{n^4}
=&\,-2^6\int_0^1 (\tx_{-1}-\tx_{1}) \Big(\ty(\tx_{-i}-\tx_{i})^4\sha (\tx_0-\tx_{i}-\tx_{-i})\Big)\approx 1.1896632248,\notag\\
\sum_{n=0}^\infty  \bigg[\frac1{4^n}\binn\bigg]^2  \frac{1}{(2n+1)^2} =&\,
-\frac{2 i}{\pi} \int_0^1(\tx_{-1}-\tx_{1})\ty(\tx_{-i}-\tx_{i})\approx  1.037947765, \label{4GIm=2Num}\\
\sum_{n=0}^\infty  \bigg[\frac1{4^n}\binn\bigg]^2  \frac{t_n(2)}{(2n+1)^2} =&\,
\frac{2 i}{\pi} \int_0^1(\tx_{-1}-\tx_{1})\ty(\tx_{-i}-\tx_{i})^3 \approx  0.0393547288. \notag
\end{align}
We see that \eqref{4GIm=2Num} is consistent with \eqref{4GIm=2}.
\end{exa}

In fact, we can slightly generalize \eqref{Eq-mtss-1} to involve a factor of multiple harmonic star sum, which
requires the following lemma.

\begin{lem}\label{lem:star1s}
For all $m,n\in\N$, we have
\begin{equation*}
\sum\limits_{i=1}^{m}\ze_{n}^{\star}(1_{m-i}) \ze_{n}(i) =m  \ze_{n}^{\star}(1_{m}).
\end{equation*}
\end{lem}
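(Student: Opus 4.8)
The plan is to recognize the statement as the classical Newton--Girard identity for symmetric functions, specialized to the alphabet $x_k=1/k$ ($k=1,\dots,n$). Under this substitution the star sum becomes the complete homogeneous symmetric polynomial, $\ze_n^\star(1_m)=h_m(1,1/2,\dots,1/n)$, since the ordering $n\ge n_1\ge\cdots\ge n_m\ge 1$ runs over exactly the degree-$m$ monomials; meanwhile the depth-one sum $\ze_n(i)=\sum_{k=1}^n x_k^i$ is the power sum $p_i$. The assertion then reads $m\,h_m=\sum_{i=1}^m p_i\,h_{m-i}$, which is Newton's identity expressing the $h$'s through the $p$'s.

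The quickest route is simply to invoke Lemma~\ref{lem-S-1} with $x_k=1/k$. There its sequence $B_m(n)$ becomes $\ze_n^\star(1_m)$ and $\sum_{k=1}^n x_k^{m-i}$ becomes $\ze_n(m-i)$, so by the lemma's conclusion $A_m(n)=m!\,B_m(n)=m!\,\ze_n^\star(1_m)$ and $A_i(n)/i!=B_i(n)=\ze_n^\star(1_i)$. The defining recurrence for $A_m(n)$ therefore says
\begin{equation*}
m!\,\ze_n^\star(1_m)=(m-1)!\sum_{i=0}^{m-1}\ze_n^\star(1_i)\,\ze_n(m-i).
\end{equation*}
Dividing by $(m-1)!$ and reindexing via $j=m-i$ gives $m\,\ze_n^\star(1_m)=\sum_{j=1}^m \ze_n^\star(1_{m-j})\,\ze_n(j)$, which is exactly the claim.

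For a self-contained argument I would instead give a marking proof. Fix a degree-$m$ monomial $x_{c_1}^{e_1}\cdots x_{c_s}^{e_s}$ (with $\sum_l e_l=m$ and the $c_l$ distinct) contributing to $h_m=\ze_n^\star(1_m)$. On the right-hand side $\sum_{i\ge 1}\ze_n(i)\,\ze_n^\star(1_{m-i})$, this monomial arises precisely when we pull $i$ copies of a single variable $c_l$, with $1\le i\le e_l$, into the power-sum factor $\ze_n(i)=p_i$ and leave the remaining degree-$(m-i)$ monomial in $h_{m-i}=\ze_n^\star(1_{m-i})$. Hence its total multiplicity is $\sum_{l=1}^s\sum_{i=1}^{e_l}1=\sum_{l=1}^s e_l=m$, so the right-hand side equals $m\,\ze_n^\star(1_m)$. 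Equivalently, one differentiates the generating function $H(t)=\prod_{k=1}^n(1-x_k t)^{-1}$ using $t\,H'(t)=P(t)\,H(t)$ with $P(t)=\sum_{i\ge 1}p_i t^i$, and compares coefficients of $t^m$.

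There is no genuine obstacle here, the identity being classical; the only care needed is bookkeeping of conventions. One must confirm that the weakly decreasing order defining $\ze_n^\star(1_m)$ matches the unordered-monomial description of $h_m$, and that the boundary term ($i=m$, i.e.\ $h_0=1$) is handled so that the left-hand sum over $1\le i\le m$ agrees with the range $0\le i\le m-1$ produced by the recurrence.
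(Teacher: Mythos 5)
Your proposal is correct and takes essentially the same approach as the paper: the paper's own proof likewise invokes Lemma~\ref{lem-S-1} with $x_k=1/k$, and your Newton--Girard unwinding of that lemma's recurrence (identifying $B_m(n)=\ze_n^\star(1_m)$, $\sum_k x_k^i=\ze_n(i)$, then reindexing) simply makes explicit what the paper calls an ``immediate consequence.'' Your self-contained backup (the multiplicity-counting/generating-function argument) differs from the paper's stated alternative (an induction using the stuffle product on words representing $\ze_n^\star$), but both are secondary to the same main proof.
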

\begin{proof} This is an immediate consequence of Lemma \ref{lem-S-1} if we take $x_k=1/k$ for all $k$. Alternatively,
we can prove it directly by induction. The lemma is clearly true if $m=1$. Suppose $m\ge 2$.
Fix $n$ and let $\star$ denote the stuffle product of the word algebra representing the
multiple harmonic star sums $\ze_n(\cdots)$ (see \cite{HoffmanIh2012}). For examples, $\ze_n^\star(1,4,1,3)$
is represented by the word $ba^3b^2a^2b$ and $\ze_{n}^{\star}(1_{m})$ by $b^m$. Then we have
\begin{align*}
 \sum_{i=1}^m b^{m-i} \star a^{i-1}b &\, =a^{m-1}b+\sum_{i=1}^{m-1}
 \big(a^{i-1}b^{m-i+1}+b(a^{i-1}b \star b^{m-i-1})-a^i b^{m-i}\big)\\
&\,=b^m+\sum_{i=2}^m a^{i-1}b^{m-i+1}+(m-1) b^m -\sum_{i=1}^{m-1} a^i b^{m-i}
\end{align*}
by induction. The lemma follows immediately by a simple index shifting.
\end{proof}

\begin{pro} \label{pro:BetaDer}
Let $b_0=1$ and for all $j\ge 1$ define recursively
$$b_j=-\frac1j \sum_{l=1}^j 2^l b_{j-l}\ze(\bar l).$$
Then $b_j$ is in the weight $j$ piece of $\Q[\log 2, \ze(2),\ze(3),\ze(4),\ldots]$ (assuming $\log 2$ has weight one) for all $j\ge 1$.
Moreover, we have
\begin{equation}
\frac{\partial^{k}B(a,b)}{\partial b^{k}}\bigg|_{a=n+1/2,b=1/2}
=\frac{(-1)^k k! \pi\binn}{4^{n}}\sum\limits_{j=0}^{k} b_j \ze_{n}^{\star}(1_{k-j}). \label{equ:BetaPartialb}
\end{equation}
\end{pro}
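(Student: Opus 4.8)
The plan is to prove \eqref{equ:BetaPartialb} by induction on $k$, using the Leibniz-type recursion \eqref{Eq-Beta-Relat} applied with the two Beta-arguments interchanged. This is legitimate because $B(a,b)=B(b,a)$ and $\partial_b B(a,b)=B(a,b)[\psi(b)-\psi(a+b)]$ has exactly the shape of the formula preceding \eqref{Eq-Beta-Relat}; iterating by Leibniz gives
\[
\frac{\partial^k B(a,b)}{\partial b^k}=\sum_{i=0}^{k-1}\binom{k-1}{i}\frac{\partial^i B(a,b)}{\partial b^i}\bigl[\psi^{(k-i-1)}(b)-\psi^{(k-i-1)}(a+b)\bigr].
\]
Writing $c_k(n):=\partial_b^k B(a,b)\big|_{a=n+1/2,\,b=1/2}$ and $\beta_n:=\pi\binn/4^n$, the base case $k=0$ is the classical evaluation $B(n+\tfrac12,\tfrac12)=\Gamma(n+\tfrac12)\Gamma(\tfrac12)/\Gamma(n+1)=\beta_n$, which matches the right side since $b_0=1$ and $\ze_n^\star(1_0)=1$.

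First I would record the polygamma ingredient. Using $\psi^{(m)}(z)=(-1)^{m+1}m!\sum_{k\ge0}(z+k)^{-(m+1)}$, the evaluation $\sum_{k\ge0}(2k+1)^{-(m+1)}=(1-2^{-(m+1)})\ze(m+1)$, and the shift relation for $\psi^{(m)}$, one obtains for all $m\ge0$
\[
\psi^{(m)}(\tfrac12)-\psi^{(m)}(n+1)=(-1)^m m!\bigl[2^{m+1}\ze(\overline{m+1})-\ze_n(m+1)\bigr],
\]
where the alternating term arises from $\psi^{(m)}(\tfrac12)-\psi^{(m)}(1)$ after rewriting $(2^{m+1}-2)\ze(m+1)=-2^{m+1}\ze(\overline{m+1})$, the case $m=0$ reducing to $\psi(\tfrac12)-\psi(1)=-2\log2=2\ze(\bar1)$. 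Substituting this and the inductive form $c_i(n)=(-1)^i i!\,\beta_n\,S_i$, with $S_i:=\sum_{l=0}^i b_l\,\ze_n^\star(1_{i-l})$, into the recursion and using $\binom{k-1}{i}i!(k-i-1)!=(k-1)!$, the signs combine to $(-1)^{k-1}$ and the whole expression collapses to
\[
c_k(n)=(-1)^{k-1}(k-1)!\,\beta_n\sum_{i=0}^{k-1}S_i\bigl[2^{k-i}\ze(\overline{k-i})-\ze_n(k-i)\bigr].
\]
Thus the claim reduces to the purely combinatorial identity $\sum_{i=0}^{k-1}S_i\bigl[2^{k-i}\ze(\overline{k-i})-\ze_n(k-i)\bigr]=-k\,S_k$.

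To prove this I would treat the two sums separately after re-indexing by $j=k-i$. Collecting the $\ze(\overline{\cdot})$-sum by the depth $q$ of the surviving factor $\ze_n^\star(1_q)$ gives $\sum_{q=0}^{k-1}\ze_n^\star(1_q)\sum_{j+l=k-q,\,j\ge1}2^j\ze(\bar j)b_l$, whose inner sum is exactly $-(k-q)b_{k-q}$ by the defining recursion $p\,b_p=-\sum_{j=1}^p2^j\ze(\bar j)b_{p-j}$; hence this sum equals $-\sum_{l=1}^k l\,b_l\,\ze_n^\star(1_{k-l})$. Collecting the $\ze_n$-sum by the index $l$ of $b_l$ produces the inner sum $\sum_{i=1}^{k-l}\ze_n(i)\,\ze_n^\star(1_{k-l-i})$, which Lemma \ref{lem:star1s} evaluates to $(k-l)\ze_n^\star(1_{k-l})$, so this sum equals $\sum_{l=0}^{k-1}(k-l)b_l\,\ze_n^\star(1_{k-l})$. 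Subtracting and comparing the coefficient of each $b_l\,\ze_n^\star(1_{k-l})$ (the endpoints $l=0$ and $l=k$ each receiving a contribution from only one of the two sums) yields precisely $-k\sum_{l=0}^k b_l\,\ze_n^\star(1_{k-l})=-k\,S_k$, closing the induction. The assertion that $b_j$ lies in the weight-$j$ piece of $\Q[\log2,\ze(2),\ze(3),\dots]$ then follows from the same recursion by a separate induction, since $\ze(\bar1)=-\log2$ and $\ze(\bar l)=-(1-2^{1-l})\ze(l)$ for $l\ge2$ each have weight $l$.

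I expect the combinatorial identity to be the main obstacle: it requires simultaneously recognizing the $b$-recursion hidden in one sum and invoking Lemma \ref{lem:star1s} in the other, then matching boundary terms. The careful bookkeeping of signs and of the index shift between the $\ze_n^\star(1_q)$ factors is the delicate point, whereas the polygamma evaluation and the factorial simplification are routine.
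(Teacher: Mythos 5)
Your proposal is correct and follows essentially the same route as the paper's own proof: induction on $k$ via the Leibniz recursion for $\partial_b^k B(a,b)$, the polygamma evaluation $\psi^{(m)}(\tfrac12)-\psi^{(m)}(n+1)=(-1)^m m!\,[2^{m+1}\ze(\overline{m+1})-\ze_n(m+1)]$, Lemma \ref{lem:star1s} to collapse the $\ze_n$-part, and the defining recursion of the $b_j$ to collapse the $\ze(\overline{\,\cdot\,})$-part. Your packaging of the inductive step as the single identity $\sum_{i=0}^{k-1}S_i\bigl[2^{k-i}\ze(\overline{k-i})-\ze_n(k-i)\bigr]=-k\,S_k$, verified coefficient-by-coefficient including the endpoint terms $l=0$ and $l=k$, is just a cleaner organization of exactly the double-sum manipulation the paper performs.
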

\begin{proof}
By induction it is easy to see that $b_j\in \Q[\log 2, \ze(2),\ze(3),\ze(4),\ldots]$ and has weight $j$.

We now prove \eqref{equ:BetaPartialb} by induction on $k$. The case $k=0$ is clear.
Now suppose \eqref{equ:BetaPartialb} is true for
all partial derivatives of order up to $k-1$. Observe that
\begin{align}
\frac{\partial^{k}B(a,b)}{\partial b^{k}}=\sum\limits_{i=0}^{k-1}\binom{k-1}{i}
\frac{\partial^{i}B(a,b)}{\partial b^{i}}\cdot\left[\psi^{(k-i-1)}(b)-\psi^{(k-i-1)}(a+b)\right]
\end{align}
and
\begin{align*}
\psi^{(k)}(1/2)-\psi^{(k)}(n+1)
\,&=(-1)^{k+1}k!\sum\limits_{m=0}^{\infty}
\left\{\frac{1}{(1/2+m)^{k+1}}-\frac{1}{(n+1+m)^{k+1}}\right\} \\
&=(-1)^{k+1}k!\left\{2^{k+1}t(k+1)-\ze(k+1)+\ze_{n}(k+1)\right\} \\
&=(-1)^{k+1}k!\left\{-2^{k+1}\ze(\overline{k+1})+\ze_{n}(k+1)\right\}.
\end{align*}
Hence, setting $a=n+1/2,b=1/2$ we get
\begin{align*}
\frac{\partial^{k}B(a,b)}{\partial b^{k}}\bigg|_{a=n+1/2,\atop b=1/2}
%=\sum\limits_{i=0}^{k-1}\frac{(k-1)!(-1)^{k-i}}{(k-1-i)!i!}(k-i-1)!\frac{\partial^{i}B(a,b)}{\partial %b^{i}}\bigg|_{a=n+1/2,\atop b=1/2}
%\left\{\ze_{n}(k-i)-2^{k-i}\ze(\overline{k-i})\right\}\nonumber\\
&=(-1)^{k}(k-1)!\sum\limits_{i=0}^{k-1}\frac{(-1)^{i}}{i!}\cdot\frac{\partial^{i}B(a,b)}{\partial b^{i}}\bigg|_{a=n+1/2,\atop b=1/2}
\left\{\ze_{n}(k-i)-2^{k-i}\ze(\overline{k-i})\right\}.    \label{equ:betaRecur}
\end{align*}
This yields that
\begin{align*}
\frac{(-1)^k 4^{n}}{\pi\binn(k-1)!}\frac{\partial^{k}B(a,b)}{\partial b^{k}}\bigg|_{a=n+1/2,\atop b=1/2}
&= \sum\limits_{i=0}^{k-1}
\sum\limits_{j=0}^{i}  b_j \ze_{n}^{\star}(1_{i-j})
\left\{\ze_{n}(k-i)-2^{k-i}\ze(\overline{k-i})\right\}\\
&=\sum\limits_{j=0}^{k-1}  \sum\limits_{i=1}^{k-j} b_j \ze_{n}^{\star}(1_{k-i-j}) \ze_{n}(i)
-\sum\limits_{i=0}^{k-1} \sum\limits_{j=0}^{i}  b_{i-j} \ze_{n}^{\star}(1_{j})  2^{k-i}\ze(\overline{k-i})\\
&=\sum\limits_{j=0}^{k-1}  b_j  \sum\limits_{i=1}^{k-j}\ze_{n}^{\star}(1_{k-i-j}) \ze_{n}(i)
-\sum\limits_{j=1}^{k} \sum\limits_{l=1}^{j}  b_{j-l} \ze_{n}^{\star}(1_{k-j})2^{l}\ze(\overline{l})\\
&=\sum\limits_{j=0}^{k-1}  b_j  (k-j) \ze_{n}^{\star}(1_{k-j})+\sum\limits_{j=1}^{k} j b_{j} \ze_{n}^{\star}(1_{k-j})
\end{align*}
by Lemma \ref{lem:star1s} and inductive assumption, respectively. The proposition follows readily.
\end{proof}

\begin{thm} For any $p,m\in\N$ and $k\in\N_0$,
\begin{equation}\label{formua-binom-t-mhs}
\sum\limits_{n=p-1}^{\infty}\bigg[\frac\binn{4^n}\bigg]^2
\frac{t_{n}(2_{p-1})\ze_{n}^{\star}(1_{k})}{(2n+1)^{m}}\in \frac{i}{\pi} \CMZV^{4}_{2p+k+m-1}.
\end{equation}
\end{thm}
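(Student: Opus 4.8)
The plan is to adapt the argument of Thm.~\ref{thm-2ndPower}, whose case $k=0$ is exactly \eqref{Eq-mtss-1}, by first converting the extra factor $\ze_n^\star(1_k)$ into an integral against $t^{2n}/\sqrt{1-t^2}$. Recall that $\frac{\partial^k B(a,b)}{\partial b^k}\big|_{a=n+1/2,\,b=1/2}=\int_0^1 t^{n-1/2}(1-t)^{-1/2}\log^k(1-t)\,dt=2\int_0^1 \frac{t^{2n}}{\sqrt{1-t^2}}\log^k(1-t^2)\,dt$ after the substitution $t\mapsto t^2$. Writing $\beta(y)=\sum_{j\ge 0}b_j y^j$ for the generating series of the constants $b_j$ of Prop.~\ref{pro:BetaDer} (so $b_0=1$) and $\sum_{j\ge0}\tilde b_j y^j=\beta(y)^{-1}$ for its formal inverse, each $\tilde b_j$ again lies in the weight-$j$ piece of $\Q[\log 2,\ze(2),\ze(3),\dots]$. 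Since \eqref{equ:BetaPartialb} is triangular with unit diagonal, inverting it gives the integral representation
\begin{equation*}
\frac1{4^n}\binn \ze_n^\star(1_k)=\frac2\pi\int_0^1 \frac{t^{2n}}{\sqrt{1-t^2}}\,h_k(t)\,dt,\qquad h_k(t):=\sum_{l=0}^k \frac{(-1)^l\tilde b_{k-l}}{l!}\log^l(1-t^2),
\end{equation*}
where the coefficient of $\log^l(1-t^2)$ in $h_k$ has weight $k-l$; a short computation using $\sum_{u}\tilde b_{k-i-u}b_u=\delta_{i,k}$ confirms that the right-hand side collapses to $\frac1{4^n}\binn\ze_n^\star(1_k)$.

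Next I would multiply this identity by $\frac{\binn t_n(2_{p-1})}{4^n(2n+1)^m}$ and sum over $n\ge p-1$. Interchanging sum and integral (justified by absolute convergence exactly as in Lemma~\ref{lem:Limit}, using $\frac1{4^n}\binn\sim (\pi n)^{-1/2}$) and recognising the generating function $F_{2p-1,m}$ of \eqref{defn-Fpm}--\eqref{Eq-Defn-Fpm-2}, this yields
\begin{equation*}
\sum_{n=p-1}^\infty \bigg[\frac1{4^n}\binn\bigg]^2 \frac{t_n(2_{p-1})\ze_n^\star(1_k)}{(2n+1)^m}=\frac2\pi\int_0^1 \frac{h_k(t)\,F_{2p-1,m}(t)}{t\sqrt{1-t^2}}\,dt,
\end{equation*}
which for $k=0$ (so $h_0\equiv1$) reduces to \eqref{Eq-iteratintegral-F2p-1-m-sqrt}. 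Now $F_{2p-1,m}(t)$ and $\log^l(1-t^2)=(-1)^l l!\int_0^t (2u\,du/(1-u^2))^l$ are both iterated integrals from $0$ to $t$, so the shuffle product expresses $h_k(t)F_{2p-1,m}(t)$ as a $\Q[\log2,\ze(2),\dots]$-combination of iterated integrals from $0$ to $t$; integrating against $dt/(t\sqrt{1-t^2})$ then turns the whole expression into such a combination of iterated integrals from $0$ to $1$.

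Finally I would apply the substitution $t\mapsto \frac{1-t^2}{1+t^2}$ together with \eqref{equ:changeVar1}--\eqref{equ:changeVar3} and \eqref{equ:changeVar4}, so that $dt/\sqrt{1-t^2}\mapsto i(\tx_{-i}-\tx_i)$, $dt/t\mapsto\ty$, $2u\,du/(1-u^2)\mapsto 2\tz$, and $dt/(t\sqrt{1-t^2})\mapsto\tx_{-1}-\tx_1$; every resulting $1$-form is a level-$4$ form. Since $F_{2p-1,m}$ contributes exactly $2p-1$ copies of $i(\tx_{-i}-\tx_i)$, an odd number, the overall constant is a power of $i$ times a real combination, which produces the factor $i$ in the statement, while the $2/\pi$ prefactor supplies the $1/\pi$. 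The weight of each surviving iterated integral is $1+(2p-1)+(m-1)+l=2p+m+l-1$, and together with the weight-$(k-l)$ coefficient from $h_k$ this gives total weight $2p+k+m-1$, as claimed. The one genuinely delicate point is that the $\tz$-forms (hence the factor $\ta=dt/t$) arising from $h_k$ make several of the individual iterated integrals divergent at the endpoint corresponding to $t=1$; I would handle this exactly as in the proofs of Thm.~\ref{thm-1stPower} and Thm.~\ref{thm-mzv-mzsv-cb-mmvs}, by inserting the cutoff $\tau(\eps)=\sqrt{\eps/(2-\eps)}$ (the image of $1-\eps$ under the substitution) and invoking Lemma~\ref{lem:RegularizationLemma} with $N=4$ to write each regularized integral as a polynomial in $\log\eps$ with $\CMZV^4$ coefficients of the correct weight. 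Because the original integral converges, the $\log\eps$ contributions must cancel in the full combination, leaving a constant term in $\frac i\pi\CMZV^4_{2p+k+m-1}$; controlling these error terms and the limit $\eps\to0^+$ via Lemma~\ref{lem:Limit} is the main obstacle, but it is entirely analogous to the regularizations already carried out in the preceding theorems.
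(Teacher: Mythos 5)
Your proof is correct and takes essentially the same route as the paper's: both rest on Prop.~\ref{pro:BetaDer}, on recognizing the generating function $F_{2p-1,m}$ of \eqref{defn-Fpm} after multiplying by $\binn t_n(2_{p-1})/(4^n(2n+1)^m)$ and summing, and on the substitution $t\mapsto\frac{1-t^2}{1+t^2}$ which produces the level-4 forms, the factor $i$ from the odd power of $i(\tx_{-i}-\tx_{i})$, and the weight count $2p+k+m-1$. The only genuine difference is organizational---you invert the triangular system \eqref{equ:BetaPartialb} explicitly via the inverse of the series $\sum_j b_j y^j$ before integrating, whereas the paper integrates first and then unwinds the same triangular system by induction on $k$---and the ``delicate'' regularization you flag at the end is actually unnecessary, since after shuffling every word arising here is a convergent iterated integral (the innermost form is $\tx_{-1}-\tx_{1}$ and the outermost form is never $\tx_{1}$).
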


\begin{proof}
Note that for any integers $n,k\geq 0$,
\begin{align*}
\int\limits_0^1{\left(\frac{tdt}{1-t^{2}}\right)^{k}\frac{t^{2n}}{\sqrt{1-t^{2}}}dt}
&=\frac{(-1)^{k}}{2^{k}k! }\int\limits_0^1{t^{2n}\frac{\log^{k}(1-t^{2})}{\sqrt{1-t^{2}}}dt}\nonumber\\
&=\frac{(-1)^{k}}{2^{k+1}k! }\int\limits_0^1 {x^{n-1/2}\frac{\log^{k}(1-x)}{\sqrt{1-x}}dx}\\
&=\frac{(-1)^{k}}{2^{k+1}k! }\frac{\partial^{k}B(a,b)}{\partial b^{k}}\bigg|_{a=n+1/2,\atop b=1/2}
=\frac{\pi}{2^{k+1}}\frac{\binn}{4^{n}}\sum\limits_{j=0}^{k}b_j\ze_{n}^{\star}(1_{k-j})
\end{align*}
by Prop.~\ref{pro:BetaDer}. Multiplying this by
$\frac{\binn t_n(2_{p-1})}{4^n(2n+1)^m}$ and summing up, we see that
\begin{align}
&\sum\limits_{j=0}^{k}  b_j \sum\limits_{n=p-1}^{\infty}\bigg[\frac\binn{4^n}\bigg]^2
\frac{t_{n}(2_{p-1})\ze_{n}^{\star}(1_{k-j})}{(2n+1)^{m}}\nonumber\\
&=\frac{2^{k+1}}{\pi}\int\limits_0^1{\left(\frac{tdt}{1-t^{2}}\right)^{k}\sum\limits_{n=p-1}^{\infty}
\frac{\binn t_{n}(2_{p-1})}{4^{n}(2n+1)^{m}}\frac{t^{2n}}{\sqrt{1-t^{2}}}dt}\nonumber\\
&=\frac{2^{k+1}}{\pi}\int\limits_0^1{\left(\frac{tdt}{1-t^{2}}\right)^{k}\frac{F_{2p-1,m}(t)}{t\sqrt{1-t^{2}}}dt}\in \frac{i}{\pi} \CMZV^{4}_{2p+k+m-1}.
\end{align}
The theorem follows easily from an induction on $k$.
\end{proof}

\begin{thm} \label{thm-MtVsm222}
For any $p,m\in\N_0$ we have
\begin{align*}
t(m+2,2_p)=\int_0^1 \left(\frac{dt}{\sqrt{1-t^2}}\right)^{2p+1} \left(\frac{dt}{t}\right)^{m}\frac{dt}{\sqrt{1-t^2}}.
\end{align*}
\end{thm}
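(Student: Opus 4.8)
The plan is to collapse the iterated integral into a single ordinary integral against $\frac{dx}{\sqrt{1-x^2}}$ and then to recognize the resulting series as the multiple $t$-value $t(m+2,2_p)$. First I would peel off the rightmost one-form. By the iterated-integral convention recalled at the beginning of this section, the rightmost factor $\frac{dt}{\sqrt{1-t^2}}$ carries the largest integration variable, while the block of forms to its left is exactly the integrand defining $F_{2p+1,m+1}$ in \eqref{defn-Fpm}. Hence
\[
\int_0^1 \left(\frac{dt}{\sqrt{1-t^2}}\right)^{2p+1} \left(\frac{dt}{t}\right)^{m}\frac{dt}{\sqrt{1-t^2}}=\int_0^1 F_{2p+1,m+1}(x)\,\frac{dx}{\sqrt{1-x^2}}.
\]

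Next I would insert the power series for $F_{2p+1,m+1}$. Replacing $p$ by $p+1$ and $m$ by $m+1$ in \eqref{Eq-Defn-Fpm-2} gives
\[
F_{2p+1,m+1}(x)=\sum_{n=p}^\infty \frac{\binn\, t_n(2_p)}{4^n}\frac{x^{2n+1}}{(2n+1)^{m+1}}.
\]
All terms are nonnegative and $\frac1{\sqrt{1-x^2}}$ is integrable on $[0,1]$, so term-by-term integration is justified (Tonelli). The only integral required is the odd moment, which the substitution $u=x^2$ together with \eqref{BetaHalf} evaluates as $\int_0^1 \frac{x^{2n+1}}{\sqrt{1-x^2}}\,dx=\frac12 B\big(n+1,\tfrac12\big)=\frac{4^n}{(2n+1)\binn}$. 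The factors $\binn/4^n$ then cancel, leaving
\[
\int_0^1 F_{2p+1,m+1}(x)\,\frac{dx}{\sqrt{1-x^2}}=\sum_{n=p}^\infty \frac{t_n(2_p)}{(2n+1)^{m+2}}.
\]

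Finally I would identify this series with $t(m+2,2_p)$. Expanding $t_n(2_p)=\sum_{n\ge n_1>\cdots>n_p\ge 1}\prod_{j=1}^p (2n_j-1)^{-2}$ and setting $N=n+1$, so that $2n+1=2N-1$ and the constraint $n\ge n_1$ becomes $N>n_1$, the double sum rearranges into
\[
\sum_{N>n_1>\cdots>n_p\ge 1}\frac1{(2N-1)^{m+2}\prod_{j=1}^p (2n_j-1)^2}=t(m+2,2_p),
\]
where the terms with $n<p$ (equivalently $N\le p$) vanish automatically and cause no difficulty.

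There is no deep obstacle here; the argument merely assembles facts already proved in the paper. The step demanding the most care is the very first one: correctly matching the iterated-integral convention so that the inner block is precisely $F_{2p+1,m+1}$, and applying the index shift $(p,m)\mapsto(p+1,m+1)$ in \eqref{Eq-Defn-Fpm-2}, including the degenerate cases $p=0$ (where $2_p=\emptyset$ and $t_n(\emptyset)=1$, so that the integral reduces to the classical $\arcsin$ series) and $m=0$.
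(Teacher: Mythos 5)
Your proposal is correct and follows essentially the same route as the paper: the paper also peels off the outer form to write the integral as $\int_0^1 F_{2p+1,m+1}(t)\,\frac{dt}{\sqrt{1-t^2}}$ and then integrates the series \eqref{Eq-Defn-Fpm-2} term by term against the Beta-function moment (the paper invokes \eqref{Eq-iteratintegral-mtss-cb-2} with $n\to n+1$ and sets $k=0$, which is exactly your evaluation $\int_0^1 \frac{x^{2n+1}}{\sqrt{1-x^2}}\,dx=\frac{4^n}{(2n+1)\binn}$). Your explicit reindexing $n_0=n+1$ to identify the resulting series with $t(m+2,2_p)$, and your remarks on the degenerate cases, only make explicit what the paper leaves implicit.
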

\begin{proof} Replacing $n$ by $n+1$ in \eqref{Eq-iteratintegral-mtss-cb-2}, multiplying by $\frac{\binn t_n(2_p)}{4^n(2n+1)^{m+1}}$ and summing up, we obtain
\begin{align*}
\int_0^1 \left(\frac{2tdt}{1-t^2}\right)^k \frac{F_{2p+1,m+1}(t)dt}{\sqrt{1-t^2}}=2^{k} \sum_{n=1}^\infty \frac{t_{n+1}^\star(1_k)t_n(2_p)}{(2n+1)^{m+2}}.
\end{align*}
The theorem follows immediately if we set $k=0$.
\end{proof}

The following result can be found at the end of the proof of \cite[Thm.~2.8]{Chavan-KL2021}. We can now prove it
as an corollary of Thm.~\ref{thm-MtVsm222}.
\begin{cor}
For any $p\in\N_0$ we have
\begin{align*}
t(3,2_p)
=\frac{1}{(2p+1)!}\int_0^1  \frac{(\arcsin t)^{2p+1} \arccos t\, dt}{t}
\end{align*}
\end{cor}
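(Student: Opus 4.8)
The plan is to specialize Theorem~\ref{thm-MtVsm222} to the case $m=1$ and then unwind the resulting iterated integral by a single application of Fubini's theorem. Setting $m=1$ in Theorem~\ref{thm-MtVsm222} gives immediately
\begin{equation*}
t(3,2_p)=\int_0^1 \left(\frac{dt}{\sqrt{1-t^2}}\right)^{2p+1}\frac{dt}{t}\frac{dt}{\sqrt{1-t^2}}.
\end{equation*}
Next I would spell out this iterated integral using the convention recalled at the start of Section~\ref{sec-SpecialMZSV-MtVproductII}, under which the leftmost $1$-form carries the smallest integration variable and the rightmost the largest. Writing the $2p+3$ variables as $0<t_{2p+3}<\cdots<t_2<t_1<1$, the rightmost factor $\frac{dt}{\sqrt{1-t^2}}$ attaches to $t_1$, the middle factor $\frac{dt}{t}$ to $t_2$, and the leftmost $2p+1$ copies of $\frac{dt}{\sqrt{1-t^2}}$ to $t_3,\dots,t_{2p+3}$.

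The key observation is that the innermost $2p+1$ integrations over $t_3,\dots,t_{2p+3}$, all bounded above by $t_2$ and all carrying the same $1$-form $\frac{du}{\sqrt{1-u^2}}$, collapse into a single power:
\begin{equation*}
\int_{0<t_{2p+3}<\cdots<t_3<t_2}\prod_{i=3}^{2p+3}\frac{dt_i}{\sqrt{1-t_i^2}}=\frac{(\arcsin t_2)^{2p+1}}{(2p+1)!},
\end{equation*}
since $\arcsin s=\int_0^s du/\sqrt{1-u^2}$. This reduces the whole expression to the two-dimensional integral
\begin{equation*}
t(3,2_p)=\frac{1}{(2p+1)!}\int_{0<t_2<t_1<1}\frac{(\arcsin t_2)^{2p+1}}{t_2}\cdot\frac{1}{\sqrt{1-t_1^2}}\,dt_1\,dt_2.
\end{equation*}

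Finally I would apply Fubini's theorem to interchange the order of integration, making $t_2$ the outer variable and integrating $t_1$ over $(t_2,1)$. Using $\int_{t_2}^1 dt_1/\sqrt{1-t_1^2}=\arcsin 1-\arcsin t_2=\arccos t_2$, and then renaming $t_2$ to $t$, yields the claimed identity. The only point requiring care is the bookkeeping of the iterated-integral convention---specifically, that the single factor $\frac{dt}{t}$ separates the $2p+1$ inner copies of $\frac{dt}{\sqrt{1-t^2}}$ from the one outer copy, so that the inner block produces the weight $(\arcsin t)^{2p+1}$ while the outer factor produces the $\arccos t$ weight. Convergence at the endpoints is harmless, since $(\arcsin t)^{2p+1}/t\sim t^{2p}$ as $t\to 0^+$ and $\arccos t\to 0$ as $t\to 1^-$, so that all integrals appearing are absolutely convergent and the interchange is justified.
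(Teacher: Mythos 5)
Your proposal is correct and follows essentially the same route as the paper: both specialize Thm.~\ref{thm-MtVsm222} to $m=1$, collapse the $2p+1$ identical inner forms into $(\arcsin t)^{2p+1}/(2p+1)!$, and reduce to the two-dimensional integral $\frac{1}{(2p+1)!}\int_0^1\int_0^t \frac{(\arcsin x)^{2p+1}}{x}\,dx\,\frac{dt}{\sqrt{1-t^2}}$. The only (cosmetic) difference is the finishing move: you swap the order of integration to produce $\int_t^1 dw/\sqrt{1-w^2}=\arccos t$ directly, whereas the paper integrates by parts and invokes $\arccos t=\pi/2-\arcsin t$; the two manipulations are equivalent.
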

\begin{proof} Note that
\begin{equation*}
\int_0^t \left(\frac{dx}{\sqrt{1-x^2}}\right)^{2p+1} =\frac{(\arcsin t)^{2p+1}}{(2p+1)!}.
\end{equation*}
Taking $m=1$ in the Thm.~\ref{thm-MtVsm222} we get
\begin{align*}
t(3,2_p)=\frac{1}{(2p+1)!}\int_0^1  \int_0^t \frac{(\arcsin x)^{2p+1} dx}{x}  \frac{dt}{\sqrt{1-t^2}}.
\end{align*}
The corollary follows easily from integration by parts once and the fact that $ \arccos t=\pi/2- \arcsin t$.
\end{proof}

\begin{thm} \label{thm-anInvSqare}
For integers $p,k,m\in\N_0$ with $m\geq 3$, we have
\begin{align}
\sum_{n\ge 0}  \bigg[\frac{4^n}{\binn}\bigg]^2 \frac{t_n^\star(1_k)\ze_n(2_{p})}{(2n+1)^{m}}\in \CMZV_{m+2p+k}^4.
\end{align}
\end{thm}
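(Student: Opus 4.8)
The plan is to mirror the proofs of Thm.~\ref{thm-1stPower} and Thm.~\ref{thm-2ndPower}, representing each of the two factors $4^n/\binn$ by an integral transform that introduces no factor of $\pi$, so that the final value is a genuine real level-$4$ CMZV (and not $\tfrac1\pi$ times one). For the factor carrying $t_n^\star(1_k)$ I would use the log-beta representation \eqref{Eq-iteratintegral-mtss-cb-2}, which contributes one factor $4^n/(n\binn)$ together with a $1/n$. For the factor carrying $\ze_n(2_p)$ with the odd denominator I would introduce the generating functions
\[
\Phi^{(p)}_j(x):=\sum_{n\ge 0}\frac{4^n}{\binn}\,\frac{\ze_n(2_p)}{(2n+1)^j}\,x^{2n+1},
\]
whose base case is $\Phi^{(0)}_1(x)=\arcsin(x)/\sqrt{1-x^2}=\sum_{n}\tfrac{4^n}{(2n+1)\binn}x^{2n+1}$, already exhibiting the crucial pairing ``$4^n/\binn$ with an odd denominator''. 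Absorbing the stray $1/n$ via $\tfrac{n}{(2n+1)^m}=\tfrac12\big((2n+1)^{-(m-1)}-(2n+1)^{-m}\big)$, the target series becomes a fixed $\Q$-combination of Cauchy products of $\Phi^{(p)}_{m-1},\Phi^{(p)}_{m}$ against the $t_n^\star(1_k)$-integral, i.e.\ a single iterated integral over $(0,1)$.

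The heart of the argument is to realize $\Phi^{(p)}_1$ (and hence $\Phi^{(p)}_j$ for $j\ge2$, obtained by applying $\int_0^x\tfrac{dt}{t}$ repeatedly) as an honest iterated integral in the forms $\tfrac{dt}{\sqrt{1-t^2}}$, $\tfrac{dt}{t}$ and $\tfrac{t\,dt}{1-t^2}$. The subtlety is that no ready-made $\arcsin$-expansion gives this combination: an \emph{even} number of $\tfrac{dt}{\sqrt{1-t^2}}$'s yields $4^n/\binn$ but with \emph{even} denominators and $\ze_{n-1}(2_{p-1})$ (this is $F_{2p,m}$), whereas an \emph{odd} number yields odd denominators but with $\binn/4^n$ and the $t$-sums $t_n(2_{p-1})$. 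I would therefore build $\Phi^{(p)}_1$ as a hybrid, using the closed form $\Phi^{(p)}_1(x)=x\int_0^1 \Li_{2_p}\!\big(x^2(1-s^2)\big)\big/\big(1-x^2(1-s^2)\big)\,ds$: one un-integrated weight $1/\sqrt{1-t^2}$ fixes the parity so that $4^n/\binn$ is paired with an odd denominator, while the $\ze_n(2_p)$-structure is generated by $p$ interleaved blocks $\tfrac{dt}{t}\,\tfrac{t\,dt}{1-t^2}$, exactly as $\Li_{2_p}$ is; the passage to a one-variable iterated integral is then effected by the same summation mechanism as in \eqref{FIIt1}, where summing the $t^{2n}$-kernels against generating-function coefficients replaces $t^{2n}$ by the relevant generating function. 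The hypothesis $m\ge 3$ enters precisely here: since $[4^n/\binn]^2\sim\pi n$ while $\ze_n(2_p)$ is bounded and $t_n^\star(1_k)\ll\log^k n$, the summand is of size $n^{1-m}\log^k n$, so $m\ge 3$ is exactly the condition for absolute convergence and for $j=m-1\ge2$ (so at least one $\int\tfrac{dt}{t}$ is applied and every iterated integral converges at $t=0$).

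Once the sum is a single iterated integral $\int_0^x(\cdots)$, I would apply the substitution $t\mapsto\tfrac{1-t^2}{1+t^2}$ together with the transformation rules \eqref{equ:changeVar1}--\eqref{equ:changeVar4}, rewriting every $1$-form in terms of the level-$4$ forms $\ta,\tx_{\pm1},\tx_{\pm i}$ (with $\tfrac{dt}{\sqrt{1-t^2}}\mapsto i(\tx_{-i}-\tx_{i})$ and $\tfrac{dt}{t\sqrt{1-t^2}}\mapsto \tx_{-1}-\tx_{1}$). The two surviving bare $\arcsin$-forms (one from $\Phi^{(p)}$, one from the $t_n^\star(1_k)$-kernel) contribute $i^2=-1$, so the two factors of $i$ cancel and the value is real, exactly as in the even-denominator analogue \eqref{Eq-mtss-1-mhs-2-cb}. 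I would then regularize the resulting integral by Lemma~\ref{lem:RegularizationLemma} with $N=4$ and a suitable $\tau(\eps)=O(\sqrt\eps)$, take the constant term by letting $x=1-\eps\to1^-$ and discarding the error with Lemma~\ref{lem:Limit}, and conclude by induction on $r=\dep(\bfk)$ (as in Thm.~\ref{thm-1stPower}) that the value lies in $\CMZV^4_{2p+k+m}$, the weight being read off as the total number of $1$-forms, namely $2p+k+m$.

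The step I expect to be the main obstacle is the hybrid iterated-integral representation of $\Phi^{(p)}_1$: one must produce the \emph{non-star, ordinary} harmonic sum $\ze_n(2_p)$ paired with $4^n/\binn$ and an \emph{odd} denominator, a combination that does not come from any single $\arcsin$-power and hence forces the interleaving of the extra $1/\sqrt{1-t^2}$ weight with the $\Li_{2_p}$-forms, followed by verifying that the auxiliary $s$-integral collapses to a genuine one-variable iterated integral. Everything downstream---the change of variables, the cancellation of $i$, and the regularization---then proceeds in parallel with the already-established Thm.~\ref{thm-1stPower} and Thm.~\ref{thm-2ndPower}.
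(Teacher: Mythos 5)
Your skeleton is essentially the paper's: the paper proves this theorem by first reducing to sums with $t_{n+1}^\star$ via $t_n^\star(1_k)=t_{n+1}^\star(1_k)-t_{n+1}^\star(1_{k-1})/(2n+1)$ (Thm.~\ref{thm-anInvSq2}), then representing the $t^\star$-factor by the beta-type integral \eqref{Eq-iteratintegral-mtss-cb-2}, realizing the factor $\frac{4^n}{\binn}\frac{\ze_n(2_p)}{(2n+1)^j}$ by an iterated-integral generating function, merging the two by the shuffle product into a single iterated integral over $(0,1)$, and finishing with $t\mapsto\frac{1-t^2}{1+t^2}$ and the level-$4$ forms. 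But your proposal has a genuine gap, and it sits exactly at the step you yourself flag as the main obstacle: you never produce an iterated-integral representation of $\Phi^{(p)}_1$. The closed form $\Phi^{(p)}_1(x)=x\int_0^1 \Li_{2_p}\big(x^2(1-s^2)\big)/\big(1-x^2(1-s^2)\big)\,ds$ is a correct identity, but it is an honest two-variable integral; the ``summation mechanism of \eqref{FIIt1}'' only lets you insert a generating function $\sum_n c_n t^{2n}$ \emph{inside} an iterated integral, and gives no means whatsoever of collapsing the auxiliary $s$-integration into a one-variable iterated integral. No argument for that collapse is offered, and without it nothing downstream (shuffle, change of variables, regularization) can even begin.

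The missing keystone is supplied in the paper by a one-line observation you overlooked: differentiate Berndt's expansion \eqref{Arcsin-Apery2} of the \emph{even} arcsine power term by term. Replacing $p$ by $p+1$ in \eqref{Arcsin-Apery2} and differentiating, one gets
\begin{equation*}
\frac{(\arcsin x)^{2p+1}}{(2p+1)!\,\sqrt{1-x^2}}
=\sum_{n\ge p}\frac{4^{n-p}\,\ze_n(2_p)}{(2n+1)\binn}\,x^{2n+1}
=4^{-p}\,\Phi^{(p)}_1(x),
\end{equation*}
so the pairing you claimed ``does not come from any single arcsin-power'' does come from one: the even power, after a single differentiation. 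Iterating ``integrate, then divide by $x$'' then gives exactly \eqref{Formula-zeta-even-x}, i.e.\ $\Phi^{(p)}_j(x)=4^p\int_0^x\big(\frac{dt}{\sqrt{1-t^2}}\big)^{2p+1}\frac{dt}{t\sqrt{1-t^2}}\big(\frac{dt}{t}\big)^{j-2}$ for $j\ge2$, and the rest of your plan goes through. A secondary flaw you should also repair: with the beta integral taken at index $n$, your Cauchy pairing requires the generating function $\sum_n c_n t^{2n-1}=t^{-2}\Phi^{(p)}_j(t)$, so your final integrand carries the $1$-form $\frac{dt}{t^2\sqrt{1-t^2}}$, which is not among the forms handled by \eqref{equ:changeVar1}--\eqref{equ:changeVar4}. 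The paper avoids this parity mismatch by shifting the beta integral to index $n+1$ and invoking $\binom{2n+2}{n+1}^{-1}\frac1{n+1}=\frac12\binn^{-1}\frac1{2n+1}$ (this is also why it first reduces the theorem to $t_{n+1}^\star$-sums). With these two repairs your argument becomes, in substance, the paper's proof.
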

\begin{proof}
For all $k\ge 1$ we have
\begin{align*}
t_n^\star(1_k)= t_{n+1}^\star(1_k)-\frac{t_{n+1}^\star(1_{k-1})}{2n+1}.
\end{align*}
Thus Thm.~\ref{thm-anInvSqare} follows immediately from the next theorem.
\end{proof}

\begin{thm} \label{thm-anInvSq2}
For integers $s\geq 0$ and $p\geq 1$, we have
\begin{align*}
\sum_{n\ge 0}  \bigg[\frac{4^n}{\binn}\bigg]^2 \frac{t_{n+1}^\star(1_k)\ze_n(2_{p-1})}{(2n+1)^{s+3}}\in \CMZV_{s+2p+k+1}^4.
\end{align*}
\end{thm}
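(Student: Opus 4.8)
The plan is to mimic the proof of Thm.~\ref{thm-2ndPower}, but with the \emph{odd-indexed} generating functions, so that the odd denominators $(2n+1)^{s+3}$ and the shifted sums $t_{n+1}^\star(1_k)$, $\ze_n(2_{p-1})$ are produced; the two factors of $4^n/\binn$ will come from two separate integral representations, which are fused by a shuffle into one iterated integral and then carried to level $4$ by $t\mapsto(1-t^2)/(1+t^2)$. First I would record a shifted form of \eqref{Eq-iteratintegral-mtss-cb-2}: replacing $n$ by $n+1$ there and using $\binom{2n+2}{n+1}=\tfrac{2(2n+1)}{n+1}\binn$ gives, for every $k\in\N_0$,
\begin{equation*}
2^k\,\frac{4^n}{(2n+1)\binn}\,t_{n+1}^\star(1_k)=\int_0^1\Big(\frac{2t\,dt}{1-t^2}\Big)^{k}\frac{t^{2n+1}\,dt}{\sqrt{1-t^2}},
\end{equation*}
which carries one factor $4^n/\binn$, the whole of $t_{n+1}^\star(1_k)$, and a single power of $2n+1$.

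Next I would construct the companion generating function for $\ze_n(2_{p-1})$. Differentiating \eqref{Arcsin-Apery2} gives
\begin{equation*}
\sum_{n\ge p-1}\frac{4^{n}\,\ze_n(2_{p-1})}{(2n+1)\binn}\,u^{2n+1}=\frac{4^{p-1}(\arcsin u)^{2p-1}}{(2p-1)!\sqrt{1-u^2}},
\end{equation*}
and applying $\int_0^u(\,\cdot\,)\tfrac{dw}{w}$ a total of $s+1$ times replaces $(2n+1)^{-1}$ by $(2n+1)^{-(s+2)}$. This expresses
\begin{equation*}
H(u):=\sum_{n\ge p-1}\frac{4^{n}\,\ze_n(2_{p-1})}{(2n+1)^{s+2}\binn}\,u^{2n+1}=4^{p-1}\int_0^u\Big(\frac{dt}{\sqrt{1-t^2}}\Big)^{2p-1}\frac{dt}{t\sqrt{1-t^2}}\Big(\frac{dt}{t}\Big)^{s}
\end{equation*}
as an iterated integral; the crucial point is that the first copy of $dw/w$ fuses with the $1/\sqrt{1-u^2}$ produced by the differentiation to create the single one-form $dt/(t\sqrt{1-t^2})$, the remaining $s$ copies staying equal to $dt/t$.

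Multiplying the shifted identity by $4^{n}\ze_n(2_{p-1})/((2n+1)^{s+2}\binn)$ and summing over $n\ge0$ collapses the double sum into the single convergent integral
\begin{equation*}
2^{k}\sum_{n\ge0}\Big[\frac{4^n}{\binn}\Big]^2\frac{t_{n+1}^\star(1_k)\,\ze_n(2_{p-1})}{(2n+1)^{s+3}}=\int_0^1\Big(\frac{2t\,dt}{1-t^2}\Big)^{k}\frac{H(t)}{\sqrt{1-t^2}}\,dt,
\end{equation*}
convergence following from $4^n/\binn\sim\sqrt{\pi n}$ and $s\ge0$. Since $(\tfrac{2t\,dt}{1-t^2})^{k}$ and $H(t)$ are both iterated integrals from $0$ to $t$, I would combine them by the shuffle product $\sha$ and append the outer one-form $dt/\sqrt{1-t^2}$, obtaining a single iterated integral whose length is $k+(2p-1)+1+s+1=2p+k+s+1$.

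Finally I would apply $t\mapsto(1-t^2)/(1+t^2)$ together with the dictionary \eqref{equ:changeVar1}--\eqref{equ:changeVar4}, under which $\tfrac{dt}{\sqrt{1-t^2}}\to i(\tx_{-i}-\tx_i)$, $\tfrac{dt}{t\sqrt{1-t^2}}\to\tx_{-1}-\tx_1$, $\tfrac{dt}{t}\to\ty$ and $\tfrac{2t\,dt}{1-t^2}\to2(\tx_0-\tx_i-\tx_{-i})$; the $2p$ occurrences of $\tfrac{dt}{\sqrt{1-t^2}}$ form an even number, so the accompanying power $i^{2p}=(-1)^p$ is real, matching the $\CMZV^4$ (rather than $i\CMZV^4$) target. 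Regularizing the boundary behaviour at the transformed endpoints by Lemma~\ref{lem:RegularizationLemma} with $N=4$ and $\tau(\eps)=\sqrt{\eps/(2-\eps)}$ (so that $\gl=\tfrac12$ and $A=-\tfrac12\log2\in\CMZV^4_1$), and then passing to the limit with Lemma~\ref{lem:Limit}, would identify the whole expression as a $\Q$-linear combination of level-$4$ CMZVs of weight $2p+k+s+1=s+2p+k+1$, which is exactly the assertion. The main obstacle is this last step: after the shuffle, the forms $\ty$ and $\tx_{-1}-\tx_1$ are singular at $t=1$, so individual shuffle terms diverge and the membership has to be extracted through the regularization machinery precisely as in Thm.~\ref{thm-1stPower}, while checking that every regularized coefficient keeps the stated weight.
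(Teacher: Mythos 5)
Your proposal reproduces the paper's own proof essentially step for step: the shifted form of \eqref{Eq-iteratintegral-mtss-cb-2}, your $H(u)$ (which is exactly the paper's \eqref{Formula-zeta-even-x}, obtained by differentiating \eqref{Arcsin-Apery2} and repeatedly integrating against $dt/t$), the multiply-and-sum collapse to a single integral, the shuffle merge, and the substitution $t\mapsto(1-t^2)/(1+t^2)$ with the dictionary \eqref{equ:changeVar1}--\eqref{equ:changeVar4} are precisely the paper's computation, with the same weight count $k+2p+s+1$. The only place you deviate is the final caveat, and there your diagnosis is wrong in a harmless direction: no regularization is needed, because after the substitution every word in the expansion is already convergent --- the innermost one-form always comes from the appended $\frac{dt}{\sqrt{1-t^2}}\to i(\tx_{-i}-\tx_{i})$, hence is never $\ta$, and the outermost comes from $\frac{2t\,dt}{1-t^2}\to 2(\tx_0-\tx_{i}-\tx_{-i})$ or from $\frac{dt}{\sqrt{1-t^2}}$, hence is never $\tx_{1}$, while the singular letters $\ty$ and $\tx_{-1}-\tx_{1}$ occur only in interior positions --- so the paper simply reads off membership in $\CMZV^4_{s+2p+k+1}$ directly, and in particular Lemma \ref{lem:Limit} is idle in your argument since there is no parameter $x\to 1^-$ left to pass to the limit (Lemma \ref{lem:RegularizationLemma} would apply only in its trivial $\ell=0$ case).
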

\begin{proof}
Differentiating \eqref{Arcsin-Apery2} and then dividing by $x$  we get
\begin{align*}
\sum_{n=p-1}^\infty  \frac{4^{n+1-p}\ze_n(2_{p-1})}{(2n+1)\binn} x^{2n}
=\frac{1}{x\sqrt{1-x^2}}\frac{(\arcsin x)^{2p-1}}{(2p-1)!}
=\frac{1}{x\sqrt{1-x^2}} \int_0^x \Big(\frac{dt}{\sqrt{1-t^2}}\Big)^{2p-1}.
\end{align*}
Repeatedly integrating and then dividing by $x$ exactly $s$ times leads to
\begin{align}\label{Formula-zeta-even-x}
\sum_{n\ge 0}  \frac{4^{n+1-p}\ze_n(2_{p-1})}{(2n+1)^{s+2}\binn} x^{2n}
=\frac{1}{x}\int_0^x  \Big(\frac{dt}{\sqrt{1-t^2}}\Big)^{2p-1}\frac{dt}{t\sqrt{1-t^2}}  \Big(\frac{dt}{t} \Big)^s
\end{align}
where we have used the convention that $\ze_n(2_{p-1})=0$ if $n<p-1$. Since
 \begin{align*}
\binom{2n+2}{n+1}^{-1} \frac1{n+1}=\frac{1}{2}\binn^{-1}\frac1{2n+1}
\end{align*}
we see that
\begin{align*}
 &\ \sum_{n\ge 0}  \bigg[\frac{4^n}{\binn}\bigg]^2 \frac{t_{n+1}^\star(1_k)\ze_n(2_{p-1})}{(2n+1)^{s+3}}
=\sum_{n\ge 0}  \frac{4^n}{\binn} \frac{\ze_n(2_{p-1})}{(2n+1)^{s+2}}\cdot \frac12\frac{4^{n+1}}{(n+1)\binom{2n+2}{n+1}}t_{n+1}^\star(1_k)\\
 &= \sum_{n\ge 0}  \frac{4^n}{\binn} \frac{\ze_n(2_{p-1})}{(2n+1)^{s+2}} \int_0^1 \left(\int_0^x\Big(\frac{2tdt}{1-t^2}\Big)^k\right) \frac{x^{2n+1}}{\sqrt{1-x^2}} dx \qquad \qquad \qquad \quad (\text{by \eqref{Eq-iteratintegral-mtss-cb-2}})\\
 & =4^{p-1} \int_0^1 \left(\int_0^x\Big(\frac{2tdt}{1-t^2}\Big)^k\right)  \left(\int_0^x  \Big(\frac{dt}{\sqrt{1-t^2}}\Big)^{2p-1} \frac{dt}{t\sqrt{1-t^2}}\Big(\frac{dt}{t}\Big)^s \right)  \frac{dx}{\sqrt{1-x^2}} \quad (\text{by \eqref{Formula-zeta-even-x}})\\
 & =4^{p-1} \int_0^1 \left[\Big(\frac{2tdt}{1-t^2}\Big)^k \sha \left(\Big(\frac{dt}{\sqrt{1-t^2}}\Big)^{2p-1} \frac{dt}{t\sqrt{1-t^2}}\Big(\frac{dt}{t}\Big)^s \right)\right]\frac{dt}{\sqrt{1-t^2}}
\end{align*}
By the change of variables $t\to \frac{1-t^2}{1+t^2}$ using \eqref{equ:changeVar1}-\eqref{equ:changeVar3} and \eqref{equ:changeVar4}
we see easily that this iterated integral lies in $\CMZV_{s+2p+k+1}^4$.
This completes the proof of the theorem.
\end{proof}

\section{Concluding remarks and questions}
In this paper, we have proved there are a few variant families of Ap\'ery type series such that each one can be expressed in terms of the real and imaginary part of CMZVs of the same weight and level (divided by $\pi$ sometimes). In general, however, as manifested by Example \ref{eg:mixedWt} not every Ap\'ery type series has this property.
Hence, we would like to conclude our paper with the following questions.

\begin{qu} Let $a_n= \frac1{4^n}{\binn} $. Is it true that
for all $m\in\N$, $p\in\N_{\ge2}$, $q\in\N_{\ge3}$, and all compositions of positive integers $\bfk$ and $\bfl$ (including the cases $\bfk=\emptyset$ or $\bfl=\emptyset$),
\begin{alignat*}{4}
{\rm (i)}& \ \ \sum_{n=1}^\infty a_n \frac{\ze_n(\bfk)t_n(\bfl)}{n^m}  \in \CMZV_{|\bfk|+|\bfl|+m}^4,& \quad
{\rm (ii)}& \ \  \sum_{n=0}^\infty  a_n^2 \frac{\ze_n(\bfk)t_n(\bfl)}{n^m}  \in  \frac{1}\pi \CMZV^{4}_{|\bfk|+|\bfl|+m+1},\\
{\rm (iii)}& \ \ \sum_{n=1}^\infty a_n^{-1} \frac{\ze_n(\bfk)t_n(\bfl)}{n^{p}}  \in \CMZV_{|\bfk|+|\bfl|+p}^4,& \quad
{\rm (iv)}& \ \ \sum_{n=1}^\infty a_n^{-2} \frac{\ze_n(\bfk)t_n(\bfl)}{n^{q}}  \in \CMZV^{4}_{|\bfk|+|\bfl|+q},\\
{\rm (v)}& \ \ \sum_{n=0}^\infty  a_n \frac{\ze_n(\bfk)t_n(\bfl)}{(2n+1)^{m}} \in  i\CMZV^{4}_{|\bfk|+|\bfl|+m},& \quad
{\rm (vi)}& \ \  \sum_{n=0}^\infty  a_n^2  \frac{\ze_n(\bfk)t_n(\bfl)}{(2n+1)^{m}} \in \frac{i}\pi \CMZV^{4}_{|\bfk|+|\bfl|+m+1},\\
{\rm (vii)}& \ \ \sum_{n=0}^\infty  a_n^{-1} \frac{\ze_n(\bfk)t_n(\bfl)}{(2n+1)^{p}}  \in  i\CMZV^{4}_{|\bfk|+|\bfl|+p},& \quad
{\rm (viii)}& \ \ \sum_{n=0}^\infty  a_n^{-2} \frac{\ze_n(\bfk)t_n(\bfl)}{(2n+1)^{q}} \in \CMZV^{4}_{|\bfk|+|\bfl|+q}?
\end{alignat*}
Can any of these be improved to $\CMZV^2$?
\end{qu}
In our recent papers \cite{XuZhao2022a,XuZhao2022b} we use a completely different approach to answer (i)-(vii) affirmatively.
Note that Thm.~\ref{thm-1stPower} (resp. Thm.~\ref{thm-2ndPower})
provides the affirmative answer for some special cases of (iii) and (v) (resp. (iv) and (vi)).
Moreover, Thm.~\ref{thm-anInvSqare} confirm (viii) in some special cases.
Furthermore, when $\bfl=\emptyset$ (i) and (ii) follow from \cite[Thm. 4.1]{Au2020} and \cite[Thm. 4.14]{Au2020}, respectively.

To conclude, we remark that many results in this paper can be regarded as special cases of the above eight families of Ap\'ery type series
even though they use the star version, because we have the well-known relations \eqref{equ:starNonStar}
and \eqref{equ:starNonStar2} between the star and non-star versions. These two relations hold for the multiple harmonic sums $\ze_n$, too.

\medskip

\noindent{\bf Acknowledgement.}  The first author is supported by the National Natural Science Foundation of China (Grant No. 12101008), the Natural Science Foundation of Anhui Province (Grant No. 2108085QA01) and the University Natural Science Research Project of Anhui Province (Grant No. KJ2020A0057). Jianqiang Zhao is supported by the Jacobs Prize from The Bishop's School.

\medskip

\noindent{\bf Disclosure statement.} The authors report there are no competing interests to declare.

\end{document}